\newtheorem{thm}{Theorem}[section]
\newtheorem{cor}{Corollary}[section]
\newtheorem{property}{Property}[section]
\newtheorem{assumption}{Assumption}[section]
\DeclareMathOperator*{\argmin}{arg\,min}
\newcommand{\Ec}{\mathcal{E}}
\newcommand{\Nc}{\mathcal{N}}
\newcommand{\Ex}{\mathbb{E}}
\newcommand{\Rb}{\mathbb{R}}
\newcommand{\Pb}{\mathbb{P}}
\newcommand{\Id}{\mathbf{I}}
\newcommand{\bU}{\boldsymbol{U}}
\newcommand{\bW}{\boldsymbol{W}}
\newcommand{\bV}{\boldsymbol{V}}
\newcommand{\bX}{\boldsymbol{X}}
\newcommand{\bZ}{\boldsymbol{Z}}
\newcommand{\bPi}{\boldsymbol{\Pi}}
\newcommand{\bPip}{\bPi^\prime}
\newcommand{\bXp}{\bX^\prime}
\newcommand{\bWp}{\bW^\prime}
\newcommand{\bZp}{\bZ^\prime}
\newcommand{\bHp}{\bH^\prime}
\newcommand{\bhH}{\widehat{\bH}}
\newcommand{\btheta}{\boldsymbol{\theta}} 
\newcommand{\bz}{\boldsymbol{z}}
\newcommand{\bx}{\boldsymbol{x}} 
\newcommand{\bv}{\boldsymbol{v}}
\newcommand{\bu}{\boldsymbol{u}} 
\newcommand{\bw}{\boldsymbol{w}} 
\newcommand{\bhv}{\widehat{\bv}}
\newcommand{\bhu}{\widehat{\bu}} 
\newcommand{\bhy}{\widehat{\by}} 
\newcommand{\bhup}{\bhu^\prime} 
\newcommand{\bhvp}{\bhv^\prime} 
\newcommand{\bhyp}{\bhy^\prime} 
\newcommand{\bbeta}{\boldsymbol{\beta}}
\newcommand{\bhbeta}{\widehat{\bbeta}} 
\newcommand{\bvarepsilon}{\boldsymbol{\varepsilon}} 
\newcommand{\bpi}{\boldsymbol{\pi}} 
\newcommand{\enoise}{\emph{noise}}
\newcommand{\bSp}{\bS^\prime}
\newcommand{\bVp}{{\bV^\prime}}
\newcommand{\bUp}{{\bU^\prime}}
\newcommand{\rp}{r^\prime}
\newcommand{\hsp}{{\hs}^{\prime}}
\newcommand{\bzeta}{\boldsymbol{\zeta}}
\newcommand{\be}{\boldsymbol{e}}
\newcommand{\basis}{\boldsymbol{e}}
\newcommand{\by}{\boldsymbol{y}}
\newcommand{\bY}{\boldsymbol{Y}}
\newcommand{\bQ}{\boldsymbol{Q}}
\newcommand{\bH}{\boldsymbol{H}}
\newcommand{\bA}{\boldsymbol{A}}
\newcommand{\bB}{\boldsymbol{B}}
\newcommand{\bD}{\boldsymbol{D}}
\newcommand{\bP}{\boldsymbol{P}}
\newcommand{\bI}{\boldsymbol{I}}
\newcommand{\bS}{\boldsymbol{S}}
\newcommand{\bzero}{\boldsymbol{0}}
\newcommand{\bone}{\boldsymbol{1}}
\newcommand{\bSigma}{\boldsymbol{\Sigma}}
\newcommand{\btZ}{\widetilde{\bZ}}
\newcommand{\btZp}{{\btZ^\prime}}
\newcommand{\hs}{\widehat{s}}
\newcommand{\hy}{\widehat{y}}
\newcommand{\hrho}{\widehat{\rho}}
\newcommand{\hrhop}{\hrho^{\prime}}
\newcommand{\cE}{\mathcal{E}}
\newcommand{\Reals}{\mathbb{R}}
\newcommand{\Prob}[1]{\mathbb{P}\left( #1 \right) }
\newcommand{\bhU}{\widehat{\bU}}
\newcommand{\bhS}{\widehat{\bS}}
\newcommand{\bhV}{\widehat{\bV}}
\newcommand{\test}{\text{test}}
\newcommand{\etest}{\emph{test}}
\newcommand{\diag}{\text{diag}}
\newcommand{\epost}{\emph{post}}
\newcommand{\htau}{\widehat{\tau}}
\newcommand{\Dc}{\mathcal{D}}
\newcommand{\Var}{\mathbb{V}\text{ar}}
\newcommand{\btW}{\tilde{\bW}}
\newcommand{\tO}{\tilde{O}}
\newcommand{\distas}[1]{\mathbin{\overset{#1}{\kern\z@\sim}}}%
\newsavebox{\mybox}\newsavebox{\mysim}
\newcommand{\distras}[1]{%
  \savebox{\mybox}{\hbox{\kern3pt$\scriptstyle#1$\kern3pt}}%
  \savebox{\mysim}{\hbox{$\sim$}}%
  \mathbin{\overset{#1}{\kern\z@\resizebox{\wd\mybox}{\ht\mysim}{$\sim$}}}%
}
\newcommand{\btbeta}{\tilde{\bbeta}}
\newcommand{\bEta}{\boldsymbol{\eta}}
\newcommand{\trunc}{\text{trunc}}
\newcommand{\etrunc}{\emph{trunc}}
\newcommand{\snr}{{\mathsf {snr}}}
\renewcommand{\tO}{\widetilde{O}}
\newcommand{\NA}{\mathsf{NA}} 
\newcommand{\bbhtheta}{\boldsymbol{\widehat{\theta}}}
\newcommand{\ba}{\boldsymbol{a}}
\newcommand{\bg}{\boldsymbol{g}}
\begin{document}

\title{On Model Identification and Out-of-Sample Prediction of PCR:  Applications to Synthetic Controls}

\author{\name Anish Agarwal \email aa5194@columbia.edu \\
       \addr Department of Industrial Engineering \& Operations Research\\
       Columbia University
       \AND
       \name Devavrat Shah \email devavrat@mit.edu \\
       \addr Department of Electrical Engineering \& Computer Science\\
       Massachusetts Institute of Technology 
       \AND
       \name Dennis Shen \email dennis.shen@marshall.usc.edu \\
       \addr Department of Data Sciences \& Operations \\
       University of Southern California}

\editor{}

\maketitle

\begin{abstract}%   <- trailing '%' for backward compatibility of .sty file
We analyze principal component regression (PCR) in a high-dimensional error-in-variables setting with fixed design.
Under suitable conditions, we show that PCR consistently identifies the unique model with minimum $\ell_2$-norm. 
These results enable us to establish non-asymptotic out-of-sample prediction guarantees that improve upon the best known rates. 
In the course of our analysis, we introduce a natural linear algebraic condition between the in- and out-of-sample covariates, which allows us to avoid distributional assumptions for out-of-sample predictions. 
Our simulations illustrate the importance of this condition for generalization, even under covariate shifts. 
Accordingly, we construct a hypothesis test to check when this condition holds in practice. 
As a byproduct, our results also lead to novel results for the synthetic controls literature, a leading approach for policy evaluation. 
To the best of our knowledge, our prediction guarantees for the fixed design setting have been elusive in both the high-dimensional error-in-variables and synthetic controls literatures. 

\end{abstract}

\begin{keywords}
 	error-in-variables, 
	fixed design, 
	high-dimensional, 
	covariate shift, 
	missing data
\end{keywords}

\section{Introduction} \label{sec:intro}
%
%\vspace{-3mm}
%
We consider error-in-variables regression in a high-dimensional setting with fixed design. 
Formally, we observe a labeled dataset of size $n$, denoted as $\{(y_i, \bz_i): i \le n \}$.
Here, $y_i \in \Reals$ is the response variable and $\bz_i \in \Rb^p$ is the observed covariate. 
For any $i \geq 1$, we posit that 
\begin{align}\label{eq:response_vector}  
	y_i & = \langle \bx_i, \bbeta^* \rangle + \varepsilon_i,
\end{align}
where $\bbeta^* \in \mathbb{R}^{p}$ is the unknown model parameter, $\bx_i \in \Rb^p$ is a fixed  covariate, and $\varepsilon_i \in \Reals$ is the response noise. 
Unlike traditional settings where $\bz_i = \bx_i$, the error-in-variables (EiV) setting reveals a corrupted version of the covariate $\bx_i$. 
Precisely, for any $i \geq 1$, let 
\begin{align}\label{eq:noisy_covariate}  
	\bz_i & = (\bx_i + \bw_i) \circ \bpi_i,
\end{align}
where $\bw_i \in \Rb^p$ is the covariate measurement noise, $\bpi_i \in \{1, \NA\}^p$ is a binary mask with $\NA$ denoting a missing value, and $\circ$ is the Hadamard product.
Further, we consider a high-dimensional setting where $n$ and $p$ are growing with $n$ possibly smaller than $p$. 

%\smallskip 
We analyze the classical method of principal component regression (PCR) within this framework. 
PCR is a two-stage process: 
first, PCR ``de-noises'' the observed in-sample covariate matrix $\bZ = [\bz^T_i] \in \Reals^{n \times p}$ via principal component analysis (PCA), i.e., PCR replaces $\bZ$ by its low-rank approximation. 
Then, PCR regresses $\by = [y_i] \in \Reals^n$ on the low-rank approximation to produce the model estimate $\bhbeta$.  
This focus of this work is to answer the following questions about PCR:
%
%\begin{center}
%{\em 
%	{\bf Q1:} ``In the case of $p > n$, is there a model that PCR consistently identifies?'' 
%	\\ \vspace{2mm}
%	{\bf Q2:} ``Given deterministic, corrupted, and partially observed out-of-sample covariates, \\ can PCR recover the expected responses?''  
%} 
%\end{center}
%
\begin{tcolorbox}[colback=gray!10!white,colframe=black!75!black]
	\begin{center}
	{
		{\bf Q1:} ``When $p > n$, is there a model parameter that PCR consistently identifies?''
		\\ 
		{\bf Q2:} ``Given deterministic, corrupted, and partially observed out-of-sample covariates, can PCR recover the expected responses?''
	} 
	\end{center} 
\end{tcolorbox}

%
%given noisy and partially observed out-of-sample (test) covariates, can PCR accurately predict the expected response variables, i.e., under what conditions does PCR generalize? 

% CONTRIBUTIONS
\subsection{Contributions}\label{ssec:contributions}
{\em Model identification.} 
Regarding Q1, we prove that PCR consistently identifies the  projection of the model parameter onto the linear space generated by the underlying covariates.
This corresponds to the unique minimum $\ell_2$-norm model, which is arguably sufficient for valid statistical inference \citep{shao_deng}. 

\smallskip
\noindent 
{\em Out-of-sample prediction.} 
For Q2, we leverage our results for Q1 to establish non-asymptotic out-of-sample prediction guarantees that improve upon the best known rates. 
Notably, these results are novel for the fixed design setting.
In the course of our analysis, we introduce a natural linear algebraic condition between the in- and out-of-sample data that supplants distributional assumptions on the underlying covariates that are common in the literature. 
We construct a hypothesis test to check when this condition holds in practice. 
We also illustrate the importance of this condition through extensive simulations. 

\smallskip
\noindent
{\em Applications to synthetic controls.} 
Our responses to Q1--Q2 lead to novel results for the synthetic controls literature, a popular framework for policy evaluation \citep{abadie1, abadie2}. 
In particular, our results provide theoretical guarantees for several PCR based methods, namely \cite{rsc, mrsc}. 
To the best of our knowledge, we provide the first counterfactual $\ell_2$-prediction guarantees for the entire counterfactual trajectory in a fixed design setting for the synthetic controls literature. 
We apply our hypothesis test to two widely analyzed studies in the synthetic controls literature.

% organization
\subsection{Organization}
Section~\ref{sec:alg} details the PCR algorithm. 
Section~\ref{sec:setup_eiv_regression} describes our problem setup and assumptions. 
Section~\ref{sec:results} provides formal statistical guarantees on Q1--Q2. 
Section~\ref{sec:simulations} reports on simulation studies. 
Section~\ref{sec:hypo} presents a hypothesis test to check when a key assumption that enables PCR to generalize holds in practice. 
Section~\ref{sec:synthetic_controls} contextualizes our findings within the synthetic controls framework. 
Section~\ref{sec:discussion} discusses related works from the error-in-variables, PCR, and functional PCA/PCR literatures. 
Section~\ref{sec:conclusion} offers directions for future research. 
We relegate all mathematical proofs to the Appendix. 

% NOTATION
\subsection{Notation} 
For a matrix $\bA \in \Rb^{a \times b}$, we denote its operator (spectral), Frobenius, and max element-wise norms as $\|\bA\|_{2}$, $\|\bA\|_F$, and $\|\bA\|_{\max}$, respectively. 
By $\text{rowspan}(\bA)$, we denote the
subspace of $\Rb^b$ spanned by the rows of $\bA$. 
Let $\bA^\dagger$ denote the pseudoinverse of $\bA$. 
For a vector $\bv \in \Rb^a$, let $\|\bv\|_p$ denote its 
$\ell_p$-norm.
We define the sub-gaussian (Orlicz) norm as $\|\bv\|_{\psi_2}$. 
Let $\langle \cdot, \cdot \rangle$ and $\otimes$ denote the inner and outer products, respectively.
For any two numbers $a, b \in\Rb$, we use $a \wedge b$ to denote $\min(a, b)$ and $a \vee b$ to denote $\max(a, b)$. 
Let $[a] = \{1, \dots, a\}$ for any positive integer $a$.

Let $f$ and $g$ be two functions defined on the same space. We say that $f(n)$ = $O(g(n))$ if and only if
there exists a positive real number $M$ and a real number $n_0$ such that for all $n \ge n_0, |f (n)| \le M|g(n)|$.
Analogously we say
$f (n) = \Theta(g(n))$ if and only if there exists positive real numbers $m, M$ such that for all $n \ge n_0, \ m|g(n)| \le |f(n)| \le M|g(n)|$;
$f (n) = o(g(n))$ if for any $m > 0$, there exists $n_0$ such that for all 
$n \ge n_0, |f(n)| \le m|g(n)|$;
$f (n) = \omega(g(n))$ if for any $m > 0$, there exists $n_0$ such that for all 
$n \ge n_0, |f(n)| \ge m|g(n)|$.
$\tO(\cdot)$ is defined analogously to $O(\cdot)$, but ignores $\log$ dependencies.

\section{Principal Component Regression} \label{sec:alg}
%\subsection{Parameter Estimation}\label{sec:alg_param_est}
%\subsection{Out-of-sample Prediction}\label{sec:alg_test_error}

\subsection{Observations} 
As described in Section \ref{sec:intro}, our in-sample (train) data consists of $n$ {\em labeled} observations $\{(y_i, \bz_i): i \le n\}$. 
By contrast, our out-of-sample (test) data consists of $m \ge 1$ {\em unlabeled} observations. 
That is, for $i > n$, we observe the covariates $\bz_i$ but do not observe the associated response variables $y_i$. 
Let $\bZ = [\bz_i^T: i \le n] \in \Rb^{n \times p}$ and 
$\bZp = [\bz_i^T: i > n] \in \Rb^{m \times p}$ denote the matrices of in- and out-of-sample covariates, respectively. 

\subsection{Description of Algorithm} \label{sec:pcr_description} 
We describe PCR, as introduced in \cite{pcr_jolliffe}, with a variation to handle missing data.  

\medskip
\noindent 
{\em I: Model identification.} 
Let $\hrho$ denote the fraction of observed entries in $\bZ$.
Replace all missing values ($\NA$) in the covariate matrices with zero. 
Let $\btZ = (1/\hrho) \bZ = \sum_{i=1}^{n \wedge p} \hs_i \bhu_i \otimes \bhv_i$,
where $\hs_i \in \Rb$ are the singular values and $\bhu_i \in \Rb^n, \bhv_i \in \Rb^p$ are the left and right singular vectors, respectively.  
For a hyperparameter $k \in [n \wedge p]$, 
let $\btZ^k = \sum_{i=1}^k \hs_i \bhu_i \otimes \bhv_i$ 
and define the estimated model parameter as 
\begin{align} \label{eq:PCR_model.threshold}
    \bhbeta 
    &= ( \btZ^k )^\dagger \by
    = \Big( \sum_{i=1}^k (1/\hs_i) \bhv_i \otimes \bhu_i \Big) \by. 
\end{align}

\noindent
{\em II: Out-of-sample prediction.} 
Let $\hrhop$ denote the proportion of observed entries in $\bZp$. 
Let $\btZp = (1/\hrhop) \bZp = \sum_{i=1}^{m \wedge p} \hsp_i \bhup_i \otimes {\bhvp}_i{\,}$, 
where $\hsp_i \in \Rb$ are the singular values and $\bhup_i \in \Rb^m, \bhvp_i \in \Rb^p$ are the left and right singular vectors, respectively.  
Given algorithmic parameter $\ell \in [m \wedge p]$, let $\btZ^{\prime \ell} = \sum_{i=1}^{\ell} \hsp_i \bhup_i \otimes {\bhvp_i}{\,}$,
and define the test response estimates as $\bhy^\prime = \btZ^{\prime \ell} \bhbeta$. 

If the expected responses are known to belong to a bounded interval, say $[-b, b]$ for some $b > 0$, then the entries of $\bhy^\prime$ are truncated as follows:
for every $i > n$, 
\begin{align}\label{eq:response.truncate}
\hy^\trunc_i & = \begin{cases}
			-b & \mbox{~if~}\hy_i < -b, \\
			\hy_i & \mbox{~if~} -b \le \hy_i \le b, \\
			b & \mbox{~if~} \hy_i > b.
	       \end{cases}
\end{align}

% Properties 
\subsection{Additional Useful Properties of PCR} 
We state a few useful properties of PCR that we use extensively. 
These are well-known results that are discussed in Chapter 17 of \cite{sroman} and Chapter 6.3 of \cite{strang}.

\begin{property}\label{property:equiv}
The PCR solution, $\bhbeta$, as given in \eqref{eq:PCR_model.threshold}, is  
\begin{itemize}
\item[1.] 
the unique solution to the following program: 
\begin{align}
	\mbox{minimize} & \quad  \norm{\bbeta}_2  \quad \mbox{over}\quad \bbeta \in \Rb^p \nonumber \\
	\mbox{such~that}~& ~\bbeta \in \argmin_{\bbeta^\prime \in \Rb^p} 
	~\| \by - \btZ^k \bbeta^\prime \|_2^2. 
\end{align} 

\item[2.] embedded within the $\emph{rowspan}(\btZ^k)$.
\end{itemize}
\end{property}

\subsection{Applying PCR in Practice} 

\subsubsection{Imputing Missing Covariate Values} \label{sec:zero_imputation} 
As shown in \cite{robust_pcr, pcr_jasa}, PCR can equivalently be interpreted as first applying the matrix completion algorithm, hard singular value thresholding (HSVT), on $\btZ$ to obtain $\btZ^k$, and then performing OLS with this de-noised output matrix. 
Accordingly, this work utilizes the simple imputation method of replacing $\NA$ values with zero to enable HSVT. 
We justify this imputation approach as follows: by setting $\NA$ values to zero, it follows that $\Ex[Z_{ij}] = \rho X_{ij} + (1 - \rho) 0 = \rho X_{ij}$; recalling $\tilde{Z}_{ij} = (1 / \hrho) Z_{ij}$, we then obtain $\Ex[\tilde{Z}_{ij}] = X_{ij}$.
Indeed, constructing $\btZ$ such that $\Ex[\btZ] = \bX$ is a crucial step that enables the HSVT subroutine of PCR to produce a good estimate of $\bX$ through $\btZ^k$. 

Naturally, there are other matrix completion methods such as nearest neighbors or alternative least squares  that do not first impute missing values. 
As long as the approach taken yields a sufficiently good estimator, cf. Lemma~\ref{lem:thm1.2} of Appendix~\ref{sec:proof_thm_4.1}, our main results on model parameter identification and generalization would naturally extend to these settings. 

\subsubsection{Choosing the Number of Principal Components} \label{sec:choose_k} 
The ideal number of principal components $k$ is rarely known a priori. 
As such, the problem of choosing $k$ has become a well-studied problem in the low-rank matrix completion literature and there exists a suite of principled methods.
These include visual inspections of the plotted singular values \citep{cattell}, cross-validation \citep{wold1978, owen2009}, 
Bayesian methods \citep{hoff}, and ``universal'' thresholding schemes that preserve singular values above a precomputed threshold \citep{Gavish_2014, usvt}. 

A common argument for these approaches is rooted in the underlying assumption that the smallest non-zero singular value of the ``signal'' $\bX$ is well-separated from the largest singular value of the ``noise'' $\bW$.
Under reasonable ``signal-to-noise'' ($\snr$) scenarios, Weyl's inequality implies that a sharp threshold or gap should exist between the top $r$ singular values and remaining singular values of the observed data $\btZ$. 
This gives rise to a natural ``elbow'' point, shown in Figure~\ref{fig:elbow}, and suggests choosing a threshold within this gap. 
As such, a researcher can simply plot the singular values of $\btZ$ and look for the elbow structure to decide if PCR is suitable for the application at hand.
We formalize a notion of $\snr$ in \eqref{eq:snr} and establish our results in the following section with respect to this quantity.
%
%{\color{red} Further, we show empirically in Section \ref{sec:simulations} that if we pick $ k > r$, the PCR estimate degrades gracefully.}

\begin{figure} 
     \centering
     \includegraphics[width=0.40\linewidth]
     {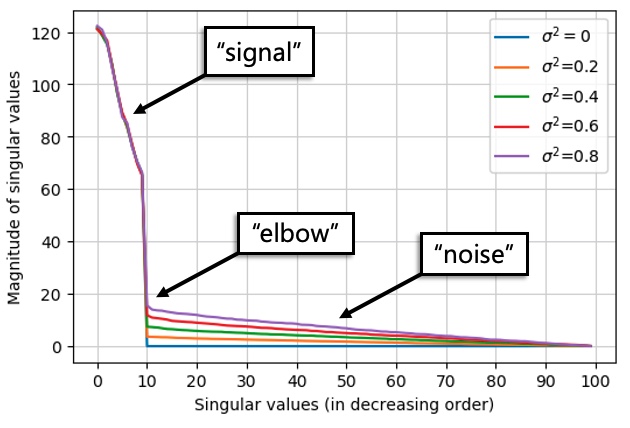}
     \caption{
     {\smaller
     Spectrum of $\bZ = \bX + \bW \in \Rb^{100 \times 100}$. 
     Here, $\bX = \bU \bV^T$, where entries of $\bU, \bV \in \Rb^{100 \times 10}$ are sampled independently from $\mathcal{N}(0,1)$;  
     entries of $\bW$ are sampled independently from $\mathcal{N}(0, \sigma^2)$ with $\sigma^2 \in \{0, 0.2, \dots, 0.8\}$. 
     We see a steep drop-off in magnitude in the singular values across all noise levels---this marks the ``elbow'' point. 
     Top singular values of $\bZ$ correspond closely with those of $\bX$ ($\sigma^2=0$). The remaining singular values are induced by $\bW$.
     Thus, the ``effective rank'' of $\bZ$ is the rank of $\bX$.
     }
     }
     \label{fig:elbow}
 \end{figure}

\section{Problem Setup} \label{sec:setup_eiv_regression}
This section formalizes our problem setup. % and operating assumptions. 
Let $\bX = [\bx_i^T: i \le n] \in \Rb^{n \times p}$ and $\bXp = [\bx_i^T: i > n] \in \Rb^{m \times p}$ represent the underlying in- and out-of-sample covariates, respectively.

% MODEL
\subsection{Assumptions}  
Collectively, we assume \eqref{eq:response_vector} and \eqref{eq:noisy_covariate} are satisfied. 
We make the additional assumptions. 

% response noise 
\begin{assumption}[Response noise]\label{assumption:response_noise} 
Let $\{\bvarepsilon_i: i \le n \}$ be a sequence of independent mean zero subgaussian random variables with $\norm{\bvarepsilon_i}_{\psi_2} \le \sigma$.
\end{assumption} 
Assumption~\ref{assumption:response_noise} is a standard assumption in the regression literature that posits the idiosyncratic response noise to be independent draws from a subgaussian distribution. 
%

% covariate noise 
\begin{assumption} [Covariate noise and missing values] \label{assumption:cov_noise} 
Let $\{\bw_i: i \le n+m\}$ be a sequence of independent mean zero subgaussian random vectors with $\norm{\bw_i}_{\psi_2} \le K$ and $\| \Ex[\bw_i \otimes \bw_i] \|_{2} \le \gamma^2$.
Let $\bpi_i \in \{1, \NA \}^p$, where $\NA$ denotes a missing value, be a vector of independent Bernoulli variables with parameter $\rho \in (0,1]$. 
Further, let $\bvarepsilon_i$, $\bw_i$, $\bpi_i$ be mutually independent. 
\end{assumption} 
Consistent with standard assumptions in the error-in-variables (EiV) regression literature, Assumption~\ref{assumption:cov_noise} posits the idiosyncratic EiV vector-valued noise $\bw_i$ to be subgaussian and independent across measurements; note, however, that the noise is allowed to be dependent within a measurement, i.e., the coordinates of $\bw_i$ can be correlated.
Finally, we require missing entries in the observed covariate vector to be missing completely at random (MCAR). 
In Section \ref{sec:het_missigness}, we discuss ways to allow for more heterogeneous missingness patterns. 

% property: bounded
\begin{assumption}[Bounded covariates] \label{assumption:bounded}  %[Boundedness] 
Let $\norm{\bX}_{\max} \le 1$ and $\norm{\bXp}_{\max}  \le 1$.
\end{assumption}
Assumption~\ref{assumption:bounded} bounds the magnitude of the underlying noiseless covariates, not the {\em observed} noisy covariates. 
This assumption is made to simplify our analysis and it can be generalized to hold for any $C$ that is an absolute constant.
Our theoretical results will correspondingly only change by an absolute constant as well.

\section{Main Results} \label{sec:results} 
This section responds to Q1--Q2. 
For ease of notation, let $C, c >0$ be absolute constants whose values may change from line to line or even within a line. 
Let $\bH = \bX^\dagger \bX \in \Rb^{p \times p}$ and $ \bH_\perp = \bI - \bH$ denote the projection matrices onto the rowspace and nullspace of $\bX$, respectively. 
Let $\bHp, \bHp_\perp \in \Rb^{p \times p}$ be defined analogously with respect to $\bXp$. 
We define $\btbeta^* = \bH \bbeta^*$ as the projection of $\bbeta^*$ onto the linear space spanned by the rows of $\bX$. 

%
%****************************
% PARAMETER-ESTIMATION ERROR
%****************************
\subsection{Model Identification} \label{sec:param_est_results}
\vspace{2mm}
\begin{tcolorbox}[colback=gray!10!white,colframe=black!75!black]
	\begin{center}
	{
		{\bf Q1:} ``When $p > n$, is there a model parameter that PCR consistently identifies?''
	} 
	\end{center} 
\end{tcolorbox}

The model parameter $\bbeta^*$ is not identifiable in the high-dimensional regime as infinitely many solutions satisfy \eqref{eq:response_vector}. 
Among all feasible parameters, we show that PCR recovers $\btbeta^*$, the unique parameter with minimum $\ell_2$-norm that is entirely embedded in the rowspace of $\bX$, provided the number of principal components $k$ is aptly chosen.

From Property \ref{property:equiv}, recall that PCR enforces $\bhbeta \in \text{rowspan}(\btZ^k)$.
Hence, if $k=r$ and the rowspace of $\btZ^r$ is ``close'' to the rowspace of $\bX$, then $\bhbeta \approx \btbeta^*$. 
The ``noise'' in $\bZ$ arises from the missingness pattern induced by $\bpi$ and the measurement error $\bW$; meanwhile, the ``signal'' in $\bZ$ arises from $\bX$, where its strength is captured by the magnitude of its singular values. 
Accordingly, we define the $\snr$ as 
\begin{align}\label{eq:snr}
\snr & \coloneqq \frac{\rho s_r}{\sqrt{n} + \sqrt{p}}. 
\end{align}
Here, $s_r$ is the smallest nonzero singular value of $\bX$, $\rho$ determines the fraction of observed entries, 
and $\sqrt{n} + \sqrt{p}$ is induced by the perturbation in the singular values from $\bW$.
As one would expect, the signal strength $s_r$ scales linearly with $\rho$. 
From standard concentration results for sub-gaussian matrices, it follows that $\| \bW \|_2 = \tO( \sqrt{n} + \sqrt{p})$ (see Lemma~\ref{lem.perturb.1}).
With this notation, we state the main result on model identification. 

\begin{thm} \label{thm:param_est}
Let Assumptions \ref{assumption:response_noise}--\ref{assumption:bounded} hold. 
Consider 
(i) PCR with $k = r = \rank(\bX)$, 
(ii) $\rho \geq c (np)^{-1}\log^2 (np)$, 
and
(iii) $\snr \geq C(K+1)(\gamma+1)$.  
Then w.p. at least $1-O((np)^{-10})$, 
\begin{align} 
    \| \bhbeta - \btbeta^* \|_2^2 
    &\le 
    C_{\enoise} \log(np) 
    \cdot 
    \left\{
       	    \frac{\| \btbeta^* \|_2^2}{\snr^2}  
            + \frac{\sqrt{n} \| \btbeta^* \|_1}{(n \vee p) \snr^2} 
            + \frac{r (1 \vee \| \btbeta^* \|_1^2)}{(n \vee p) \snr^2}
       	   + \frac{\| \btbeta^* \|_1^2}{\snr^4}
    \right\},  
    \label{eq:thm1.main}
\end{align} 
where $C_\enoise = C (K+1)^4 (\gamma + 1)^2 (\sigma^2 + 1)$. 
Further, if $\langle \bx_i,\bbeta^* \rangle \in [-d, d]$ for all $i \le n$, then 
\begin{align}\label{eq:bound_linear_coeff}
\|\btbeta^*\|_2 \le s_r^{-1} \cdot d \sqrt{n}, 
\quad
\|\btbeta^*\|_1 \le s_r^{-1} \cdot d \sqrt{np}. 
\end{align}
\end{thm} 

%{\color{orange}
%[Dennis:] Check this:  
%\begin{align}
%    \| \bhbeta - \btbeta^* \|_2^2 &\le C_{\noise} \log(np) 
%    \left\{
%        \snr^{-2} \left( 
%            \| \btbeta^* \|_2^2  
%            + \frac{\sqrt{n}}{n+p} \| \btbeta^* \|_1
%            + \frac{r (1 \vee \| \btbeta^* \|_1^2)}{n+p}
%        \right) 
%        + 
%        \snr^{-4} \| \btbeta^* \|_1^2
%    \right\}.
%\end{align}
%%
%Clearly, in high-dimensional setting, $n/p < 1$. 
%%
%Additionally, if $\bX$ is perfectly observed, then 
%%
%\begin{align}
%    \frac{1}{s_1^2} \| \bvarepsilon \|_2^2
%    \le 
%    \| \bhbeta - \btbeta^* \|_2^2 
%    \le 
%    \frac{1}{s_r^2} \| \bvarepsilon \|_2^2. \label{eq:param_est_no_eiv}
%\end{align}
%%
%Under Assumptions~\ref{assumption:response_noise} and \ref{assumption:balanced_spectra}, \eqref{eq:param_est_no_eiv} is on the order of $r/p$. 
%}

%
\noindent 
{\em Interpretation.} 
We make a few remarks on Theorem~\ref{thm:param_est}. 
First, condition (iii) is not necessary but we impose it to simplify the parameter estimation bound in \eqref{eq:thm1.main}. 
Please refer to \eqref{eq:snr.simplify} in Appendix~\ref{sec:proof_thm_4.1} for details. 
We now briefly discuss why the $\ell_1$-norm of $\btbeta^*$ shows up in the bound.
Our analysis of the parameter estimation error involves an EiV error term of the form $\| (\bX - \btZ^k) \btbeta^*\|_2$, which can be bounded as follows: 
\begin{align}
	\| (\bX - \btZ^k) \btbeta^*\|_2 \le \| \bX - \btZ^k \|_{2,\infty} \|\btbeta^*\|_1. \label{eq:eiv_mixed_l1}
\end{align} 
See \eqref{eq:thm1.2.5} in Appendix~\ref{sec:proof_thm_4.1} for details. 
%
%Note that \eqref{eq:bound_linear_coeff} suggests that $\|\btbeta^*\|_1 \le s_r^{-1} \cdot d \sqrt{np}$.
%
From \eqref{eq:bound_linear_coeff}, it is clear that $\|\btbeta^*\|_1$ is controlled if $s_r$ is sufficiently large.
Indeed, Assumption \ref{assumption:balanced_spectra} below is one such natural condition on $s_r$. 

To gain a better view on Theorem~\ref{thm:param_est} regarding consistency, let us suppress dependencies on $(K, \gamma, \sigma)$ for the following discussion.  
Theorem~\ref{thm:param_est} implies that a sufficient condition for consistency is given by 
\begin{align}
 \frac{\snr^2}
 {
  \log(np) \cdot \max\{\| \btbeta^* \|_2^2,
	~n^{1/2} (n \vee p)^{-1}  \| \btbeta^* \|_1,
	~r  (n \vee p)^{-1} (1 \vee \| \btbeta^* \|_1^2),
	~\| \btbeta^* \|_1\}
  } \rightarrow \infty. 
\end{align}
That is, PCR recovers $\btbeta^*$ provided $\snr$ grows sufficiently fast. 
Finally, \eqref{eq:bound_linear_coeff} implies that \eqref{eq:thm1.main} can be purely expressed through the smallest nonzero singular value of $\bX$. 

We now describe a natural setting for which we can provide an explicit bound on the $\snr$.
Towards this, we introduce the following assumption and discuss its meaning in Section~\ref{sec:eiv_lit}.

% assumption 
\begin{assumption}[Balanced spectra: in-sample covariates]\label{assumption:balanced_spectra}
The $r$ nonzero singular values $s_i$ of $\bX$ satisfy $s_i = \Theta(\sqrt{np/r})$. 
\end{assumption}

% corollary 
\begin{cor}\label{cor:param_est}
Let the setup of Theorem \ref{thm:param_est} and Assumption \ref{assumption:balanced_spectra} hold. 
If $\langle \bx_i,\bbeta^* \rangle \in [-d, d]$ for all $i \le n$, then w.p. at least $1-O((np)^{-10})$, 
\begin{align}\label{eq:thm1.main.1}
    \| \bhbeta - \btbeta^*\|_2^2 & \leq 
    C_\enoise \log (np) 
    \cdot 
    \left\{
        \frac{d r^{3/2}}{\rho^2 \sqrt{n} p} + \frac{d^2 r^3}{\rho^4 (n \wedge p)^2}
    \right\}. 
\end{align}
\end{cor}

\begin{proof}
By Assumption \ref{assumption:balanced_spectra}, we have $s_r = \Theta(\sqrt{np/r})$. 
This yields
\begin{align}
    \snr & = \frac{\rho s_r}{\sqrt{n} + \sqrt{p}} 
    ~\geq \frac{c \rho \sqrt{np}}{\sqrt{r(n + p)}} 
    ~\geq c\rho \sqrt{(n \wedge p) / r}, %\frac{\sqrt{n \wedge p}}{\sqrt{r}}. 
\end{align}
i.e., $\snr = \Omega(\rho \sqrt{(n \wedge p) / r} )$. 
Further, we have from \eqref{eq:bound_linear_coeff} that 
\begin{align}
\|\btbeta^*\|_2 \le d \sqrt{r / p}, \quad \|\btbeta^*\|_1 \le d \sqrt{r}. \label{eq:well_balanced_coeff}
\end{align}
Inserting \eqref{eq:well_balanced_coeff} into \eqref{eq:thm1.main} and simplifying completes the proof.
\end{proof}
Ignoring dependencies on $(\rho, r, d)$, Corollary \ref{cor:param_est} implies that the model identification error scales as $\min\{1 / \sqrt{n}p,  1 / (n \wedge p)^2\}$. Hence, the error vanishes as $\min\{n, p\} \to \infty$.
The requirement that $p$ grows arises from the error-in-variables problem; more specifically, in the PCA subroutine, we show that $\btZ^k$ is a good estimate of $\bX$ provided both $n$ and $p$ grow (see Lemmas~\ref{eq:thm1.lem1.f2.0} and \ref{lem:thm1.2} in Appendix~\ref{sec:proof_thm_4.1} for details). 
%
%Indeed, if $p = 1$, then we cannot effectively de-noise $\bZ$.

%
\iffalse
{\color{red} 
Corollary \ref{eq:cor1.main} implies that if $n \le p$, then the squared $\ell_2$-norm of the parameter estimation error scales as $O\big(\frac{{r^2} \log p}{\rho^4\,n} \|\btbeta^* \|_2^2 \big)$.
%
Therefore, ignoring log factors on $p$ as well as dependencies on $\rho$ and $r$, the error decays as $\tO(1/n)$ which matches the best known rate (albeit,  previous results are usually with respect to a sparse $\beta^*$ while our is with respect to the minimum $\ell_2$-norm $\beta^*$).
}
\fi

%*************************
% POST-INTERVENTION ERROR
%*************************
%
\subsection{Out-of-sample Prediction} \label{sec:test_error_results} 
\vspace{2mm}
\begin{tcolorbox}[colback=gray!10!white,colframe=black!75!black]
	\begin{center}
	{
		{\bf Q2:} ``Given deterministic, corrupted, and partially observed out-of-sample covariates, can PCR recover the expected responses?''
	} 
	\end{center} 
\end{tcolorbox}
Towards answering Q2, we define PCR's out-of-sample (test) prediction errors with respect to $\bhy$ and $\bhy^\trunc$ as 
\begin{align}\label{eq:mse_test}
	&\text{MSE}_{\test} \coloneqq \frac{1}{m}\sum^m_{i = 1}(\hy_{n+i} - \langle \bx_{n+i}, \bbeta^*\rangle)^2
	\\
	&\text{MSE}^\trunc_{\test} \coloneqq \frac{1}{m}\sum^m_{i = 1}(\hy^\trunc_{n+i} - \langle \bx_{n+i}, \bbeta^*\rangle)^2, 
\end{align}
respectively. 
Let $s_{\ell}, s'_{\ell} \in \Rb$ be the $\ell$-th largest singular values of $\bX$ and $\bXp$, respectively.
Recall from Section \ref{sec:alg} that $\hs_\ell, \hsp_\ell$ are defined analogously for $\btZ$ and $\btZp$, respectively. 
Analogous to \eqref{eq:snr}, we define a signal-to-noise ratio for the out-of-sample covariates as 
\begin{align}\label{eq:snr_test}
	\snr_{\test} &\coloneqq  \frac{\rho s'_{r'}}{\sqrt{m} + \sqrt{p}}.
\end{align}
Next, we bound $\text{MSE}_{\test}$ in probability and $\text{MSE}^\trunc_{\test}$ in expectation with respect to $\snr$ and $\snr_\test$.  
For ease of notation, we define $n_{\min} = n \wedge m$ and $n_{\max} = n \vee m$. 

% THEOREM 
\begin{thm} \label{thm:test_error} 
Let the setup of Theorem \ref{thm:param_est} hold with $\rho \geq c (mp)^{-1} \log^2 (mp)$.
Consider 
(i) PCR with $\ell = r' = \rank(\bXp)$
and
(ii) $\| \btbeta^* \|_1 = \Omega(1)$.  
Then w.p. at least $1- O((n_{\min} p)^{-10})$,  
\begin{align} \label{eq:test_error.1}
    \emph{MSE}_{\etest} 
    &\le \Delta_1 + \Delta_2, 
\end{align} 
where 
\begin{align}
    \Delta_1 &= 
    C \cdot p \cdot 
   \delta_\beta
    \cdot \| \bH_\perp  \bHp  \|_2^2, 
    \\ 	
    \Delta_2 
    &= C'_\enoise \log(n_{\max} p) 
    \cdot 
    \Bigg\{
    \frac{\sqrt{n}}{\snr^2}\| \btbeta^* \|_1
    + 
    \Delta_3
    \Bigg\}, 
    \\
    \Delta_3 &=     \left(
    	\frac{r(1 \vee p/m)}{\rho^2 \snr^2}
	+
	\frac{r'}{\snr^2_\etest \wedge m}
	+ 
     	\frac{ n \vee p }{\snr^4}
    \right) \| \btbeta^* \|_1^2;    
\end{align} 
here, $\delta_\beta$ is given by the righthand side of \eqref{eq:thm1.main} 
and $C'_\enoise = C (K+1)^6 (\gamma+1)^4 (\sigma^2 + 1)$. 
Further, if $\langle \bx_i,\bbeta^* \rangle \in [-b, b]$ for all $i > n$, then 
\begin{align} \label{eq:test_error.2}
\Ex [\emph{MSE}^\etrunc_{\etest}] &\le \Delta_1 + \Delta_4 + \Delta_5, 
\end{align} 
where
\begin{align}
    \Delta_4 &= C'_\enoise \log(n_{\max} p) 
    \cdot 
    \left\{
    \frac{\sqrt{n}}{\snr^2} \left( \frac{1}{\snr^2} + \frac{1 \vee p/m}{\rho^2 (n \vee p)} \right) \| \btbeta^* \|_1
    + \Delta_3 \right\},
%    \\
%    &\qquad \qquad \qquad \qquad \qquad
%    + 
%    \left(
%	\frac{r(1 \vee p/m)}{\rho^2 \snr^2}
%	+
%	\frac{r}{\snr^2_\etest \wedge m}
%	+ 
%     	\frac{ n \vee p }{\snr^4}
%    \right) \| \btbeta^* \|_1^2
%    \Bigg\}, 
    \\
    \Delta_5 &= \frac{Cb^2}{(n_{\min} p)^{10}}.
\end{align}
\end{thm} 

\noindent {\em Interpretation.} 
Let us briefly dissect Theorem~\ref{thm:test_error}. 
Firstly, condition (ii) is not necessary but made to simplify the resulting bound. 
On a more interesting note, it is well known that generalization error bounds rely on some notion of ``closeness'' between the in- and out-of-sample covariates. 
A canonical assumption within the statistical learning theory literature considers the two sets of covariates to be drawn from the same underlying distribution a la i.i.d. samples.  
As seen in \eqref{eq:test_error.1} and \eqref{eq:test_error.2}, we consider a complementary notion of covariate closeness that is captured by the term $\| \bH_\perp  \bHp  \|_2$ in $\Delta_1$. 
In words, it measures the size of the linear subspace spanned by the out-of-sample covariates that is not contained within the linear subspace spanned by the in-sample covariates. 
Effectively, this term quantifies the $\ell_2$-distance, or $\ell_2$-similarity, between the in- and out-of-sample covariates. 
If each out-of-sample covariate is some linear combination of the in-sample covariates, then this error term vanishes and the out-of-sample prediction error decreases. 
We formalize this concept in Assumption~\ref{assumption:subspace_incl} below. 

% subspace inclusion 
\begin{assumption} [Subspace inclusion] \label{assumption:subspace_incl} 
Let $\emph{rowspan}(\bXp) \subseteq \emph{rowspan}(\bX)$. 
\end{assumption} 

To aid our intuition of Assumption~\ref{assumption:subspace_incl}, consider \eqref{eq:response_vector} in the classical regime where $n > p$. 
The canonical assumption within this paradigm considers $\bX$ to have full column rank, i.e., $\rank(\bX) = p$. 
Accordingly, the in-sample covariates span $\Rb^p$ so the subspace spanned by the out-of-sample covariates necessarily lies within that spanned by the in-sample covariates, yielding $\| \bH_\perp  \bHp  \|_2 = 0$. 
In this view, Assumption~\ref{assumption:subspace_incl} generalizes the full column rank assumption in the classical regime to the collinear setting in the high-dimensional regime. 

\begin{cor} \label{cor:test_error.0}
Let the setup of Theorem \ref{thm:test_error} and Assumption \ref{assumption:subspace_incl} hold. 
Then, $\Delta_1 = 0$. 
\end{cor}

\begin{proof}
Under Assumption~\ref{assumption:subspace_incl}, we have $\| \bHp \bH_\perp \|_2^2 = 0$. 
\end{proof} 

For interpretability,  we suppress dependencies on $(K, \gamma, \sigma)$, and assume $p = \Theta(m)$ with $m \to \infty$.
One can then verify that Corollary \ref{cor:test_error.0} implies that sufficient conditions for PCR's expected test prediction error to vanish are
\begin{align}
	&\frac{\snr^2}
	{
	\log(n_{\max}p)
	\cdot
	\max\{
		n^{1/4} \| \btbeta^* \|_1^{1/2},
		~ (n \vee p)^{1/2} \| \btbeta^* \|_1
	\}
	} \rightarrow \infty, 
	\\
	&\frac{\rho^2 ~ \snr^2}
	{
	\log(n_{\max}p)
	\cdot
	\max\{
		~ n^{1/2} (1 \vee p/m) (n \vee p)^{-1} \| \btbeta^* \|_1,
		~ r (1 \vee p/m) \| \btbeta^* \|_1^2
	\}
	} \rightarrow \infty,
	\\
	 &\frac{\snr^2_{\test}}
	{
	\log(n_{\max}p) \cdot r' \| \btbeta^* \|_1^2
	} \rightarrow \infty.  
\end{align} 
As with Theorem \ref{thm:param_est}, we specialize Theorem \ref{thm:test_error} in Corollary \ref{cor:test_error} to the setting where $\snr = \Omega(\rho \sqrt{(n \wedge p)/r})$ and $\snr_\test = \Omega(\rho \sqrt{(m \wedge p)/r'})$.
A sufficient condition for the lower bound on $\snr_\test$ is provided in Assumption~\ref{assumption:balanced_spectra_test}. 

% assumption 
\begin{assumption}[Balanced spectra: out-of-sample covariates]\label{assumption:balanced_spectra_test}
The $\rp$ nonzero singular values $s'_i$ of $\bXp$ satisfy $s'_i = \Theta(\sqrt{mp / r'})$. 
\end{assumption}
%
% corollary
\begin{cor}\label{cor:test_error}
Let the setups of Corollaries \ref{cor:param_est}--\ref{cor:test_error.0} and Assumption \ref{assumption:balanced_spectra_test} hold. 
Then w.p. at least $1- O((n_{\min} p)^{-10})$,
\begin{align}
	\emph{MSE}_{\etest} 
	&\le 
	C'_\enoise \log(n_{\max} p)
	\cdot
	\left\{
		\frac{d r^{3/2} \sqrt{n}}{\rho^2 (n \wedge p)}
		+
		\Delta
	\right\},
\end{align}
where
\begin{align}
	\Delta = \frac{d^2 r^3 (1 \vee p/m)}{\rho^4(n \wedge p)}
		+
		\frac{d^2 r^2}{\rho^2 (m \wedge p)}
		+
		\frac{d^2 r (n \vee p)}{\rho^4 (n \wedge p)^2}. 
\end{align}
Further, if $\langle \bx_i,\bbeta^* \rangle \in [-b, b]$ for all $i > n$, then 
\begin{align}
\Ex [\emph{MSE}^\etrunc_{\etest}] 
&\le 
C'_\enoise \log( n_{\max} p) \cdot 
\left\{
	\frac{d r^{5/2} \sqrt{n}}{\rho^4 (n \wedge p) (n \wedge m \wedge p) }
	+ \Delta
\right\} 
+ \frac{Cb^2}{(n_{\min} p)^{10}}.
\end{align}
\end{cor}

\begin{proof}
Using identical arguments to those made in the proof of Corollary \ref{cor:param_est} and noting $r' \le r$, it follows that Assumption~\ref{assumption:balanced_spectra_test} gives $\snr_{\test} \geq c\rho \sqrt{(m \wedge p) / r}$. 
Plugging the bounds on $\snr$, $\snr_\test$, and \eqref{eq:well_balanced_coeff} into Theorem \ref{thm:test_error} completes the proof. 
\end{proof}

For the following discussion, we suppress dependencies on 
$(K, \gamma, \sigma, r)$ and log factors, assume $\rho = \Theta(1)$, 
and only consider the scaling with respect to $(n, m, p)$.
Corollary \ref{cor:test_error} implies that if $p = o(n n_{\min})$ and $n = o(p^2)$,\footnote{Practically speaking, this condition is not binding. If $n = \Omega(p^2)$, then we can sample a subset of the training data to satisfy the condition. Hence, this condition is likely an artifact of our analysis.}
then the out-of-sample prediction error vanishes to zero both in expectation and w.h.p., as $n, m, p \rightarrow \infty$. 
If we make the additional assumption that $n = \Theta(p)$ and $p = \Theta(m)$, then the error scales as $\tO(1/n)$ in expectation.
This improves upon the best known rate of $\tO(1/\sqrt{n})$, established in \cite{robust_pcr, pcr_jasa}; 
notably, these works do not provide a high probability bound. 
Additionally, \cite{robust_pcr, pcr_jasa} require i.i.d. covariates to leverage standard Rademacher tools for their out-of-sample analyses.
In contrast, we consider fixed design points, thus our generalization error bounds do not rely on distributional assumptions regarding $\bX$ and $\bXp$. 
Finding the optimal relative scalings of $(n,m,p)$ to achieve consistency remains future work. 

\subsection{Discussion} 
%
%A few remarks are in order. 

% heterogeneous missingness patterns
\subsubsection{Heterogeneous Missingness Patterns}\label{sec:het_missigness}
Assumption~\ref{assumption:cov_noise} considers MCAR patterns in the observed covariate matrix $\bZ$. 
This is motivated by the HSVT subroutine of PCR, as discussed in Section~\ref{sec:zero_imputation}. 
If the missingness pattern is instead heterogeneous, other matrix completion methods designed for such settings can be utilized to more accurately recover the underlying covariates.
Matrix completion with heterogeneous missingness patterns is an active area of research and there has been a recent emergence of exciting results, including \cite{schnabelfwang16, ma2019missing, sportisse2020estimation_PCA} and \cite{bhattacharya2021matrix} to name a few. 

At a high-level, these algorithms follow a two-step approach: 
(i) construct estimates $\hrho_{ij}$ of $\rho_{ij}$;
(ii) use $\hrho_{ij}$ and $\bZ$ to estimate $X_{ij}$. 
With regards to step (i), let $\bPi \in \{0,1\}^{n \times p}$ denote the binary mask matrix with $\Ex[\pi_{ij}] = \rho_{ij}$. 
The common assumption driving these approaches is that $\Ex[\bPi]$ is a low-rank matrix; note if $\Ex[\pi_{ij}] = \rho$ (MCAR), then $\rank(\Ex[\bPi]) = 1$. 
As such, matrix completion algorithms can be first applied to $\bPi$ to obtain the estimates $\hrho_{ij}$. 
Then, $\bX$ can be estimated using $\hrho_{ij}$ and $\bZ$.
%e.g., if one wanted to apply PCR, then the analog would be to do PCA on $\btZ$, where $\btZ_{ij} = (1 / \hrho_{ij}) \bZ_{ij}$), and then apply OLS. 
%
Within the context of this work, if the matrix completion algorithm can faithfully recover the underlying covariates, cf. Lemma~\ref{lem:thm1.2} of Appendix~\ref{sec:proof_thm_4.1}, then our main results in Section~\ref{sec:results} would naturally extend. 
A formal analysis of this more general estimator is left as interesting future work.

%Specifically, the way these algorithms work is as follows. 
%%
%Let $\bP \in \{0, 1\}^{n \times p}$ be a matrix such that $P_{ij} = 1$ if $Z_{ij}$ is observed, else $P_{ij} = 0$, if $Z_{ij}$ is missing. 
%%
%Let $\Ex[\bP_{ij}] = \rho_{ij}$, which denotes the probability that the $(i, j)$-th covariate is observed.
%%
%What is then crucially assumed in these works is that $\Ex[\bP_{ij}]$ is low-rank (note that if $\Ex[\bP_{ij}] = \rho$, it is a rank-1 matrix).
%%
%Hence, by applying matrix completion to $\bP$, it helps de-noise this matrix to get an estimate for $\widehat{\Ex}[\bP_{ij}]$, i.e, it produces an estimate for $\hat{\rho}_{ij}$.
%%
%Then rather than define $\btZ$ as $(1 / \hrho) \bZ$, as we do in the estimator that we analyze, we now define $\btZ_{ij} = (1 / \hrho_{ij}) \bZ_{ij}$. 
%%
%As long as the approach can faithfully recover the underlying noiseless covariates, cf. Lemma~\ref{lem:thm1.2} of Appendix~\ref{sec:proof_thm_4.1}, then our main results in Section~\ref{sec:results} would naturally extend. 
%%
%A formal analysis of this more general estimator is left as interesting future work.

For the specific setting where there is a different probability of missingness $\{\rho_j\}_{j \in [p]}$ for each of the $p$ covariates, we propose a straightforward extension of PCR. 
Let $\hrho_j$ be the fraction of observed entries in the $j$-th column of $\bZ$. 
Let $\widehat{\bP} \in \Rb^{p \times p}$ be a diagonal matrix with the $j$-th diagonal element given by $\hrho_j$.
After setting the $\NA$ values of $\bZ$ to zero, we now redefine $\btZ$ as $\btZ = \bZ \widehat{\bP}$.
In words, rather than uniformly re-weighting the $\bZ$ by $1 / \hrho$, we now re-weight the $j$-th column of $\bZ$ by $1 / \hrho_j$.
As a result, our theoretical results will go through in an analogous manner with the scaling now depending on $\rho_{\min} = \min_{j \in [p]} \rho_j$. 

%It is not hard to verify that our theoretical results go through in an identical manner, but we avoid that analysis for ease of exposition---one can verify that our results will now scale with $\rho_{min} = \min_{j \in [p]} \rho_j$. 

% choice of k 
\subsubsection{PCR Theory with Misspecified Number of Principal Components} 
The results of this section rely on an oracle version of PCR that has access to the true ranks of $\bX$ and $\bXp$. 
We leave a formal treatment of PCR when the number of principal components is misspecified as an important future line of inquiry. 
With that said, we remark that the universal data-driven approach of \cite{Gavish_2014}, as mentioned in Section~\ref{sec:choose_k}, often performs remarkably well in practice. 
We apply this approach in our simulation studies on PCR's generalization performance in Sections~\ref{sec:distribution_shift}--\ref{sec:sim.mcar}. 

% Lower Bound
\subsubsection{Towards a Lower Bound on Model Identification} 
To the best of our knowledge, Theorem~\ref{thm:param_est} provides the first upper bound on PCR's model parameter estimation error in the high-dimensional EiV setting with fixed design. 
In Lemma~\ref{lemma:lower_bound} of Appendix~\ref{sec:lower_bound}, we take the first step towards establishing a complementary lower bound to better understand the limitations of PCR in such a setting. 
%  
%At a high level, our result shows that PCR's parameter estimation error is bounded below by an absolute constant when $\snr = O(1)$, albeit our results 

% generalization 
\subsubsection{Viewing Generalization through Assumption~\ref{assumption:subspace_incl}} 
As discussed, our out-of-sample guarantees do {\em not} rely on any distributional assumptions between the in- and out-of-sample covariates.
Rather, our results rely on a purely linear algebraic condition given by Assumption \ref{assumption:subspace_incl}.
In this view, Assumption \ref{assumption:subspace_incl} offers a complementary, distribution-free perspective on generalization and has possible implications to learning under covariate shifts. 
We examine the role of Assumption \ref{assumption:subspace_incl} in our simulations in Section~\ref{sec:simulations}. 
As a preview, our results provide empirical evidence that PCR can generalize even when the in- and out-of-sample covariates obey different distributions provided Assumption \ref{assumption:subspace_incl} holds. 
In light of these findings, we furnish a data-driven diagnostic in Section~\ref{sec:hypo} to check when Assumption~\ref{assumption:subspace_incl} may hold in practice.

\section{Illustrative Simulations} \label{sec:simulations} 
In this section, we present illustrative simulations to support our theoretical results. 
We provide details of the simulations in Appendix~\ref{sec:simulation_details}. 

% param est
\subsection{PCR Identifies the Minimum $\ell_2$-norm Model Parameter}
\label{sec:experiment_param_exp} 
To see how Theorem~\ref{thm:param_est} plays out in practice, we design a simulation on model identification. 
%The purpose of this simulation is to show that PCR identifies the minimum $\ell_2$-norm model. 

\medskip 
\noindent 
{\em Setup.} 
We consider $p = 512$ and $r = 15$. 
We generate $\bbeta^*$ and set it to have unit norm. 
For each $n \in \{30, 98, 167, \dots, p\}$, we generate the $\bX$ and define the minimum $\ell_2$-norm solution as $\btbeta = \bX^\dagger \bX \bbeta^*$. 
We conduct $1000$ simulation repeats per sample size $n$. 
For each repeat, we sample $(\bvarepsilon, \bW)$ to construct $\by = \bX \bbeta^* + \bvarepsilon$ and $\bZ = \bX + \bW$. 

%%We consider $p \in \{128, 256, 512\}$. 
%%
%For each $p \in \{128, 256, 512\}$, we first generate $\bX$ with $r = p^{\frac 1 3}$, $\bbeta^*$, $\bvarepsilon$, and $\bW$. 
%%
%We then define $\by = \bX \bbeta^* + \bvarepsilon$, $\bZ = \bX + \bW$, and $\btbeta^*$ as the projection of $\bbeta^*$ onto $\text{rowspan}(\bX)$. 
%%

\medskip 
\noindent 
{\em Results.} 
For each simulation repeat, we apply PCR on $(\by, \bZ)$ to learn a {\em single} $\bhbeta$ with $k = r$ chosen correctly. 
Figure~\ref{fig:pcr_param_est} visualizes the root-MSE (RMSE) of $\bhbeta$ with respect to $\btbeta^*$ and $\bbeta^*$. 
As predicted by Theorem \ref{thm:param_est}, the RMSE with respect to $\btbeta^*$ decays to zero as the sample size increases. 
In contrast, the RMSE with respect to $\bbeta^*$ stays roughly constant across different sample sizes. 
This reaffirms that PCR identifies the minimum $\ell_2$-norm solution amongst all feasible solutions. 

\begin{figure}
	\centering
	\includegraphics[width=0.45\linewidth]{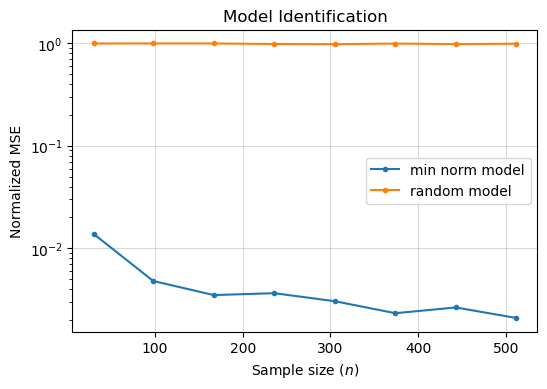}
	\caption{ \smaller
	    Plots of $\ell_2$-norm errors (log scale) for two cases: (i) $\| \bhbeta - \btbeta^* \|_2$ and (ii) $\|\bhbeta - \bbeta^* \|_2$. The error decays for case (i) but remains stagnant for case (ii).
	}
	\label{fig:pcr_param_est}
\end{figure}

% DISTRIBUTION SHIFTS
\subsection{PCR is Robust to Covariate Shifts}
\label{sec:distribution_shift} 
We study the PCR's generalization properties, as predicted by Theorem~\ref{thm:test_error}, in the presence of covariate shifts, i.e., the in- and out-of-sample covariates follow different distributions. 

\medskip 
\noindent 
{\em Setup.} 
We adopt the same considerations on $(n, p, r)$ and generate $\bbeta^*$ as in Section~\ref{sec:experiment_param_exp}.  
Let $m = n$. 
For each $n$, we generate $\bX$ as per distribution $\Dc_1$. 
We then generate four different out-of-sample covariates as follows: 
(i) $\bXp_1 \sim \Dc_1$,
(ii) $\bXp_2 \sim \Dc_2$,
(iii) $\bXp_3 \sim \Dc_3$,
and (iv) $\bXp_4 \sim \Dc_4$, where $\Dc_2, \Dc_3, \Dc_4$ are distinct distributions from one another and from $\Dc_1$. 
Critically, Assumption~\ref{assumption:subspace_incl} is satisfied between $\bX$ and $\bXp_i$ for every $i \in \{1, \dots, 4\}$. 
We define $\btheta'_i = \bXp_i \bbeta^*$. 
We conduct $1000$ simulation repeats. 
For each repeat, we sample $(\bvarepsilon, \bW, \bWp)$ to construct $\by = \bX \bbeta^* + \bvarepsilon$, $\bZ = \bX + \bW$, and $\bZp_i = \bXp_i + \bWp$. 

\medskip 
\noindent 
{\em Results.} 
For each simulation repeat, we apply PCR on $(\by, \bZ)$ to learn a single $\bhbeta$ by choosing $k$ via the universal data-driven approach of \cite{Gavish_2014}. 
For each $i$, we construct $\bhyp_i$ from the de-noised version of $\bZp_i$ and $\bhbeta$. 
Figure \ref{fig:pcr_distribution_shift} displays the MSE of $\bhyp_i$ with respect to $\btheta'_i$. 
As predicted by Corollary~\ref{cor:test_error}, the out-of-sample prediction error decays as the sample size increases for each covariate shift. 
Hence, our results provide further evidence that PCR is robust to corrupted out-of-sample covariates and, perhaps more importantly, covariate shifts provided Assumption \ref{assumption:subspace_incl} holds. 

\begin{figure}
	\centering
	\includegraphics[width=0.45\linewidth]{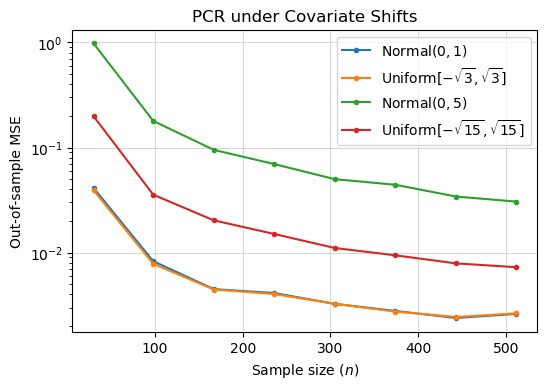}
	\caption{ \smaller
	    Plot of PCR's MSE (log scale) under various covariate shifts with Assumption \ref{assumption:subspace_incl} satisfied in each case. The MSE decays as the sample size increases for each covariate shift.
	}
	\label{fig:pcr_distribution_shift}
\end{figure}

% SUBSPACE INCLUSION VS. DISTRIBUTIONAL INVARIANCE
\subsection{PCR Generalizes under Assumption \ref{assumption:subspace_incl}} \label{sec:sim.subspace_incl}
This simulation further examines the role of Assumption \ref{assumption:subspace_incl}. 
Specifically, we compare PCR's generalization error under two settings: (i) there is covariate shift but Assumption \ref{assumption:subspace_incl} holds; (ii) there is distributional invariance (i.e., the in- and out-of-sample covariates obey the same distribution) but Assumption \ref{assumption:subspace_incl} is violated. 

\medskip 
\noindent 
{\em Setup.} 
We adopt the same considerations on $(n, m, p, r)$ and generate $\bbeta^*$ as in Section~\ref{sec:distribution_shift}. 
For each $n$, we generate $\bX \sim \Dc_1$. 
We then generate two out-of-sample covariates: 
(i) $\bXp_1 \sim \Dc_1$ that violates Assumption~\ref{assumption:subspace_incl}; 
(ii) $\bXp_2 \sim \Dc_2$ with $\Dc_2 \neq \Dc_1$ that obeys Assumption~\ref{assumption:subspace_incl}. 
Next, we define $\btheta'_i = \bXp_i \bbeta^*$ for $i \in \{1,2\}$. 
We conduct $1000$ simulation repeats. 
For each repeat, we sample $(\bvarepsilon, \bW, \bWp)$ to construct $\by = \bX \bbeta^* + \bvarepsilon$, $\bZ = \bX + \bW$, and $\bZp_i = \bXp_i + \bWp$. 

\medskip 
\noindent 
{\em Results.} 
For each simulation repeat, we apply PCR on $(\by, \bZ)$ to learn a single $\bhbeta$ by choosing $k$ via the universal data-driven approach of \cite{Gavish_2014}. 
For each $i$, we construct $\bhyp_i$ from the de-noised version of $\bZp_i$ and $\bhbeta$. 
Figure \ref{fig:subspace_v_iid} displays the MSE of $\bhyp_i$ with respect to $\btheta'_i$. 
When Assumption~\ref{assumption:subspace_incl} holds, the MSE decays as the sample size increases; by contrast, when Assumption~\ref{assumption:subspace_incl} fails, the MSE is stagnant across varying sample sizes. 
Our findings reinforce the importance of Assumption~\ref{assumption:subspace_incl} for PCR's ability generalize. 
%
%{\color{red} Add a sentence that many a times the $k$ picked by Gavish/Donoho method is not equal to $r$ - further suggesting robustness of PCR to a misspecified number of principal component.}

\begin{figure}
	\centering
	\includegraphics[width=0.45\linewidth]{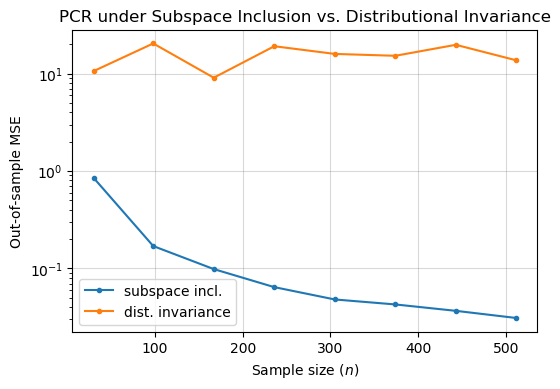}
	\caption{
	   { \smaller
	    Plots of PCR's MSE (log scale) under two cases: (i) Assumption \ref{assumption:subspace_incl} holds but distributional invariance is violated (blue); (ii) Assumption \ref{assumption:subspace_incl} is violated but distributional invariance holds (orange).
	    Case (i) achieves a vanishing MSE while case (ii) suffers from non-vanishing MSE.}
	}
	\label{fig:subspace_v_iid}
\end{figure}

% MISSING DATA
\subsection{PCR Generalizes with MCAR Entries} \label{sec:sim.mcar} 
This simulation investigates PCR's out-of-sample performance under varying intensities of MCAR patterns in the observed covariate matrices. 

\medskip 
\noindent 
{\em Setup.} 
We adopt the same considerations on $(n, m, p, r)$ and generate $\bbeta^*$ as in Section~\ref{sec:distribution_shift}. 
For each $n$, we generate $\bX, \bXp \sim \Dc_1$ with Assumption~\ref{assumption:subspace_incl} satisfied.  
Next, we define $\btheta' = \bXp \bbeta^*$. 
We consider varying intensities of MCAR entries with $\rho \in \{0.4, 0.6, 0.8, 0.99\}$.
We conduct $1000$ simulation repeats for each $(\rho, n)$ pair. 
For each repeat, we sample $(\bvarepsilon, \bW, \bWp, \bPi, \bPip)$ to construct $\by = \bX \bbeta^* + \bvarepsilon$, $\bZ = (\bX + \bW) \circ \bPi$, and $\bZp_i = (\bXp_i + \bWp) \circ \bPip$. 
Note that there are $\rho$ entries in $\bPi, \bPip$ that are randomly assigned the value $1$, and each iteration considers a different permutation of revealed entries. 

\medskip 
\noindent 
{\em Results.} 
For each simulation repeat, we apply PCR on $(\by, \bZ)$ to learn $\bhbeta$ by choosing $k$ via the universal data-driven approach of \cite{Gavish_2014}. 
We construct $\bhyp$ from the de-noised version of $\bZp$ and $\bhbeta$. 
Figure \ref{fig:mcar} displays the MSE of $\bhyp$ with respect to $\btheta'$. 
Across varying intensities of $\rho$, the MSE decays as the sample size increases, which suggests that PCR can generalize when entries in the observed covariate matrices are MCAR. 

\begin{figure}
	\centering
	\includegraphics[width=0.45\linewidth]{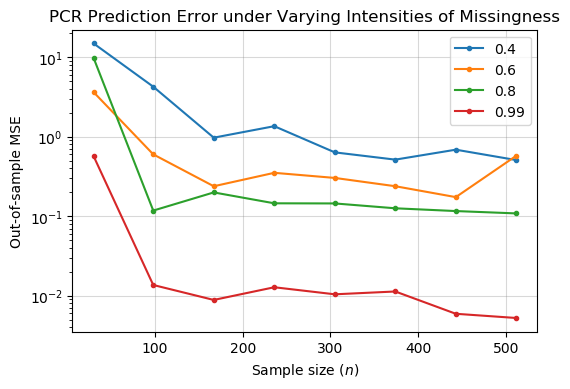}
	\caption{
	   { \smaller
	    Plots of PCR's MSE (log scale) under varying intensities of MCAR as controlled by $\rho$. The MSE decays as the sample size increases for each value of $\rho$.}
	}
	\label{fig:mcar}
\end{figure}

\section{A Hypothesis Test for Assumption \ref{assumption:subspace_incl}} \label{sec:hypo} 
Our theoretical and empirical results highlight the importance of Assumption~\ref{assumption:subspace_incl}. 
Accordingly, we present a hypothesis test to check when Assumption~\ref{assumption:subspace_incl} holds in practice. 
Recall the definitions of $(\bH, \bH_\perp)$ and $(\bHp, \bHp_\perp)$ as defined at the start of Section~\ref{sec:results}. 

We consider the hypotheses
\begin{align}
	H_0: ~ \text{rowspan}(\bXp) \subseteq \text{rowspan}(\bX)
	\quad \text{and} \quad 
	H_1: ~ \text{rowspan}(\bXp) \nsubseteq \text{rowspan}(\bX).
\end{align} 
Since $(\bX, \bXp)$ are unobserved, we use $(\bZ, \bZp)$ as proxies. 
To this end, let $\bhH^k$ and $\bhH^{\prime \ell}$ denote the projection matrices formed by the right singular vectors of $\btZ^k$ and $\btZ^{\prime \ell}$, respectively; see Section~\ref{sec:pcr_description} for a recall of relevant notation.  
We then define our test statistic as 
\begin{align}
	\htau &= \| (\bI - \bhH^k) \bhH^{\prime \ell} \|_F^2. 
\end{align}
In words, $\htau$ measures the $\ell_2$-distance between the in- and out-of-sample covariates represented by the rowspaces of $\btZ^k$ and $\btZ^{\prime \ell}$, respectively. 

We define the test as follows: for any significance level $\alpha \in (0,1)$ and corresponding critical value $\tau(\alpha)$, retain $H_0$ if $\htau \le \tau(\alpha)$ and reject $H_0$ if $\htau > \tau(\alpha)$. 
%
%\begin{align}
%	\text{retain } H_0 \text{ if } ~\htau \le \tau(\alpha)
%	\quad \text{and} \quad  \text{reject } H_0 \text{ if } ~\htau > \tau(\alpha).  
%\end{align} 
%
In Sections~\ref{sec:hypo.theory} and \ref{sec:hypo_practical} below, we discuss two approaches to perform the hypothesis test.

% THEORY 
\subsection{A Theory-Based Approach} \label{sec:hypo.theory}
We first provide a theory-based approach in defining $\tau(\alpha)$. 
Formally, let 
\begin{align}\label{eq:crit_value}
\tau(\alpha) &= 
r' \left( 
\frac{C\varsigma^2 \phi^2 (\alpha/2)}{s^2_{r}}
+ \frac{C\varsigma^2 (\phi' (\alpha/2))^2}{(s'_{r'})^2}
+ \frac{C\varsigma \phi (\alpha/2)}{s_{r}}
\right), 
\end{align}
where $C \ge 0$ is an absolute constant, 
$\text{Var}(w_{ij}) \le \varsigma^2$, 
$\phi(a) = \sqrt{n} + \sqrt{p} + \sqrt{\log(1/a)}$;
$\phi'(a) = \sqrt{m} + \sqrt{p} + \sqrt{\log(1/a)}$;  
and 
recall that $s_\ell, s'_\ell$ are the $\ell$-th largest singular values of 
$\bX$ and $\bXp$, respectively. 
See Appendix~\ref{sec:proof.hypo_test} for the derivation of \eqref{eq:crit_value}. 

% results
\subsubsection{Type I and Type II Guarantees} 
Given our choice of $\htau$ and $\tau(\alpha)$, we control both Type I and Type II 
errors of our test. 
For ease of exposition, we will consider a more restrictive form of Assumption~\ref{assumption:cov_noise}, namely the entries of the covariate noise are independent and $(\bZ, \bZp)$ are fully observed. 

\begin{thm} \label{thm:hypo}
Consider Assumption~\ref{assumption:cov_noise} with (i) the entries of $\{\bw_i: i \le n + m\}$ as independent random variables satisfying $\emph{Var}(w_{ij}) = \varsigma^2$ and (ii) $\rho = 1$. 
Suppose $k = r$ and $\ell = r'$.
Fix any $\alpha \in (0,1)$. 
Then, there exists an absolute constant $C \ge 0$, defined in \eqref{eq:crit_value}, such that 
the Type I error is bounded as 
$
\Pb(\htau > \tau(\alpha) | H_0) \le \alpha. 
$
To bound the Type II error, suppose the additional condition holds:
\begin{align} \label{eq:type2_cond} 
    r' > \| \bH \bHp \|_F^2 
    + 2 \tau(\alpha) 
    + 
    \frac{C\varsigma r' \phi'(\alpha/2)}{s'_{r'}}.
\end{align}
Then, the Type II error is bounded as 
$
\Pb(\htau \le \tau(\alpha) | H_1) \le \alpha.
$
\end{thm}
The particular $C$ for which Theorem \ref{thm:hypo} holds depends on the underlying distribution of the covariate noise $\bw_i$. 
$C$ can be made explicit for certain classes of distributions; as an example, Corollary \ref{cor:hypo} specializes Theorem \ref{thm:hypo} to when $\bw_i$ are normally distributed. 
\begin{cor}\label{cor:hypo}
Consider the setup of Theorem \ref{thm:hypo} with $C=4$.  
Let $\bw_i$ be normally distributed for all $i \le n + m$. 
Then, $\Pb(\htau > \tau(\alpha) | H_0) \le \alpha$ and  $\Pb(\htau \leq \tau(\alpha) | H_1) \le \alpha.$
\end{cor}
We now argue \eqref{eq:type2_cond} is not a restrictive condition.
Conditioned on $H_1$, observe that $ r' > \|\bH \bHp \|_F^2$ always holds. 
If Assumptions~\ref{assumption:balanced_spectra} and \ref{assumption:balanced_spectra_test} hold, then one can easily verify that the latter two terms on the right-hand side of \eqref{eq:type2_cond} decay to zero as $(n, m, p)$ grow.

% computing crit. value 
\subsubsection{Computing the Critical Value} 
Computing $\tau(\alpha)$ requires estimating (i) $\varsigma^2$;  (ii) $r, r'$; (iii) $s_{r}, s'_{r'}$. 
Under our assumptions, the covariance of $\bw$ can be estimated from the sample covariance matrices of $(\bZ, \bZp)$. 
By standard random matrix theory, the singular values of $\bZ$ and $\bX$ are close. 
Thus, as discussed in Section~\ref{sec:choose_k}, the spectrum of $\bZ$ serves as a good proxy to estimate $(r, s_r)$. 
Analogous arguments hold for $\bZp$ with respect to $\bXp$. 
Corollary~\ref{cor:critical_value} specializes $\tau(\alpha)$ under Assumptions~\ref{assumption:balanced_spectra} and \ref{assumption:balanced_spectra_test}. 
\begin{cor}\label{cor:critical_value}
Let the setup of Theorem \ref{thm:hypo} hold.
Suppose Assumptions~\ref{assumption:balanced_spectra} and \ref{assumption:balanced_spectra_test} hold.
Then, $\tau(\alpha) = O\Big( \frac{\sqrt{\log(1 / \alpha)}}{\min\{\sqrt{n}, \sqrt{m}, \sqrt{p} \}} \Big).$
\end{cor} 
If we consider the noiseless case, $\bw_i = \bzero$, then $\tau(\alpha) = 0$.
More generally, if the spectrum of $\bX$ and $\bXp$ are well-balanced, then Corollary~\ref{cor:critical_value} establishes that $\tau(\alpha) = o(1)$, even in the presence of noise. 
We remark that Corollary \ref{cor:hypo} allows for exact constants in the definition of $\tau(\alpha)$ under the Gaussian noise model.

% PRACTICAL 
\subsection{A Practical Approach} \label{sec:hypo_practical} 
We now provide a practical approach to computing $\tau(\alpha)$. 
To build intuition, observe that $\htau$ represents the remaining spectral energy of $\bHp$ not contained within $\bH$. 
Further, we note $\htau$ is trivially bounded by $r'$. 
Thus, one can fix some fraction $\alpha \in (0,1)$ and reject $H_0$ if $\htau > \tau(\alpha)$, where $\tau(\alpha) = r' \alpha$. 
In words, if more than $\alpha$ fraction of the spectral energy of $\bHp$ lies outside the span of $\bH$, then the alternative test rejects $H_0$. 
We remark that this variant is likely more robust compared to its exact computation counterpart in \eqref{eq:crit_value}, which requires estimating several ``nuisance'' quantities and varies with the underlying modeling assumptions on the covariate noise and singular values. 
Accordingly, without knowledge of these quantities, we recommend the practical approach. 
To see how the practical heuristic plays out in practice, see Section~\ref{sec:sc_studies} and \cite{broad}.

\section{Synthetic Controls} \label{sec:synthetic_controls}
This section contextualizes our results in Section~\ref{sec:results} for synthetic controls \citep{abadie1, abadie2}, which has emerged as a leading approach for policy evaluation with observational data \citep{athey_imbens}.  
Towards this, we connect synthetic controls to (high-dimensional) error-in-variables regression with fixed design. 

% SC Framework 
\subsection{Synthetic Controls Framework}
%
%\underline{\bf Panel data as error-in-variables.} 
%
Consider a panel data format where observations of $p+1$ units, indexed as $\{0, \dots, p\}$, are collected over $n+m$ time periods. 
Each unit $i$ at time $t$ is characterized by two potential outcomes, $Y_{ti}(1)$ and $Y_{ti}(0)$, corresponding to the outcomes under treatment and absence of treatment (i.e., control), respectively \citep{neyman, rubin}. 
For each unit, we observe their potential outcomes according to their treatment status, i.e., we either observe $Y_{ti}(0)$ or $Y_{ti}(1)$, never both. 
%
%To allow for stochasticity (e.g., idiosyncratic shocks), the potential outcomes are modeled as random variables. 
%
Let $Y_{ti}$ denote the observed outcome. 
For ease of exposition, we consider a single treated unit indexed by the zeroth unit and referred to as the target. 
We refer to the remaining units as the control group. 

We observe all $p+1$ units under control for the first $n$ time periods. 
In the remaining $m$ time periods, we continue to observe the control group without treatment but observe the target unit with treatment. 
Precisely, 
\begin{align}
	Y_{ti} = \begin{cases}
		Y_{ti}(0) &\text{for all } t \le n \text{ and } i \ge 0,
		\\
		Y_{ti}(0) &\text{for all } t > n \text{ and } i \ge 1,
		\\
		Y_{ti}(1) &\text{for all } t > n \text{ and } i = 0. 
	\end{cases}
\end{align}
We call the first $n$ and final $m$ time steps the pre- and post-treatment periods, respectively. 
We encode the control units' pre- and post-treatment observations into $\bZ = [Y_{ti}: t \le n, i \ge 1] \in \Rb^{n \times p}$ and $\bZp = [Y_{ti}: t > n, i \ge 1] \in \Rb^{m \times p}$, respectively. 
We encode the target unit's pretreatment observations into $\by = [Y_{t0}: t \le n] \in \Rb^n$. 
With these concepts in mind, we connect the synthetic controls framework to our setting of interest. 

\subsubsection{Out-of-Sample Prediction}
Synthetic controls tackles the counterfactual question: ``what would have happened to the target unit in the absence of treatment?''
Formally, the goal is to estimate the (expected) counterfactual vector $\Ex[\by'(0)]$, where $\by'(0) = [Y_{t0}(0): t > n] \in \Rb^m$.
Methodologically, this is answered by regressing $\by$ on $\bZ$ and applying the regression coefficients $\bhbeta$ to $\bZp$ to  
%In this pursuit, synthetic control methods use $(\by, \bZ, \bZp)$ to 
estimate the treated unit's expected potential outcomes under control during the post-treatment period.
% i.e., $\Ex[\by'(0)]$, where $\by'(0) = [Y_{t0}(0): t > n] \in \Rb^m$. 
%
From this perspective, we identify that counterfactual estimation is precisely out-of-sample prediction. 

\subsubsection{Error-in-Variables} 
As is typical in panel studies, potential outcomes are modeled as the addition of a latent factor model and a random variable in order to model measurement error and/or misspecification \citep{abadie_survey}. 
That is,
$Y_{t0}(0) = \langle \bu_t, \bv_i \rangle + \varepsilon_{t0}$,
where $\bu_t, \bv_i \in \Rb^r$ represent latent time and unit features with $r$ much smaller than $(n, m, p)$, and $\varepsilon_{t0} \in \Rb$ models the stochasticity.
This is also known as an interactive fixed effects model \citep{bai_interactive_fixed_effects}.
Put differently, the observed matrices $\bZ$ and $\bZp$ are viewed as noisy instantiations of $\bX = \Ex[\bZ]$ and $\bXp = \Ex[\bZp]$, where $\bX, \bXp$ are low-rank matrices. 
They represent the matrices of latent expected potential outcomes, which are a function of the latent time and unit factors. 
Since $\bhbeta$ is learned using $\bZ$ not $\bX$, synthetic controls is an instance of error-in-variables regression. 

\begin{remark} [Clarifying MCAR entries] 
As described in Section~\ref{sec:setup_eiv_regression}, we allow the entries in $\bZ$ and $\bZp$ to be missing completely at random (MCAR). 
We emphasize that these missing elements do {\em not} correspond to our counterfactual estimands of interest.  
Readers who find the MCAR setting to be implausible can proceed with the balanced panel data setting in mind. 
\end{remark}

\subsubsection{Linear Model} 
The underlying premise behind synthetic controls is that the target unit is a weighted composition of control units. 
In our setup, this translates more formally as the existence of a linear model $\bbeta^* \in \Rb^p$ satisfying 
\begin{align} \label{eq:sc.eq}
	\Ex[Y_{t0}(0)] = \sum_{i \ge 1} \beta_i^* \Ex[Y_{ti}(0)] \implies Y_{t0}(0) = \sum_{i \ge 1} \beta_i^* \Ex[Y_{ti}(0)] + \varepsilon_{t0} 
\end{align}  
for every $t \in [n+m]$, i.e., $\by = \bX \bbeta^* + \bvarepsilon$, where $\bvarepsilon = [\varepsilon_{t0}: t \le n]$.
%
% To justify \eqref{eq:sc.eq}, we first highlight that
% %
% Specifically, $\Ex[Y_{ti}(0)] = \langle \bu_t, \bv_i \rangle$, where $\bu_t, \bv_i \in \Rb^r$ represent latent time and unit features with $r$ much smaller than $(n, m, p)$.  
%
We note that \citep{pcr_jasa} establish that such a $\bbeta^*$ exists w.h.p. if $r$ is much smaller than $(n, m, p)$.

\subsubsection{Fixed Design} 
Several works in the literature, e.g., \cite{pcr_jasa}, enforce the latent time factors to be sampled i.i.d.
Subsequently, the pre- and post-treatment data under control are also i.i.d. 
In contrast, we consider a fixed design setting that avoids distributional assumptions on the expected potential outcomes. 
This allows us to model settings with underlying time trends or shifting ideologies, which are likely present in many panel studies. 

\subsection{Novel Guarantees for the Synthetic Controls Literature}
With our connection established, we transfer our theoretical results to the synthetic controls framework. 
In particular, we analyze the robust synthetic controls (RSC) estimator of \cite{rsc} and its extension in \cite{mrsc}, which learns $\bhbeta$ via PCR. 

\subsubsection{Model Identification} 
Intuitively, $\bbeta^*$ defines the synthetic control group. 
That is, the magnitude (and sign) of the $i$th entry specifies the contribution of the $i$th control unit in the construction of the target unit. 
Theorem~\ref{thm:param_est} establishes that RSC consistently identifies the unique synthetic control group with minimum $\ell_2$-norm. 
%
%Theorem~\ref{thm:lower_bound} then implies the near optimality of RSC in this respect, which may suggest its attractiveness amongst the many methodological variants in the literature. 

%established in \cite{robust_pcr, pcr_jasa}, which considers a random design setting where the latent time factors are sampled i.i.d.
%(notably, they also do not provide a high probability bound). 

\subsubsection{Counterfactual Estimation} 
We denote RSC's estimate of the expected counterfactual trajectory as $\widehat{\Ex}[\by'(0)] = [\widehat{\Ex}[Y_{t0}(0)]: t > n]$.
The counterfactual estimation error is then 
\begin{align}\label{eq:sc_error}
\frac{1}{m} \| \widehat{\Ex}[\by'(0)] -  \Ex[\by'(0)] \|^2_2, 
\end{align}
which precisely corresponds to \eqref{eq:mse_test}. 
Theorem~\ref{thm:test_error} immediately leads to a vanishing bound on \eqref{eq:sc_error} as $(n, m, p)$ grow. % in probability and in expectation.  
The exact finite-sample rates given in Theorem~\ref{thm:test_error} improve upon the best known rate provided in \cite{pcr_jasa}, which is only established in expectation and for random designs.  
To the best of our knowledge, Theorem~\ref{thm:test_error} is also the first guarantee for fixed designs in the synthetic controls literature. 

\subsection{Examining Assumption~\ref{assumption:subspace_incl} for Two Synthetic Controls Studies}\label{sec:sc_studies}
We revisit two canonical synthetic controls case studies: 
(i) terrorism in Basque Country \citep{abadie1} and 
(ii) California's Proposition 99 \citep{abadie2}. 
These studies have been used extensively to explain the utility of the synthetic controls method.
We apply the practical variant of our hypothesis test for Assumption~\ref{assumption:subspace_incl} in Section~\ref{sec:hypo_practical} to study the potential feasibility of counterfactual inference in both studies. 

\subsubsection{Terrorism in Basque Country} 
\noindent {\em Background \& setup.} 
%
%In 1968, the first Basque Country victim of terrorism was claimed; however, it was not until the mid-1970s did the terrorist activity become more rampant \cite{abadie1}. 
%
Our first study evaluates the economic ramifications of 
terrorism on the Basque Country of Spain.
Our data is comprised of the per-capita GDP associated with 17 Spanish regions over 43 years. 
Basque Country is the sole treated unit that is affected by terrorism; the remaining $p=16$ regions are the control regions that are relatively unaffected by terrorism. 
The pre- and post-intervention durations are $n = 14$ and $m = 29$ years, respectively. 
We note that the original work of \cite{abadie1} uses 13 additional predictor variables for each region, including demographic information pertaining to one's educational status, and average shares for six industrial sectors.
We only use information related to the outcome of interest, i.e., the per-capita GDP. 

\medskip \noindent 
{\em Hypothesis test results.} 
We consider $\alpha = 0.05$. 
We estimate $r' = 3$ via the universal data-driven approach of \cite{Gavish_2014}. 
%
%We define $r'$ as the number of singular values needed to capture at least 99\% of the spectral energy in $\bZp$, which turns out to be $r'=1$. 
%
This sets $\tau(\alpha) = 0.15$.  
Estimating $r$ analogously to $r'$, we obtain $r = 5$ and $\htau = 0.61$.  
Since $\htau > \tau(\alpha)$, our test suggests that the PCR-based method of \cite{rsc} may not be suitable for this study under our assumptions. 
In fact, our test only passes for (effectively) $\alpha > 0.21$, which roughly translates to allowing for over $21\%$ of the spectral energy of $\bHp$ to fall outside of $\bH$. 

% and provide evidence in favor of the conclusions drawn in \cite{rsc} and \cite{pcr_jasa}. 
%

\subsubsection{California Proposition 99} 
\noindent {\em Background \& setup.} 
Our second study evaluates the effect of California's Proposition 99 on the consumption of tobacco. 
Our data is comprised of annual per-capita cigarette sales at the state level for 39 U.S. states for 31 years. 
With the exception of California, the other states in this study neither adopted an anti-tobacco program or raised cigarette sales taxes by 50 cents or more.
As such, the remaining $p=38$ states are considered the control states and California is considered the treated state. 
The pre- and post-intervention durations are $n = 18$ and $m = 13$ years, respectively. 
The original work of \cite{abadie2} uses six additional covariates per state. 
We do not include these variables in our study. 

\medskip \noindent {\em Hypothesis test results.} 
We consider $\alpha = 0.05$. 
Estimating $(r, r')$ as above, we obtain $r = 4$ and $r' = 3$, which yields $\tau(\alpha) = 0.15$ and $\htau = 1.63$. 
Again, we have $\htau > \tau(\alpha)$, which suggests that PCR-based methods may be ill-suited to produce reliable counterfactual estimates under our assumptions. 
Our test, therefore, only passes for (effectively) $\alpha > 0.55$. 

\subsubsection{Discussion of Findings} 
Although our tests do not pass for either study, our results are not meant to discredit the previous conclusions drawn in \cite{rsc} and \cite{pcr_jasa}.
Rather, our tests highlight that these studies warrant further investigation. 
We hope our findings not only motivate the usage of this test, but also spark the development of new robustness tests to stress investigate the assumptions that underlie statistical methods and thus the associated causal conclusions drawn from these methods. 

%given the widespread applicability of synthetic controls based analyses, we hope our brief empirical study motivates the development of additional robustness/sensitivity analysis tests to examine the suitability of these approaches.  

\section{Related works} \label{sec:discussion} 
This section discusses related prior works from several literatures. 
%
% In Section~\ref{sec:pcr_lit}, we discuss our work in the context of the PCR literature.
% %
% In Section \ref{sec:fpca}, we compare with the functional PCA and functional PCR literature. 
% %
% In Section~\ref{sec:eiv_lit}, we present a detailed comparison with the high-dimensional error-in-variables literature.
% %
% In Section~\ref{ssec:lr.confound}, we briefly discuss
% linear regression with hidden confounders, an important topic in econometrics.

\subsection{Principal Component Regression} \label{sec:pcr_lit} 
Since its introduction in \cite{pcr_jolliffe}, there have been several notable works analyzing PCR, including \cite{pcr_tibshirani, robust_pcr, pcr_jasa, recent-survey}. 
We pay particular attention to \cite{robust_pcr, pcr_jasa} given their closeness to this article.

\subsubsection{Model Identification} 
\cite{robust_pcr, pcr_jasa} purely focuses on prediction and thus, do not provide any results for model identification. 
This work proves that PCR identifies the unique minimum $\ell_2$-norm model with non-asymptotic rates of convergence. 
%, but also provides a minimax lower bound that establishes the near optimality of PCR. 

\subsubsection{Out-of-Sample Prediction} 
\cite{robust_pcr, pcr_jasa} shows that
PCR's out-of-sample prediction error decays as $\tO(1 / \sqrt{n})$ when $m, p = \Theta(n)$. 
\cite{robust_pcr, pcr_jasa} conjecture that their ``slow'' rate is an artefact of their Rademacher complexity arguments.  
By leveraging our model identification result in Theorem~\ref{thm:param_est}, we establish the ``fast'' rate of $\tO(1 / n)$.  

\subsubsection{Framework}
\noindent {\em Learning setup.} 
\cite{robust_pcr, pcr_jasa} considers a transductive learning setting, where {\em both} the in- and out-of-sample covariates are accessible upfront. 
This work, in comparison, considers the classical supervised learning setup, where the out-of-sample covariates are not revealed during training. 

\medskip 
\noindent 
{\em Covariate design.} 
\cite{robust_pcr, pcr_jasa} considers a random design setting with i.i.d. covariates.
By contrast, we consider a fixed design setting. 
As \cite{shao_deng} notes, estimation in high-dimensional regimes with fixed designs is very different from those with random designs due to the identifiability of the model parameter. 
Additionally, since we treat the covariates as deterministic, we do {\em not} impose that the in- and out-of-sample covariates obey the same distribution. 
Under the linear algebraic condition of Assumption \ref{assumption:subspace_incl}, we prove that PCR achieves consistent out-of-sample prediction in Corollary~\ref{cor:test_error.0}. 

\subsection{Functional Principal Component Analysis}\label{sec:fpca}
We consider functional principal component analysis (fPCA), which generalizes PCA to infinite-dimensional operators \citep{Yao_Muller_Wing_Aos, Hall_Muller_Wang_Aos, Li_Hsing_Aos, descary2019functional}.
This literature often assumes access to $n$ randomly sampled trajectories at $p$ locations, which are carefully chosen from a grid with minor perturbations, forming an $n \times p$ data matrix, $\bD$. 
Thus, $\bD^T \bD$ is the empirical proxy of the underlying covariance kernel that corresponds to these random trajectories. 
Under appropriate assumptions on the trajectories, the $\bD^T \bD$ matrix can be represented as the additive sum of a low-rank matrix with a noise matrix. 
This resembles the low-rank matrix estimation problem with a key difference being that {\em all} entries here are fully observed. 
%This is very much like the setup of low-rank matrix estimation with key difference being that all the entries are observed and missingness is not present. 
In \cite{descary2019functional}, the low-rank component is estimated by  performing an explicit rank minimization, which is known to be computationally hard.  
The functional (or trajectory) approximation from this low-rank estimation
is obtained by smoothing (or interpolation)---this is where the careful choice of locations in a grid plays an important role. 
The estimation error is provided with respect to the normalized Frobenius norm (i.e., Hilbert-Schmidt norm when discretized). 
Finally, we remark that the fPCA literature has thus far considered diverging $n$ with fixed $p$ or $n \gg p$. 

% considers the setting where $n \to \infty$ for
% a given $p$ or at best $n \gg p$. 
%

In comparison, PCR utilizes hard singular value thresholding (HSVT), a popular method in the matrix estimation toolkit, to recover the low-rank matrix;  
such an approach is computationally efficient and even yields a closed form solution.
As shown in \cite{pcr_jasa}, PCR can be equivalently interpreted as HSVT followed by ordinary least squares. 
Hence, unlike the standard fPCA setup, PCR allows for missing values in the covariate matrix since HSVT recovers the underlying matrix in the presence of noisy and missing entries.
Analytically, our model identification and prediction error guarantees rely on matrix recovery bounds with respect to the $\ell_{2, \infty}$-norm, which is stronger than the Frobenius norm, i.e., $(np)^{-1/2} \| \bA \|_F \le n^{-1/2}  \| \bA \|_{2, \infty}$. 
Put differently, the typical Frobenius norm bound is insufficient to provide guarantees for PCR with error-in-variables. 
%
% As this work shows, for PCR to not only recover the minimum $\ell_2$-norm model parameter faithfully but also to obtain meaningful prediction error
% bounds, the matrix estimation error needs to be bounded with respect to a stronger norm than the normalized Frobenius norm, concretely $\| \cdot \|_{2, \infty}$ (recall that $(1/\sqrt{np}) \| \cdot \|_F \le (1/\sqrt{n})  \| \cdot \|_{2, \infty}$), i.e., the typical bound for $(1/\sqrt{np}) \| \cdot \|_F$ is insufficient
% to provide guarantees for PCR with error-in-variables. 
%
Finally, our setting allows for both $n \ll p$ and $n \gg p$; the current fPCA literature only allows for $n \gg p$.   

In this view, our work offers several directions for research within the fPCA literature: 
(i) allow the sampling locations to be different across the $n$ measurements, provided there is sufficient overlap;
(ii) consider settings beyond $n \gg p$; 
%for $n$ trajectories and $p$ observed locations per trajectory to scale simultaneously rather than a requirement of $n \gg p$; 
%
(iii) extend fPCA guarantees for computationally efficient methods like HSVT. 

%\medskip
%
There has also been work on functional principal component regression (fPCR), which allows $\bbeta^*$ to be an infinite-dimensional parameter. 
Notable works include \cite{Hall_Horowitz_Aos} and \cite{Call_Hall_Aos}, which consider the problems of model identification and prediction error, respectively.
These works, however, do {\em not} allow for error-in-variables. 
As noted above, model identification and out-of-sample guarantees at the fast rate of $\tO(1/n)$ for PCR with error-in-variables in the finite-dimensional case has remained elusive.
Extending these results for fPCR with error-in-variables remains interesting future work.
%

%
%The focus of these prior works has been on establishing asymptotic results.
%%
%In comparison, the focus of this work is a finite-sample analysis in a high-dimensional error-in-variables regression setting. 
%%
%Further, the prior works in fPCR literature typically posit that each observations is drawn independently and identically from a distribution (i.i.d.).
%%
%While our work is applicable to the i.i.d. setting, it goes beyond it by allowing for a covariate shift between the train and test data. 
%
% A closely related work is that of \cite{descary2019functional}, which also considers a matrix 
% estimation perspective in an error-in-variables framework; however, their proposed algorithm imposes 
% a rank constraint, which is computationally hard.
%

% {\color{red} FPCR ~ 2-3 sentences; natural generalization of PCR beyond linear setting; recent interesting work to establish asymptotic normality starting with these works; but involves two challenges: computationally hard estimation procedure \& lack of finite-sample analyses. naturally, a kernel variant of this work would be an interesting future work to overcome some of thes} 

\subsection{Error-in-Variables}\label{sec:eiv_lit}

There are numerous prominent works in the high-dimensional error-in-variables literature, including \cite{tsybakov_1, tsybakov_2, orthogonal_1, orthogonal_2, loh_wainwright, weighted_l1, tsybakov_3, tsybakov_4, cocolasso}.   
Below, we highlight a few key points of comparison.

\iffalse
although, in these works the aim is to estimate a sparse $\bbeta^*$ (i.e. with small $\ell_0$-norm).
%
Indeed, to achieve such a result, prior works assume that such a sparse model exists and 
that the row and columns of $\bX$ have sufficient ``information spread'', i.e., satisfy something akin to
the restricted eigenvalue condition. 
%
Further, the estimators these works propose explicitly utilize knowledge of the covariate noise distribution.
%
Specifically, such methods attempt to learn a sparse $\beta^{*}$ via methods that are variants of Lasso 
(i.e., perform $\ell_1$-regularization), cf. \cite{loh_wainwright}, \cite{cocolasso}, \cite{tsybakov_1}, \cite{tsybakov_2}, \cite{tsybakov_3}, \cite{tsybakov_4}, \cite{orthogonal_1}, \cite{orthogonal_2}, \cite{weighted_l1}.
\fi

\subsubsection{Out-of-Sample Prediction} 
By and large, this literature has focused on model identification. 
Accordingly, the algorithms in the works above are ill-equipped to produce reliable predictions given corrupted and partially observed out-of-sample covariates. 
Therefore, even if the true model parameter $\bbeta^*$ is known, it is unclear how prior results can be extended to establish generalization error bounds. 
This work shows PCR can be easily adapted to handle these cases. 

\subsubsection{Knowledge of Noise Distribution} 
Many existing algorithms explicitly utilize knowledge of the underlying noise distribution to recover $\bbeta^*$.
%with respect to the $\ell_2$-error, i.e., a guarantee of the form $\| \hbeta - \bbeta^*\|_2$. 
%
Typically, these algorithms perform corrections of the form $\bZ^T \bZ - \Ex[\bW^T \bW]$. 
%$\sum_{i=1}^n (z_i \otimes z_i - \Ex[w_i \otimes w_i])$. 
%
To carry out this computation, one must assume access to either oracle knowledge 
of $\Ex[\bW^T \bW]$ or obtain a good data-driven estimator for it.
As \cite{orthogonal_2} note, such an estimator can be costly or simply infeasible in many practical settings. 
PCR does not require any such knowledge. 
Instead, the PCA subroutine within PCR {\em implicitly} de-noises the covariates. 
The trade-off is that our results only hold if the number of retained singular components $k$ is chosen to be the rank of $\bX$.
Although there are numerous heuristics to aptly choose $k$, we leave a formal analysis of PCR when $k$ is misspecified as important future work.

\subsubsection{Operating Assumptions}
We compare our primary assumptions with canonical assumptions in the literature. 

\medskip
\noindent
{\em I: Low-rank vis-\'{a}-vis sparsity.} 
The most popularly endowed structure in high-dimensional regression is that the model parameter $\bbeta^*$ is $r$-sparse. 
This work posits that the in-sample covariate matrix $\bX$ is described by $r$ nonzero singular values. 
These two notions are related. 
If $\rank(\bX) = r$, then there exists an $r$-sparse $\btbeta$ such that $\bX \bbeta^* = \bX \btbeta$; see Proposition 3.4 of \cite{pcr_jasa}.  
Meanwhile, if $\bbeta^{*}$ is $r$-sparse, then there exists a $\tilde{\bX}$ of rank $r$ that also provides equivalent responses. 
In this view, the two perspectives are complementary. 

%Our results in Section~\ref{sec:results} are controlled by the $\ell_1$-norm of the model parameter, as well as the rank of the underlying covariates.  
%%
%Hence, our results suffer when the model is dense. 
%%
%As an example, consider $n = p$ with each element in $\btbeta^*$ taking value $1/\sqrt{n}$. 
%%
%In this scenario, Theorems~\ref{thm:param_est} and \ref{thm:test_error} demonstrate inconsistency with respect to model identification and out-of-sample prediction error, respectively. 
%%
%From this, we argue that consistent estimation in high-dimensions for PCR additionally requires some form of sparsity of $\btbeta^*$. 

With that said, it is difficult to verify the sparsity of $\bbeta^*$, but the low-rank assumption on $\bX$ can be examined through the singular values of $\bZ$, as described in Section~\ref{sec:choose_k}. 
It is also well-established that (approximately) low-rank matrices are abundant in real-world data science applications \citep{xu2017rates, Udell2017NiceLV, udell2018big}. 

\medskip
\noindent
{\em II: Well-balanced spectra vis-\'{a}-vis restricted eigenvalue condition.}
The second common condition in the literature captures the amount of ``information spread'' across the rows and columns of $\bX$, which leads to a bound on its smallest singular value.
This is referred to as the restricted eigenvalue condition (see Definitions 1 and 2 in \cite{loh_wainwright}), which is imposed on the empirical estimate of the covariance of $\bX$. 
This work assumes the spectrum of 
$\bX$ is well-balanced (Assumption~\ref{assumption:balanced_spectra}).
This assumption is {\em not} necessary for consistent estimation. 
Rather, it is one condition that yields a reasonable $\snr$, which guarantees both model identification {\em and} vanishing out-of-sample prediction errors.

In many prior works, the restricted eigenvalue condition (and its variants) are shown to hold w.h.p. if the rows of $\bX$ are i.i.d. (or at least, independent) samples from a mean zero sub-gaussian distribution. 
This data generating process implies that the smallest and largest singular values of $\bX$ are of order $\tO(\sqrt{n} + \sqrt{p})$.
However, under the assumptions $\text{rank}(\bX) = r$ and $|X_{ij}| = \Theta(1)$, one can verify that $\|\bX \|_2 = \Omega(\sqrt{np / r} )$. 
The difference in the typical magnitude of the largest singular value reflects the difference in applications in which a restricted eigenvalue assumption versus a low-rank assumption is likely to hold.
The restricted eigenvalue assumption is particularly suited in applications such as compressed sensing where researchers {\em design} $\bX$. 
The applications arising in the social or life sciences primarily involve observational data. 
In such settings, a low-rank assumption on $\bX$ is arguably more suitable to capture the latent structure amongst the covariates. 
Ultimately, the Assumption~\ref{assumption:balanced_spectra} is similar to the restricted eigenvalue condition in that it requires the smallest and largest nonzero singular values of $\bX$ to be of the same order.

It turns out that analogous assumptions are pervasive across many fields.
Within the econometrics factor model literature, it is standard to assume that the factor structure is separated from the idiosyncratic errors, e.g., Assumption A of \cite{bai2020matrix}; 
within the robust covariance estimation literature, this assumption is closely related to the notion of pervasiveness, e.g., Proposition 3.2 of \cite{fan2017ellinfty}; 
within the matrix/tensor completion literature, it is assumed that the nonzero singular values are of the same order to achieve minimax 
optimal rates, e.g., \cite{cai2019nonconvex}.
Assumption~\ref{assumption:balanced_spectra} has also been shown to hold w.h.p. for the embedded Gaussians model, which is a canonical probabilistic generating process 
used to analyze probabilistic PCA \citep{probpca, bayesianpca, pcr_jasa}. 
%
% In summary, the well-balanced spectra and restricted eigenvalue conditions require the signal to be well-spread across the covariates; 
% %
% for a detailed comparison between the two, please see Section 3.5 in \cite{pcr_jasa}.
%
Finally, like the low-rank assumption, a practical benefit of the well-balanced spectra assumption is that it can be empirically examined via the same procedure outlined in Section \ref{sec:choose_k}. 

\subsection{Linear Regression with Hidden Confounding}\label{ssec:lr.confound}
The problem of high-dimensional error-in-variables regression is related to linear regression with hidden confounding, a common model within the causal inference and econometrics literatures \citep{guo2020doubly, spectral_deconfounding}. 
As noted by \cite{guo2020doubly}, a particular class of error-in-variables models can be reformulated as linear regression with hidden confounding.
Using our notation, they consider a high-dimensional model where the rows of $\bX$ are sampled i.i.d. 
As such, $\bX$ can be full-rank, but $\bW$ is assumed to have low-rank structure. 
The aim of this work is to estimate a sparse $\bbeta^*$.
In comparison, we place the low-rank assumption on $\bX$, and assume the rows of $\bW$ are sampled independently and thus, can be full-rank.
Notably, for this setup, \cite{spectral_deconfounding} ``deconfounds'' the observed covariates $\bZ$ by a spectral transformation of its singular values.
It is interesting future work to analyze PCR for this important and closely related scenario. 

% \begin{itemize}
% 	\item Low-rank
	
% 	\begin{enumerate}
% 		\item visually verify (analytically too via effective rank)
% 		\item connections to sparse $\beta$ (need sparsity in some domain) 
% 		\item approximate low-rankness is ubiquitous in HD domain (cite Udell, Xiaomin, etc.) 
% 		\item extending to approximate case is interesting future work (maybe bring conclusion stuff here on this topic) 
% 	\end{enumerate}
	
% 	\item Well-balanced spectra 
	
% 	\begin{enumerate}
% 		\item can visually verify (via same method as above) 
% 		\item justify (e.g., canonical PCA) 
% 		\item common assumption in many fields with other names (cite pervasiveness, COLT)
% 		\item connection to incoherence, Restricted eigenvalue, etc. (all these approaches need well-spread information)
% 		\item Compared to other notions, however, this can be empirically verified 
% 	\end{enumerate}
	
% 	\item Subspace-inclusion (new perspective on generalization) 
	
% 	\begin{enumerate}
% 		\item distribution-free approach to generalization; data complexity over model complexity
% 		\item if data is sampled iid, then this holds whp 
% 		\item connections to transfer learning/learning with covariate shift/distribution shift 
% 		\item connecting this notion to standard generalization or finding nonlinear analogues is interesting future work 
% 	\end{enumerate}	

% \end{itemize}

\section{Conclusion}\label{sec:conclusion}
The most immediate direction for future work is to establish bounds when the covariates are approximately low-rank. 
Within this context, our analysis suggests PCR induces an additional error of the form $\|(\bI - \bV_r \bV_r^T) \btbeta^*\|_2$, where $\bV_r$ is formed from the top $r$ principal components of $\bX$. 
This is the unavoidable model misspecification error that results from taking a rank $r$ approximation of $\bX$. 
It stands to reason that {\em soft} singular value thresholding (SVT), which appropriately down-weights the singular values of $\btZ$, may be a more appropriate algorithmic approach as opposed to the {\em hard} SVT. 

Another future line of research is to bridge our out-of-sample prediction analysis with recent works that analyze over-parameterized estimators.
\cite{ols1}, for instance, demonstrates that the minimum $\ell_2$-norm linear regression solution predicts well out-of-sample despite a perfect fit to noisy in-sample data; this phenomena is known as ``benign overfitting''. 
To establish their result, \cite{ols1} introduces two notions of ``effective rank'' of the data covariance, and characterize linear regression problems that exhibit benign overfitting with respect to these quantities. 
In comparison, this work characterizes the out-of-sample prediction performance of PCR with respect to the $\ell_2$-distance between the in- and out-of-sample covariates (see Assumption~\ref{assumption:subspace_incl}).
Accordingly, one exciting research agenda is to explore the interplay of these two conceptions for over-parameterized linear estimators. 
This may also have implications for approximately low-rank settings. 

%plays a pivotal role in the out-of-sample prediction performance of PCR.

%In comparison, alternative techniques that bound the generalization error of modern statistical estimators focus on the complexity of the learning algorithm itself, and assume the data generating process produces i.i.d. samples.
%%
%Hence, a fruitful approach to produce tighter bounds for more complex nonlinear settings may be to exploit both the complexity of the learning algorithm and the relative complexity of the in-and out-of-sample covariates, possibly by adapting Assumption~\ref{assumption:subspace_incl} to an appropriate nonlinear notion. 
%%

% Acknowledgements and Disclosure of Funding should go at the end, before appendices and references

\acks{We thank Peng Ding and various members within MIT’s Laboratory for Information and Decision Systems (LIDS) for useful discussions and guidance. 
The data and code to reproduce the results in this article are available at \href{https://github.com/deshen24/principal-component-regression}{https://github.com/deshen24/principal-component-regression}.  
%All acknowledgements go at the end of the paper before appendices and references.
%Moreover, you are required to declare funding (financial activities supporting the
%submitted work) and competing interests (related financial activities outside the submitted work).
}

% Manual newpage inserted to improve layout of sample file - not
% needed in general before appendices/bibliography.

\newpage

\appendix
\section{Illustrative Simulations: Details} \label{sec:simulation_details} 
We present the generative models in our simulation studies in Section~\ref{sec:simulations}.

% param est
\subsection{PCR Identifies the Minimum $\ell_2$-norm Model} \label{sec:sim1.details}
We generate $\bX = \bU \bV^T$, where the entries of $\bU, \bV$ are sampled independently from a standard normal distribution. 
Next, we generate $\bbeta^* \in \Rb^p$ by sampling from a multivariate standard normal vector with independent entries, and normalize it by $\| \bbeta^* \|_2$ so that it has unit norm.
We define $\btbeta^* = \bX^\dagger \bX \bbeta^*$. 
For each simulation repeat, we independently sample the entries of $\bvarepsilon \in \Rb^{n}$
from a normal distribution with mean $0$ and variance $\sigma^2 = 0.2$. 
The entries of $\bW \in \Rb^{n \times p}$ are sampled in an identical fashion. 
We then define our observed response vector as $\by = \bX \bbeta + \bvarepsilon$ and observed covariate matrix as $\bZ = \bX + \bW$. 
For simplicity, we do not mask any of the entries. 

% DISTRIBUTION SHIFTS
\subsection{PCR is Robust to Covariate Shifts} \label{sec:sample}
We generate $\bX = \bU \bV^T$ as in Appendix~\ref{sec:sim1.details}. 
Next, we generate four different out-of-sample covariates, defined as $\bXp_1, \bXp_2, \bXp_3, \bXp_4$ via the following procedure: 
We independently sample the entries of $\bUp_1$ from a standard normal distribution, and define $\bXp_1 = \bUp_1 \bV^T$. 
We define $\bXp_2 = \bUp_2 \bV^T$ similarly with the entries of $\bUp_2$ sampled from $\mathcal{N}(0, 5)$. 
Next, we independently sample the entries of $\bUp_3$ from $\text{Uniform}[-\sqrt{3}, \sqrt{3}]$,
and define $\bXp_3 = \bUp_3 \bV^T$. 
We define $\bXp_4 = \bUp_4 \bV^T$ similarly
with the entries of $\bUp_4$ sampled from $\text{Uniform}[-\sqrt{15}, \sqrt{15}]$.  

\medskip
By construction, the mean and variance of the entries in $\bXp_3$ match that of $\bXp_1$; an analogous relationship holds between $\bXp_4$ and $\bXp_2$. 
While $\bXp_1$ follows the same distribution as that of $\bX$, there is a clear distribution shift from $\bX$ to $\bXp_3, \bXp_2, \bXp_4$. 

\medskip
We proceed to generate $\bbeta^*$ from a standard multivariate normal.
We define $\btheta'_1 = \bXp_1 \bbeta^*$, and define 
$\btheta'_2, \btheta'_3, \btheta'_4$ analogously. 
Further, the entries of $\bvarepsilon$ and $\bW, \bWp$ are independently sampled from a normal distribution with variance $\sigma^2 = 0.2$.
We define the training responses as $\by = \bX \bbeta^* + \bvarepsilon$ and observed training covariates as $\bZ = \bX + \bW$. 
The first set of observed testing covariates is defined as $\bZp_1 = \bXp_1 + \bWp$, with analogous definitions for $\bZp_2, \bZp_3, \bZp_4$.

% SUBSPACE INCLUSION VS. DISTRIBUTIONAL INVARIANCE
\subsection{PCR Generalizes under Assumption~\ref{assumption:subspace_incl}}
We generate $\bX = \bU \bV^T$ as in Appendix~\ref{sec:sim1.details}. 
We now generate two different testing covariates. 
First, we generate $\bXp_1 = \bUp \bV^T$, where the entries of $\bUp$ are independently sampled from a normal distribution with mean zero and variance $5$.
As such, it follows that Assumption \ref{assumption:subspace_incl} immediately holds between $\bXp_1$ and $\bX$, though they do not obey the same distribution.
Next, we generate $\bXp_2 = \bU \bV^{\prime T}$, where the entries of $\bVp$ are independently sampled from a standard normal (just as in $\bV$). 
In doing so, we ensure that $\bXp_2$ and $\bX$ follow the same distribution, though Assumption \ref{assumption:subspace_incl} no longer holds. 

\medskip
We generate $\bbeta^*$ as in Appendix~\ref{sec:sample}, and define $\btheta'_1 = \bXp_1 \bbeta^*$ and $\btheta'_2 = \bXp_2 \bbeta^*$. 
We also generate $\bvarepsilon, \bW, \bWp$ as in Appendix~\ref{sec:sample}.
In turn, we define the training data as $\by = \bX \bbeta^* + \bvarepsilon$ and $\bZ = \bX + \bW$, and testing data as $\bZp_1 = \bXp_1 + \bWp$ and $\bZp_2 = \bXp_2 + \bWp$. 

% MISSING DATA
\subsection{PCR Generalizes with MCAR Entries}
We generate $\bX = \bU \bV^T$ as in Appendix~\ref{sec:sim1.details} and generate $\bXp = \bUp \bV^T$, where the entries of $\bUp$ are independently sampled from a standard normal. 
As such, it follows that Assumption \ref{assumption:subspace_incl} immediately holds between $\bXp$ and $\bX$. 

\medskip
We generate $\bbeta^*$ as in Appendix~\ref{sec:sample}, and define $\btheta = \bXp \bbeta^*$.
We also generate $(\bvarepsilon, \bW, \bWp)$ as in Appendix~\ref{sec:sample}. 
There are $\rho$ entries in $\bPi, \bPip$ that are randomly assigned the value $1$, and each iteration considers a different permutation of revealed entries. 
Putting everything together, we define the training data as $\by = \bX \bbeta^* + \bvarepsilon$ and $\bZ = (\bX + \bW) \circ \bPi$, and testing data as $\bZp = (\bXp + \bWp) \circ \bPip$.

\section{Proof of Theorem \ref{thm:param_est}}\label{sec:proof_thm_4.1} 
We start with some useful notation. 
Note $\bX \bbeta^* = \bX \btbeta^*$.
Let $\by = \bX \btbeta^* + \bvarepsilon$ be the vector notation of \eqref{eq:response_vector} with $\by = [y_i: i \le n] \in \Rb^n$, $\bvarepsilon = [\varepsilon_i: i \le n] \in \Rb^n$.  
Throughout, let $\bX  = \bU \bS \bV^T$ denote the singular value decomposition (SVD) of $\bX$.
Recall that we write $\btZ = \hrho^{-1} \bZ = \bhU \bhS \bhV^T$ for the SVD of $\btZ$.
Its truncation using the top $k$ singular components is denoted as $\btZ^k = \bhU_k \bhS_k \bhV_k^T$. 

Further, we will often use the following bound: 
for any $\bA \in \Rb^{a \times b}$, $\bv \in \Rb^{b}$, 
\begin{align}\label{eq:2.inf.ineq}
\| \bA \bv \|_2 & = \| \sum_{j=1}^b \bA_{\cdot j} v_j \|_2  \leq \big(\max_{j \le b} \| \bA_{\cdot j}\|_2\big) \big(\sum_{j=1}^b |v_j|\big) 
%\nonumber \\ & 
= \| \bA\|_{2, \infty} \|\bv\|_1,
\end{align}
where $\|\bA\|_{2,\infty} = \max_{j} \| \bA_{\cdot j}\|_2$ with $\bA_{\cdot j}$ representing the $j$-th column of $\bA$. 

As discussed in Section \ref{sec:param_est_results}, we will consider $\btbeta^*$ as our model parameter of interest. 
This corresponds to the  unique minimum $\ell_2$-norm model parameter satisfying \eqref{eq:response_vector} for $i \le n$.
As a result, it follows that 
\begin{align}\label{eq:2.norm.min}
\bV_{\perp}^T \btbeta^* &= \bzero,
\end{align} 
where $\bV_\perp$ represents a matrix of orthornormal basis vectors that span the nullspace of $\bX$.

Similarly, let $\bhV_{k, \perp} \in \Rb^{p \times (p-k)}$ be a matrix of orthonormal basis vectors that span the nullspace of $\btZ^k$; thus, $\bhV_{k, \perp}$ is orthogonal to $\bhV_k$. 
Then, 
\begin{align}
\| \bhbeta - \btbeta^*\|_2^2 & =  \| \bhV_k \bhV_k^T (\bhbeta - \btbeta^*) + \bhV_{k, \perp}\bhV_{k, \perp}^T (\bhbeta - \btbeta^*)\|_2^2 \nonumber \\
& = \| \bhV_k \bhV_k^T (\bhbeta - \btbeta^*)  \|_2^2 ~+~ \| \bhV_{k, \perp}\bhV_{k, \perp}^T (\bhbeta - \btbeta^*)\|_2^2 \nonumber \\
& =  \| \bhV_k \bhV_k^T (\bhbeta - \btbeta^*)  \|_2^2 ~+~ \| \bhV_{k, \perp}\bhV_{k, \perp}^T\btbeta^*\|_2^2. \label{eq:thm1.1}
\end{align}
Note that in the last equality we have used Property \ref{property:equiv}, which states that $\bhV_{k, \perp}^T \bhbeta = \bzero$. 
Next, we bound the two terms in \eqref{eq:thm1.1}. 

\vspace{10pt} \noindent 
{\em Bounding $ \| \bhV_k \bhV_k^T (\bhbeta - \btbeta^*)  \|_2^2$. } 
To begin, note that 
\begin{align}\label{eq:thm1.2.1}
\| \bhV_k \bhV_k^T (\bhbeta - \btbeta^*)  \|_2^2 & 
= \|\bhV_k^T (\bhbeta - \btbeta^*)  \|_2^2, 
\end{align}
since $\bhV_k$ has orthonormal columns. 
Next, consider
\begin{align}
\| \btZ^k(\bhbeta - \btbeta^*)\|_2^2 & \leq 2\| \btZ^k \bhbeta -\bX \btbeta^* \|_2^2 + 2 \| \bX \btbeta^* - \btZ^k \btbeta^*\|_2^2 \nonumber \\
& \leq 2\| \btZ^k \bhbeta -\bX \btbeta^* \|_2^2 + 2 \| \bX - \btZ^k \|_{2,\infty}^2 \|\btbeta^*\|_1^2, 
\end{align}
where we used \eqref{eq:2.inf.ineq}. Recall that $\btZ^k = \bhU_k \bhS_k \bhV_k^T$. 
Therefore, 
\begin{align}
\| \btZ^k(\bhbeta - \btbeta^*)\|_2^2 
& =  (\bhbeta - \btbeta^*)^T \bhV_k \bhS_k^2 \bhV_k^T (\bhbeta - \btbeta^*) \nonumber \\
& = (\bhV_k^T (\bhbeta - \btbeta^*))^T \bhS_k^2 (\bhV_k^T (\bhbeta - \btbeta^*)) \nonumber \\
& \geq \hs_k^2 \| \bhV_k^T (\bhbeta - \btbeta^*) \|_2^2. 
\end{align}
Therefore using \eqref{eq:thm1.2.1}, we conclude that 
\begin{align}\label{eq:thm1.2.2}
\| \bhV_k \bhV_k^T (\bhbeta - \btbeta^*)  \|_2^2 & \leq \frac{2}{\hs_k^2} \Big(\| \btZ^k \bhbeta -\bX \btbeta^* \|_2^2 +  \| \bX - \btZ^k \|_{2,\infty}^2 \|\btbeta^*\|_1^2\Big).
\end{align}
Next, we bound $\| \btZ^k \bhbeta -\bX \btbeta^* \|_2$. 
\begin{align}\label{eq:thm1.2.3}
\| \btZ^k \bhbeta - \by \|_2^2 & = \| \btZ^k \bhbeta - \bX \btbeta^* - \bvarepsilon\|_2^2 \nonumber \\
& = \| \btZ^k \bhbeta - \bX \btbeta^*\|_2^2 + \|\bvarepsilon\|_2^2 - 2 \langle \btZ^k \bhbeta - \bX \btbeta^*, \bvarepsilon\rangle.
\end{align}
By Property \ref{property:equiv} we have, 
\begin{align}\label{eq:thm1.2.4}
\| \btZ^k \bhbeta - \by \|_2^2 & \leq \| \btZ^k \btbeta^* - \by \|_2^2 ~=~\| (\btZ^k - \bX) \btbeta^* - \bvarepsilon\|_2^2 \nonumber \\
& = \| (\btZ^k - \bX) \btbeta^*\|_2^2 + \| \bvarepsilon\|_2^2 - 2 \langle (\btZ^k - \bX) \btbeta^*, \bvarepsilon\rangle. 
\end{align}
From \eqref{eq:thm1.2.3} and \eqref{eq:thm1.2.4}, we have 
\begin{align}
\| \btZ^k \bhbeta - \bX \btbeta^*\|_2^2 & \leq \| (\btZ^k - \bX) \btbeta^*\|_2^2 + 2 \langle \btZ^k (\bhbeta - \btbeta^*), \bvarepsilon\rangle \nonumber \\
& \leq \| \bX - \btZ^k \|_{2,\infty}^2 \|\btbeta^*\|_1^2 + 2 \langle \btZ^k (\bhbeta - \btbeta^*), \bvarepsilon\rangle, 
\label{eq:thm1.2.5}
\end{align}
where we used \eqref{eq:2.inf.ineq}.  From \eqref{eq:thm1.2.2} and \eqref{eq:thm1.2.5}, we conclude that 
\begin{align}\label{eq:thm1.2}
\| \bhV_k \bhV_k^T (\bhbeta - \btbeta^*)  \|_2^2 & \leq \frac{4}{\hs_k^2} \Big( \| \bX - \btZ^k \|_{2,\infty}^2 \|\btbeta^*\|_1^2 +\langle \btZ^k (\bhbeta - \btbeta^*), \bvarepsilon\rangle \Big).
\end{align}

\vspace{10pt} \noindent 
{\em {Bounding $ \| \bhV_{k, \perp}\bhV_{k, \perp}^T\btbeta^*\|_2^2$. }} 
Consider
\begin{align}\label{eq:thm1.3.1}
\| \bhV_{k, \perp}\bhV_{k, \perp}^T\btbeta^*\|_2 
	& = \| (\bhV_{k, \perp}\bhV_{k, \perp}^T - \bV_{\perp}\bV_{\perp}^T) \btbeta^* + \bV_{\perp}\bV_{\perp}^T \btbeta^*\|_2 \nonumber \\
	& \stackrel{(a)}{=} \| (\bhV_{k, \perp}\bhV_{k, \perp}^T - \bV_{\perp}\bV_{\perp}^T) \btbeta^* \|_2 \nonumber \\
	& \leq \| \bhV_{k, \perp}\bhV_{k, \perp}^T - \bV_{\perp}\bV_{\perp}^T \|_2 ~ \|\btbeta^*\|_2,
\end{align}
where (a) follows from $\bV_\perp^T \btbeta^* = \bzero$ due to \eqref{eq:2.norm.min}. 
Then,
\begin{align}\label{eq:thm1.3.2}
\bhV_{k, \perp}\bhV_{k, \perp}^T - \bV_{\perp}\bV_{\perp}^T & = (\Id - \bV_{\perp}\bV_{\perp}^T) - (\Id - \bhV_{k, \perp}\bhV_{k, \perp}^T ) \nonumber \\
& = \bV \bV^T - \bhV_{k} \bhV_{k}^T.
\end{align}
From \eqref{eq:thm1.3.1} and \eqref{eq:thm1.3.2}, it follows that 
\begin{align}\label{eq:thm1.3}
\| \bhV_{k, \perp}\bhV_{k, \perp}^T\btbeta^*\|_2 & \leq  \| \bV \bV^T - \bhV_{k} \bhV_{k}^T\|_2 ~ \|\btbeta^*\|_2.
\end{align}

\vspace{10pt} \noindent 
{\em {Bringing together \eqref{eq:thm1.1}, \eqref{eq:thm1.2}, and \eqref{eq:thm1.3}. }} 
Collectively, we obtain 
\begin{align}\label{eq:thm1.4}
\| \bhbeta - \btbeta^*\|_2^2 &\leq  \| \bV \bV^T - \bhV_{k} \bhV_{k}^T\|_2^2 ~ \|\btbeta^*\|_2^2 \nonumber
\\ &\quad + \frac{4}{\hs_k^2} \Big( \| \bX - \btZ^k \|_{2,\infty}^2 ~ \|\btbeta^*\|_1^2 +\langle \btZ^k (\bhbeta - \btbeta^*), \bvarepsilon\rangle \Big).
\end{align}

\vspace{10pt} \noindent 
{\em Key lemmas. } 
We state the key lemmas bounding each of the terms on the right hand side of \eqref{eq:thm1.4}. 
This will help us conclude the proof of Theorem \ref{thm:param_est}. 
The proofs of these lemmas are presented in Sections \ref{ssec:thm1.lem1}, \ref{ssec:thm1.lem2}, \ref{ssec:thm1.lem3}, \ref{ssec:thm1.lem4}. 

\vspace{5pt}
\begin{lemma}\label{lem:thm1.1}
Consider the setup of Theorem \ref{thm:param_est}, and PCR with parameter $k = r$.
Then, for any $t > 0$, the following holds w.p. at least $1-\exp(-t^2)$: 
\begin{align}\label{eq:thm1.lem1.f2.0} 
\| \bU \bU^T - \bhU_r \bhU_r^T\|_2 & \leq
C(K + 1) ( \gamma + 1) \frac{\sqrt{n} + \sqrt{p} + t}{\rho s_r}, 
\\ \| \bV \bV^T - \bhV_r \bhV_r^T\|_2 & \leq 
C(K + 1) ( \gamma + 1)  \frac{\sqrt{n} + \sqrt{p} + t}{\rho s_r}. 
\end{align}
Here, $s_r>0$ represents the $r$-th largest singular value of $\bX$. 
\end{lemma}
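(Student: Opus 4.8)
The plan is to recognize the two left-hand sides as distances between singular subspaces and to invoke a Wedin $\sin\Theta$ perturbation bound. Writing $\bE \coloneqq \btZ - \bX$ for the effective noise, observe that $\bX = \bU\bS\bV^T$ has exactly $r$ nonzero singular values, while $\bhU_r, \bhV_r$ span the leading rank-$r$ singular subspaces of $\btZ = \bX + \bE$. Since $\bU$ spans the entire column space of $\bX$, the quantity $\|\bU\bU^T - \bhU_r\bhU_r^T\|_2$ equals $\|\sin\Theta(\bU, \bhU_r)\|_2$, the sine of the largest principal angle between the two subspaces, and symmetrically for $\bV$ on the right. It therefore suffices to (i) control $\|\bE\|_2$ and (ii) lower bound the relevant singular-value gap, after which Wedin's theorem yields both displayed inequalities at once.

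For step (i), I would decompose $\bE$ according to the three independent sources of randomness, writing $\bE_{ij} = (\pi_{ij}/\hrho - 1) X_{ij} + (\pi_{ij}/\hrho) W_{ij}$ together with the fluctuation of $\hrho$ about $\rho$; these are the rescaled masking of the signal and the masked-and-rescaled measurement noise. Each piece is a random matrix with independent, mean-zero, sub-gaussian-type entries, so a sub-gaussian matrix deviation inequality (the same concentration already invoked for $\|\bW\|_2$ in Lemma~\ref{lem.perturb.1}, or non-commutative Bernstein) gives, with probability at least $1 - \exp(-t^2)$, the bound $\|\bE\|_2 \le C(K,\gamma)(\sqrt{n} + \sqrt{p} + t)/\rho$. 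The hypothesis $\rho \ge c \log^2(np)/(np)$ guarantees enough observed entries for $\hrho$ to concentrate about $\rho$, so that replacing $1/\hrho$ by $1/\rho$ costs only a constant factor.

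For step (ii), since $\bX$ has rank $r$ we have $s_{r+1}(\bX) = 0$, and Weyl's inequality gives $s_{r+1}(\btZ) \le \|\bE\|_2$. Hence the gap governing Wedin's bound satisfies $\delta = s_r(\bX) - s_{r+1}(\btZ) \ge s_r - \|\bE\|_2$. This is exactly where the SNR hypothesis enters: $\snr \ge C_\noise$ forces $\rho s_r \gtrsim \sqrt{n} + \sqrt{p}$, and choosing $C_\noise$ large relative to $C(K,\gamma)$ ensures $\|\bE\|_2 \le s_r/2$, so $\delta \ge s_r/2$. Wedin's $\sin\Theta$ theorem then yields $\max\!\big(\|\sin\Theta(\bU,\bhU_r)\|_2, \|\sin\Theta(\bV,\bhV_r)\|_2\big) \le \|\bE\|_2/\delta \le 2\|\bE\|_2/s_r$, and substituting the bound on $\|\bE\|_2$ produces the claimed $C(K,\gamma)(\sqrt{n} + \sqrt{p} + t)/(\rho s_r)$ for both the left and right subspaces.

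The main obstacle is step (i): securing the operator-norm bound on $\bE$ with the correct $(\sqrt{n} + \sqrt{p})$ scaling and explicit $(K,\gamma)$-dependence. The difficulty is that the measurement noise is multiplied by the random mask and by $1/\hrho$, so the entries of $\bE$ are products of dependent random variables and only conditionally sub-gaussian; one must control the heavier tails introduced by the $1/\hrho$ rescaling and by the signal-times-mask term, and verify that the fluctuation of $\hrho$ does not degrade the rate. Once $\|\bE\|_2$ is in hand, the reduction to Wedin via Weyl's inequality and the SNR condition is routine.
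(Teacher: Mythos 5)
Your overall strategy---Wedin's $\sin\Theta$ theorem plus an operator-norm bound on the noise---is the same as the paper's, but your choice of centering creates exactly the difficulty you flag as the ``main obstacle,'' and the paper's proof shows how to avoid it entirely. You perturb $\btZ = (1/\hrho)\bZ$ around $\bX$, so your noise matrix $\bE = \btZ - \bX$ carries the random factor $1/\hrho$, which couples every entry; your claim that $\bE$ has ``independent, mean-zero, sub-gaussian-type entries'' is therefore not literally true as written, and a direct sub-gaussian deviation inequality does not apply without first conditioning on or splitting off the $\hrho$ fluctuation. The paper instead observes that the singular subspaces of $\btZ$ coincide with those of $\bZ$ (rescaling does not change singular vectors), and those of $\bX$ coincide with those of $\rho\bX$, so it applies Wedin to the pair $(\rho\bX,\, \bZ)$ with noise $\btW = \bZ - \rho\bX$. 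This matrix genuinely has independent, mean-zero, sub-gaussian rows (Lemma~\ref{lem:subG.z}), so $\|\btW\|_2 \le C(K,\gamma)(\sqrt{n}+\sqrt{p}+t)$ follows from a single covariance-concentration argument (Lemmas~\ref{lem:subg_matrix} and~\ref{lem:bound.cov}), and the denominator $\rho s_r$ appears automatically as the singular-value gap of $\rho\bX$. No estimate of $\hrho$ and no SNR condition are needed for this lemma.

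Two further consequences of your route are worth noting. First, your gap argument $\delta = s_r - \|\bE\|_2 \ge s_r/2$ uses $\snr \ge C_\noise$, but since the lemma is claimed ``for any $t>0$,'' the inequality $\|\bE\|_2 \le s_r/2$ can fail when $t \gtrsim \rho s_r$; in that regime you must fall back on the trivial bound $\|\bU\bU^T - \bhU_r\bhU_r^T\|_2 \le 1$ (which does make the claimed right-hand side vacuous after enlarging $C(K,\gamma)$, so this is patchable but should be said). Second, controlling $1/\hrho$ requires the event $\hrho \ge \rho/2$, which holds only with probability $1-O(1/(np)^{10})$; your proof would therefore deliver probability $1-\exp(-t^2)-O(1/(np)^{10})$ rather than the clean $1-\exp(-t^2)$ in the statement. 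Both issues evaporate under the paper's centering at $\rho\bX$.
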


\begin{lemma}\label{lem:thm1.2}
Consider PCR with parameter $k= r$ and $\rho \geq c (np)^{-1} \log^2 (np)$.
Then w.p. at least $1-O(1/(np)^{10})$,
\begin{align}
&\| \bX - \btZ^r \|_{2,\infty}^2 
	\\ &\quad \leq C(K+1)^4 (\gamma + 1)^2  
		\left( \frac{(n+p)(n +\sqrt{n}\, \log(np))}{\rho^4 s_r^2 } + \frac{r + \sqrt{r}\, \log(np)}{\rho^2} \right) + C \frac{\log(np)}{\rho \, p}.
\end{align}
\end{lemma}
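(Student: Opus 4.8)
The plan is to control the maximum column norm of $\bX-\btZ^r$ by exploiting that $\btZ^r$ is the projection of $\btZ$ onto its top-$r$ left singular subspace, i.e.\ $\btZ^r=\bhU_r\bhU_r^T\btZ$. Since $\bX$ has rank $r$ we have $\bU\bU^T\bX=\bX$, so adding and subtracting $\bhU_r\bhU_r^T\bX$ and then $\bU\bU^T(\bX-\btZ)_{\cdot j}$ yields, for every column $j\in[p]$, the exact decomposition
\begin{align}\label{eq:plan.decomp}
(\bX-\btZ^r)_{\cdot j}
&= \underbrace{(\bU\bU^T-\bhU_r\bhU_r^T)\bX_{\cdot j}}_{(\mathrm a)}
 + \underbrace{(\bhU_r\bhU_r^T-\bU\bU^T)(\bX-\btZ)_{\cdot j}}_{(\mathrm b)}
 + \underbrace{\bU\bU^T(\bX-\btZ)_{\cdot j}}_{(\mathrm c)}.
\end{align}
The purpose of this three-way split is to isolate the statistical dependence between the data-driven subspace $\bhU_r$ and the noise $\bX-\btZ$: term $(\mathrm a)$ applies the subspace error to the deterministic signal, term $(\mathrm c)$ projects the noise onto the \emph{fixed} (deterministic) subspace $\bU$, and only term $(\mathrm b)$ couples the two, which I will absorb into the operator-norm subspace bound of Lemma~\ref{lem:thm1.1}. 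I would then bound $\|\bX-\btZ^r\|_{2,\infty}^2\le 3\max_j(\|(\mathrm a)\|_2^2+\|(\mathrm b)\|_2^2+\|(\mathrm c)\|_2^2)$ term by term.

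For $(\mathrm a)$ I would use $\|(\mathrm a)\|_2\le\|\bU\bU^T-\bhU_r\bhU_r^T\|_2\,\|\bX_{\cdot j}\|_2\le\|\bU\bU^T-\bhU_r\bhU_r^T\|_2\sqrt n$, invoking $\|\bX\|_{\max}\le1$ (Assumption~\ref{assumption:bounded}), and then Lemma~\ref{lem:thm1.1} with $t\asymp\sqrt{\log(np)}$ to get $\|(\mathrm a)\|_2^2\lesssim n(n+p)/(\rho^2 s_r^2)$, which is dominated by the leading term of the claimed bound (since $\rho\le1$). For $(\mathrm b)$ I would similarly peel off $\|(\mathrm b)\|_2\le\|\bhU_r\bhU_r^T-\bU\bU^T\|_2\,\|(\bX-\btZ)_{\cdot j}\|_2$ and combine the subspace bound $\|\bhU_r\bhU_r^T-\bU\bU^T\|_2\lesssim(\sqrt n+\sqrt p)/(\rho s_r)$ with a column-norm concentration bound $\|(\bX-\btZ)_{\cdot j}\|_2^2\lesssim (n+\sqrt n\,\log(np))/\rho^2$; multiplying gives exactly the dominant contribution $\tfrac{(n+p)(n+\sqrt n\,\log(np))}{\rho^4 s_r^2}$.

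For $(\mathrm c)$ the key observation is that $\|\bU\bU^T(\bX-\btZ)_{\cdot j}\|_2=\|\bU^T(\bX-\btZ)_{\cdot j}\|_2$ is the norm of an $r$-dimensional vector obtained by projecting the noise column onto the \emph{deterministic} subspace $\bU$. Writing this as $\sum_{i\le n}\bU_{i\cdot}(\bX-\btZ)_{ij}$, a sum of independent (across rows $i$) contributions, I would apply a Bernstein/sub-exponential bound on each of the $r$ coordinates to obtain $\|(\mathrm c)\|_2^2\lesssim (r+\sqrt r\,\log(np))/\rho^2$, the second term of the claim. The residual $\log(np)/(\rho p)$ term I expect to arise from replacing the random normalization $\hrho$ by $\rho$: under $\rho\ge c\log^2(np)/(np)$ one has $\hrho$ tightly concentrated around $\rho$, and the resulting bias in the masked-signal part $(\tfrac{\pi_{ij}}{\hrho}-1)X_{ij}$, once projected through $\bU$ and summed, is lower order. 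A union bound over the $p$ columns (and $r$ coordinates) with $\log(np)$-scale tail parameters then delivers the overall probability $1-O(1/(np)^{10})$.

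The main obstacle is the pair of noise-concentration estimates underlying $(\mathrm b)$ and $(\mathrm c)$: bounding $\|(\bX-\btZ)_{\cdot j}\|_2$ and $\|\bU^T(\bX-\btZ)_{\cdot j}\|_2$ for the rescaled, masked, sub-gaussian noise whose entries are products of a Bernoulli mask, a sub-gaussian $\bw_i$, and the \emph{random} normalization $1/\hrho$. These are sums of heavy-ish (sub-exponential) terms with a shared random scaling, so the delicate part is establishing the sharp $n+\sqrt n\log(np)$ and $r+\sqrt r\log(np)$ fluctuation orders uniformly over all $p$ columns while simultaneously decoupling the $\hrho$-versus-$\rho$ discrepancy; everything else reduces to Lemma~\ref{lem:thm1.1} and the elementary operator-norm inequalities above.
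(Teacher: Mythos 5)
Your decomposition is algebraically the same as the paper's: the paper first splits $\btZ^r_{\cdot j}-\bX_{\cdot j}$ orthogonally into its component along $\bhU_r$ and the complement (your term $(\mathrm a)$), then splits the range component into exactly your terms $(\mathrm b)$ and $(\mathrm c)$ after isolating the $\hrho$-versus-$\rho$ discrepancy, which produces the $\log(np)/(\rho\,p)$ remainder via a Chernoff bound on $\hrho$ just as you anticipate; all pieces are then controlled by Wedin's theorem (Lemma~\ref{lem:thm1.1}) and noise-column concentration, as in your plan. The one step where your sketch as written falls short of the stated rate is term $(\mathrm c)$: bounding each of the $r$ coordinates of $\bU^T(\bZ_{\cdot j}-\rho\bX_{\cdot j})$ by Bernstein and union bounding over coordinates gives $O(r\log(np))$, not $r+\sqrt{r}\,\log(np)$, because the $r$ squared coordinates share the same noise vector and the improvement requires their fluctuations to partially cancel. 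The paper instead applies the Hanson--Wright inequality to the quadratic form $\bzeta^T\bU\bU^T\bzeta$ with $\|\bU\bU^T\|_F^2=r$ and $\|\bU\bU^T\|_2=1$ (Lemma~\ref{lem:projection}), which yields the $\sqrt{r}\,\log(np)$ fluctuation; the same device with $\bQ=\Id$ delivers your $n+\sqrt{n}\,\log(np)$ column-norm bound uniformly over the $p$ columns. This is a one-line repair and does not change the architecture of your argument.
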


\begin{lemma}\label{lem:thm1.3}
If $\rho \geq c (np)^{-1} \log^2 (np)$, then for any $k$, we have w.p. at least  $1-O(1/(np)^{10})$,
\begin{align}\label{eq:lem.thm.1.3.0}
| \hs_k - s_k | & \leq C(K + 1) ( \gamma + 1) \frac{\sqrt{n} + \sqrt{p}}{\rho} + C \frac{\sqrt{\log(np)}}{\sqrt{\rho \,np}} s_k.
\end{align}
\end{lemma}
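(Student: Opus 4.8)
The plan is to control each singular value via Weyl's inequality, but compared against a \emph{rescaled} signal matrix $\tfrac{\rho}{\hrho}\bX$ rather than $\bX$ itself; this choice is exactly what produces the multiplicative $s_k$ term in the bound. Recalling that $\bZ = (\bX + \bW)\circ \bPi$ and $\btZ = (1/\hrho)\bZ$, I would first write the decomposition
\begin{align}
\btZ &= \frac{1}{\hrho}\,\bPi\circ(\bX + \bW) = \frac{\rho}{\hrho}\,\bX + \bE,
&\bE &:= \frac{1}{\hrho}\Big[\big(\bPi\circ\bX - \rho\bX\big) + \bPi\circ\bW\Big],
\end{align}
so that $\bE$ collects the two mean-zero fluctuations: the masking noise $\bPi\circ\bX - \rho\bX$ (entries $(\Pi_{ij}-\rho)X_{ij}$) and the measurement noise $\bPi\circ\bW$. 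Since $\tfrac{\rho}{\hrho}\bX$ has singular values $\tfrac{\rho}{\hrho}s_k$, Weyl's inequality applied to $\btZ = \tfrac{\rho}{\hrho}\bX + \bE$ gives
\begin{align}
|\hs_k - s_k| &\le \Big|\hs_k - \tfrac{\rho}{\hrho}\,s_k\Big| + \Big|\tfrac{\rho}{\hrho} - 1\Big|\,s_k
\le \|\bE\|_2 + \frac{|\hrho - \rho|}{\hrho}\,s_k,
\end{align}
which cleanly splits the target bound into an additive noise term $\|\bE\|_2$ and a multiplicative term involving $s_k$.

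Next I would bound the two pieces separately, on a single high-probability event. For the multiplicative term, $\hrho$ is the empirical mean of $np$ independent $\mathrm{Bernoulli}(\rho)$ variables, so a Bernstein/Chernoff bound yields $|\hrho - \rho| \le C\sqrt{\rho\log(np)/(np)}$ and $\hrho \ge \rho/2$ simultaneously with probability $1 - O((np)^{-10})$; here the hypothesis $\rho \ge c\log^2(np)/(np)$ is what guarantees $\hrho$ stays bounded away from $0$. Combining these gives
\begin{align}
\frac{|\hrho - \rho|}{\hrho}\,s_k &\le \frac{2}{\rho}\cdot C\sqrt{\frac{\rho\log(np)}{np}}\,s_k = C\,\frac{\sqrt{\log(np)}}{\sqrt{\rho\,np}}\,s_k,
\end{align}
which is precisely the second term of \eqref{eq:lem.thm.1.3.0}. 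For the noise term, on the event $\hrho \ge \rho/2$ I have $\|\bE\|_2 \le \tfrac{2}{\rho}\big(\|\bPi\circ\bX - \rho\bX\|_2 + \|\bPi\circ\bW\|_2\big)$, and both operator norms are $\tO(\sqrt{n}+\sqrt{p})$ (up to $K,\gamma$ factors) by the subgaussian/masked matrix concentration already packaged in Lemma~\ref{lem.perturb.1}; dividing by $\rho$ delivers the first term $C(K,\gamma)(\sqrt{n}+\sqrt{p})/\rho$.

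The main obstacle is handling the \emph{random} normalization $1/\hrho$ rigorously: one must compare against $\tfrac{\rho}{\hrho}\bX$ (not $\bX$) to extract the sharp $s_k$-proportional term rather than a cruder $\|\bX\|_2 = s_1$ bound, and then verify that the concentration of $\hrho$, the lower bound $\hrho \ge \rho/2$, and the operator-norm bounds of Lemma~\ref{lem.perturb.1} all hold jointly with the claimed probability $1 - O((np)^{-10})$ via a union bound. Everything else is a routine application of Weyl's inequality and the stated concentration results.
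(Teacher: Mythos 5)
Your proposal is correct and follows essentially the same route as the paper: the decomposition $\btZ = \tfrac{\rho}{\hrho}\bX + \tfrac{1}{\hrho}(\bZ - \rho\bX)$ is algebraically identical to the paper's step $|\hs_k - s_k| \le \tfrac{1}{\hrho}|\tilde{s}_k - \rho s_k| + \tfrac{|\rho-\hrho|}{\hrho}s_k$, and both arguments then invoke Weyl's inequality, Lemma~\ref{lem.perturb.1} for $\|\bZ - \rho\bX\|_2$, and Lemma~\ref{lem:est.rho} for the concentration of $\hrho$ on a common high-probability event.
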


\begin{lemma}\label{lem:thm1.4}
Given $\btZ^r$, the following holds w.p. at least $1-O(1/(np)^{10})$ with respect to the randomness in $\bvarepsilon$:  
{\small
\begin{align}\label{eq:lem.thm.1.4.0}
\langle \btZ^r (\bhbeta - \btbeta^*), \bvarepsilon\rangle  & \leq 
\sigma^2 r + C \sigma \sqrt{\log(np)} \left({ \sigma} \sqrt{r} + \sigma \sqrt{\log(np)} + \|\btbeta^*\|_1 (\sqrt{n}   +  \| \btZ^r - \bX\|_{2,\infty} ) \right).
 \end{align}
 } 
\end{lemma}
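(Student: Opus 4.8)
The plan is to exploit the SVD structure of PCR to reduce the inner product, after conditioning on $\btZ^r$, to a fixed linear form plus a quadratic form in the response noise $\bvarepsilon$, and then to control each by standard subgaussian concentration. The crucial observation that makes this tractable is that $\btZ^r$ is measurable with respect to $\bZ$ and is therefore independent of $\bvarepsilon$; conditioning on it renders the projection onto $\bhU_r$ deterministic.

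First I would simplify PCR's in-sample fit. Writing $\bhbeta = \bhV_r \bhS_r^{-1} \bhU_r^T \by$ from \eqref{eq:PCR_model.threshold} and $\btZ^r = \bhU_r \bhS_r \bhV_r^T$, the orthonormality $\bhV_r^T \bhV_r = \Id$ yields the identity $\btZ^r \bhbeta = \bhU_r \bhU_r^T \by$, so PCR fits the projection of $\by$ onto the column space of $\bhU_r$. Since $\btZ^r \bbeta^*$ already lies in that column space, substituting $\by = \bX \bbeta^* + \bvarepsilon$ gives the decomposition
\begin{align}
\btZ^r(\bhbeta - \bbeta^*) = \bhU_r \bhU_r^T (\bX - \btZ^r)\bbeta^* + \bhU_r \bhU_r^T \bvarepsilon,
\end{align}
and hence
\begin{align}
\langle \btZ^r(\bhbeta - \bbeta^*), \bvarepsilon\rangle = \langle \bhU_r \bhU_r^T (\bX - \btZ^r)\bbeta^*, \bvarepsilon\rangle + \bvarepsilon^T \bhU_r \bhU_r^T \bvarepsilon.
\end{align}

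For the linear term, condition on $\btZ^r$ so that $\bg := \bhU_r \bhU_r^T (\bX - \btZ^r)\bbeta^*$ is a fixed vector. A Hoeffding-type tail bound for the mean-zero $\sigma$-subgaussian $\bvarepsilon$ then gives $|\langle \bg, \bvarepsilon\rangle| \le C\sigma \sqrt{\log(np)}\,\|\bg\|_2$ with probability at least $1 - O(1/(np)^{10})$. Since $\bhU_r \bhU_r^T$ is a contraction and $\|\bX\|_{2,\infty} \le \sqrt{n}$ under Assumption \ref{assumption:bounded}, I can bound $\|\bg\|_2 \le \|(\bX - \btZ^r)\bbeta^*\|_2$, and then, applying \eqref{eq:2.inf.ineq} together with $\|\btZ^r\|_{2,\infty} \le \sqrt{n} + \|\btZ^r - \bX\|_{2,\infty}$, obtain $\|\bg\|_2 \le (\sqrt{n} + \|\btZ^r - \bX\|_{2,\infty})\|\bbeta^*\|_1$; this reproduces the last term of \eqref{eq:lem.thm.1.4.0}.

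The quadratic term $\bvarepsilon^T \bhU_r \bhU_r^T \bvarepsilon = \|\bhU_r \bhU_r^T \bvarepsilon\|_2^2$ is where the main obstacle lies: because $\bhbeta$ is itself a function of $\by = \bX\bbeta^* + \bvarepsilon$, this dependence on $\bvarepsilon$ is genuine and cannot be removed by treating the fit as independent of the noise. The matrix $\bhU_r \bhU_r^T$ is, given $\btZ^r$, a fixed rank-$r$ orthogonal projection, so its conditional mean is $\sum_i (\bhU_r \bhU_r^T)_{ii}\,\Ex[\varepsilon_i^2] \le \sigma^2\,\tra(\bhU_r \bhU_r^T) = \sigma^2 r$, and I would control its fluctuation with the Hanson--Wright inequality using $\|\bhU_r \bhU_r^T\|_F^2 = r$ and $\|\bhU_r \bhU_r^T\|_2 = 1$. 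At the prescribed failure probability this contributes a deviation of order $\sigma^2 \sqrt{r\log(np)} + \sigma^2 \log(np)$, i.e. $C\sigma\sqrt{\log(np)}\,(\sigma\sqrt{r} + \sigma\sqrt{\log(np)})$. Taking a union bound over the two concentration events and summing the bounds yields \eqref{eq:lem.thm.1.4.0}.
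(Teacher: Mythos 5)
Your proposal is correct and follows essentially the same route as the paper: both use the identity $\btZ^r\bhbeta = \bhU_r\bhU_r^T\by$ to split the inner product into linear and quadratic forms in $\bvarepsilon$, control the linear part via a conditional Hoeffding bound with $\|\bX\bbeta^*\|_2 \le \sqrt{n}\|\bbeta^*\|_1$ and $\|(\btZ^r-\bX)\bbeta^*\|_2 \le \|\btZ^r-\bX\|_{2,\infty}\|\bbeta^*\|_1$, and control the quadratic part via the trace bound $\sigma^2 r$ plus Hanson--Wright with $\|\bhU_r\bhU_r^T\|_F^2 = r$. The only cosmetic difference is that you merge the two linear terms into the single vector $\bhU_r\bhU_r^T(\bX-\btZ^r)\bbeta^*$ where the paper bounds $\langle\bhU_r\bhU_r^T\bX\bbeta^*,\bvarepsilon\rangle$ and $\langle\btZ^r\bbeta^*,\bvarepsilon\rangle$ separately; both yield the stated bound.
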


\vspace{10pt} \noindent 
{\em Completing the proof of Theorem \ref{thm:param_est}. } 
Using Lemma \ref{lem:thm1.4}, the following holds w.p. at least
$1-O(1/(np)^{10})$:
\begin{align}\label{eq:thm1.f.1}
& \| \bX - \btZ^r \|_{2,\infty}^2 \|\btbeta^*\|_1^2 +\langle \btZ^r (\bhbeta - \btbeta^*), \bvarepsilon\rangle \nonumber \\
&\quad \leq  \| \bX - \btZ^r \|_{2,\infty}^2 \|\btbeta^*\|_1^2 
+ C \sigma \sqrt{\log(np)} \| \bX - \btZ^r \|_{2,\infty} \|\btbeta^*\|_1 + C \sigma^2 \log(np) \nonumber \\
&\quad \qquad  + C \sigma \sqrt{\log(np)} (\sqrt{n} \|\btbeta^*\|_1 + {s \sigma}\sqrt{r})
+ \sigma^2 r \nonumber \\
& \quad\leq C \big(\| \bX - \btZ^k \|_{2,\infty} \|\btbeta^*\|_1 + \sigma \sqrt{\log(np)}\big)^2 + C \sigma \sqrt{\log(np)} (\sqrt{n} \|\btbeta^*\|_1 +{ \sigma} \sqrt{r})
+ \sigma^2 r \nonumber \\
&\quad \leq C \| \bX - \btZ^k \|_{2,\infty}^2  \|\btbeta^*\|_1^2 +  C \sigma^2 (\log(np) + r) + C \sigma \sqrt{n \log(np)}  \|\btbeta^*\|_1.
\end{align}
Using \eqref{eq:thm1.4} and \eqref{eq:thm1.f.1}, 
we have w.p. at least $1-O(1/(np)^{10})$,
\begin{align}
\| \bhbeta - \btbeta^*\|_2^2 
& \leq \| \bV \bV^T - \bhV_{k} \bhV_{k}^T\|_2^2 ~\|\btbeta^*\|_2^2  
+ C \frac{\| \bX - \btZ^k \|_{2,\infty}^2} {\hs_r^2} \|\btbeta^*\|_1^2
\\
&\quad 
+ C \frac{\sigma^2 (\log(np) + r)} {\hs_r^2}
+ C \frac{\sigma \sqrt{n \log(np)}} {\hs_r^2} \|\btbeta^*\|_1. 
\label{eq:thm1.f.2}
\end{align}
Using Lemma \ref{lem:thm1.1} in \eqref{eq:thm1.f.2}, we have w.p. at least $1-O(1/(np)^{10})$,
\begin{align}
\| \bhbeta - \btbeta^*\|_2^2 
& \leq  
C(K+1)^2(\gamma+1)^2 \frac{n + p }{\rho^2 s_r^2} \|\btbeta^*\|_2^2  + C \frac{\| \bX - \btZ^k \|_{2,\infty}^2 }{\hs_r^2} \|\btbeta^*\|_1^2
\\ &\quad 
+ C \frac{\sigma^2 (\log(np) + r)} {\hs_r^2}
+ C \frac{\sigma \sqrt{n \log(np)}} {\hs_r^2} \|\btbeta^*\|_1. 
\label{eq:thm1.f.3}
\end{align}
Applying Lemma \ref{lem:thm1.3} with $k =r$ and recalling  
$\rho \geq c (np)^{-1} \log^2 np$ and $\snr \geq C(K+1)(\gamma+1)$, 
\begin{align}
\frac{| \hs_r - s_r |}{s_r} & \leq C(K+1)(\gamma+1) \frac{\sqrt{n} + \sqrt{p}}{\rho s_r} + C \frac{\sqrt{\log(np)}}{\sqrt{\rho \,np}} \nonumber \\
& = \frac{C(K+1)(\gamma+1)}{\snr} + C \frac{\sqrt{\log(np)}}{\sqrt{\rho \,np}}
~\leq~ \frac{1}{2}. \label{eq:snr.simplify} 
\end{align}
%for appropriately large enough constant $c > 0$. 
%
As a result, 
\begin{align}\label{eq:singualr_val_emp_pop}
    s_r/2 \leq \hs_r \leq 3s_r/2.
\end{align}
Using the definition of $\snr$ as per \eqref{eq:snr} and $\sqrt{a + b} \le \sqrt{a} + \sqrt{b}$ for $a, b \ge 0$, we have  
\begin{align}\label{eq:snr_basic_ineq}
    \frac{n + p }{\rho^2 s_r^2} & \leq \frac{1}{\snr^2}.   
\end{align}
Using \eqref{eq:singualr_val_emp_pop}, \eqref{eq:snr_basic_ineq} we obtain 
\begin{align}
\frac{ \sigma^2 (\log(np) + r)}{\hs_r^2} ~&\leq~ C\frac{ \sigma^2 \rho^2 r \log(np)}{\snr^2 (n + p) } \le C\frac{ \sigma^2  r \log(np)}{\snr^2 (n \vee p) }, \label{eq:thm1.f.1x} \\
\frac{\sigma \sqrt{n \log(np)}}{\hs_r^2} \|\btbeta^*\|_1 
~&\leq C \frac{ \sigma \rho^2  \sqrt{n  \log(np)}}{\snr^2 (n + p)} \|\btbeta^*\|_1 \le C \frac{ \sigma \sqrt{n  \log(np)}}{\snr^2 (n \vee p)} \|\btbeta^*\|_1,  \label{eq:thm1.f.2x} \\
\frac{(n+p)(n +\sqrt{n}\, \log(np))}{\rho^4 s_r^2 \hs_r^2} \|\btbeta^*\|^2_1
& \leq C \frac{ n \log(np)}{\snr^4 (n + p)} \|\btbeta^*\|^2_1 \le C \frac{  \log(np)}{\snr^4} \|\btbeta^*\|^2_1, \label{eq:thm1.f.3x} \\
\frac{r + \sqrt{r}\, \log(np)}{\rho^2 \hs_r^2} \|\btbeta^*\|^2_1 & \leq 
C\frac{ r \log(np)}{\snr^2 (n + p)}  \|\btbeta^*\|^2_1,  \label{eq:thm1.f.4x} \\
\frac{\log(np)}{\rho \hs_r^2 p} \|\btbeta^*\|^2_1 & \leq C \frac{\log(np)}{\snr^2 p (n + p)} \|\btbeta^*\|^2_1. \label{eq:thm1.f.5x}
\end{align}
Plugging Lemma~\ref{lem:thm1.2}, \eqref{eq:thm1.f.1x}, \eqref{eq:thm1.f.2x}, \eqref{eq:thm1.f.3x}, \eqref{eq:thm1.f.4x}, \eqref{eq:thm1.f.5x} into \eqref{eq:thm1.f.3},  and simplifying completes the proof of \eqref{eq:thm1.main} in Theorem \ref{thm:param_est}.

It remains to establish \eqref{eq:bound_linear_coeff}. 
This result is first proved in \cite{synth_combo}, cf. Lemma 19; we state a similar proof for completeness. 
By definition, $\btbeta^* = \bX^\dagger \bX \bbeta^*$. 
As such, it immediately follows that 
\begin{align}
	\| \btbeta^* \|_2 
	&= \| \bX^\dagger \bX \bbeta^* \|_2
	\le \| \bX^\dagger \|_2 \cdot \| \bX \bbeta^* \|_2
	\le s_r^{-1} \cdot d \sqrt{n}. 
\end{align}
The last inequality follows from our boundedness assumption on $\langle \bx_i, \bbeta^* \rangle$ for all $i \le n$. 

Finally, the second part of \eqref{eq:bound_linear_coeff} follows from the property $\| \bv \|_1 \le \sqrt{p} \| \bv \|_2$ for any $\bv \in \Rb^p$. 

%%
%This result is proved in \cite{synth_combo} (cf Lemma 19). We re-state the proof here for completeness. 
%%
%Note by definition that $\btbeta^*$ is the solution to the following optimization program.
%%
%\begin{align}\label{eq:optimization_view}
%%
%\min_{\beta \in \Rb^{p}} \| \beta \|_2 
%%
%\\ \text{s. t.} \Ex[\by] = \bX \beta
%\end{align}
%%
%Denote the SVD of $\bX = \bU \bS \bV^{T}$, where $\bU \in \Rb^{n \times r}, \bV \in \Rb^{p \times r}$ correspond to the left and right singular vectors of $\bX$, respectively; $\bS \in \Rb^{r \times r}$ is a diagonal matrix containing the $s_1, \dots, s_r$ singular values of $\bX$, ordered by magnitude.
%%
%One feasible $\bbeta$ that satisfies \eqref{eq:optimization_view} is $\bV \bS^{\dagger} \bU^{T} \Ex[\by]$, where $\dagger$ refers to the pseudo-inverse. 
%%
%To see this, note that
%%
%\begin{align}
%%
%\bX (\bV \bS^{\dagger} \bU^{T} \Ex[\by]) = \bU \bS \bV^{T} \bV \bS^{\dagger} \bU^{T} \Ex[\by] = \sum^{r}_{i = 1} u_i u^{T}_i \Ex[\by] = \Ex[\by],
%%
%\end{align} 
%%
%where $u_i$ is the $i$-th left singular vector of $\bX$.
%%
%The last equality follows since $\Ex[\by]$ lies in the column space of $\bX$ be definition.
%%
%Then we have that 
%\begin{align}
%%
%\| \btbeta^* \|_2 \le \| \bbeta \|_2 = \|\bV \bS^{\dagger} \bU^{T} \Ex[\by]\|_2 \le \|\bS^{\dagger}\|_2 \| \Ex[\by]\|_2 \le \frac{b \sqrt{n}}{s_r}.
%%
%\end{align}
%%
%In the last equality, we have used $\Ex[\by] \in [-b, b]$.

%\section{Proofs of Key Lemmas for Theorem \ref{thm:param_est}} 
%In this section, we prove the key lemmas used to establish Theorem \ref{thm:param_est}. 

\subsection{Proof of Lemma \ref{lem:thm1.1}}\label{ssec:thm1.lem1} 
Recall that $\bU, \bV$ denote the left and right singular vectors of $\bX$ (equivalently, $\rho \bX$), respectively;
meanwhile, $\bhU_k, \bhV_k$ denote the top $k$ left and right singular vectors of $\btZ$ (equivalently, $\bZ$), respectively.  
Further, observe that $\Ex[\bZ] = \rho \bX$ and let $\btW = \bZ - \rho \bX$. 
To arrive at our result, we recall Wedin's Theorem \citep{Wedin1972PerturbationBI}. 

\vspace{5pt}
\begin{thm} [Wedin's Theorem] \label{thm:wedin}
Given $\bA, \bB \in \Rb^{n \times p}$, let $\bA = \bU \bS \bV^T$ and $\bB = \bhU \bhS \bhV^T$ be their respective SVDs. 
Let $\bU_k, \bV_k$ (respectively, $\bhU_k, \bhV_k$) correspond to the truncation of $\bU, \bV$ (respectively, $\bhU, \bhV$) that retains the columns corresponding to the top $k$ singular values of $\bA$ (respectively, $\bB$). 
Let $s_k$ denote the $k$-th singular value of $\bA$. 
Then, 
\begin{align}
	\max \Big(\| \bU_k \bU_k^T - \bhU_k \bhU_k^T\|_2,  \| \bV_k \bV_k^T - \bhV_k \bhV_k^T\|_2 \Big) & \leq \frac{2\norm{\bA - \bB}_2}{ s_k - s_{k+1}}.
\end{align}
\end{thm}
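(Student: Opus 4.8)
The plan is to reduce the two subspace-perturbation quantities to sines of principal angles and then to derive a coupled pair of Sylvester-type identities that expose the spectral gap. Write $\bE = \bB - \bA$ and partition the SVD of $\bA$ as $\bA = [\bU_k, \bU_{k,\perp}]\,\diag(\bS_k, \bS_\perp)\,[\bV_k, \bV_{k,\perp}]^T$, where $\bS_\perp$ collects the singular values $s_{k+1}, s_{k+2}, \dots$ so that $\|\bS_\perp\|_2 = s_{k+1}$. Since $\bU_k$ and $\bhU_k$ span subspaces of equal dimension $k$, a standard principal-angle identity gives $\|\bU_k \bU_k^T - \bhU_k \bhU_k^T\|_2 = \|\bU_{k,\perp}^T \bhU_k\|_2$, and likewise for $\bV$. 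Denoting $X = \bU_{k,\perp}^T \bhU_k$ and $Y = \bV_{k,\perp}^T \bhV_k$, each of operator norm at most $1$, it therefore suffices to bound $\max(\|X\|_2, \|Y\|_2)$.

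First I would dispose of the easy regime (and may assume $s_k > s_{k+1}$, as otherwise the bound is vacuous). If $\|\bE\|_2 \ge (s_k - s_{k+1})/2$, then the claimed right-hand side is at least $1$; since a difference of two orthogonal projectors of equal rank has operator norm at most $1$, the inequality holds trivially. Hence assume henceforth $\|\bE\|_2 < (s_k - s_{k+1})/2$.

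Next I would extract the gap. From $\bB \bhV_k = \bhU_k \bhS_k$ together with the block identity $\bU_{k,\perp}^T \bA = \bS_\perp \bV_{k,\perp}^T$, substituting $\bB = \bA + \bE$ yields $X \bhS_k - \bS_\perp Y = \bU_{k,\perp}^T \bE \bhV_k$. Running the same computation on $\bB^T \bhU_k = \bhV_k \bhS_k$, using $\bV_{k,\perp}^T \bA^T = \bS_\perp \bU_{k,\perp}^T$, gives the companion identity $Y \bhS_k - \bS_\perp X = \bV_{k,\perp}^T \bE^T \bhU_k$. Writing $a = \|X\|_2$, $b = \|Y\|_2$, and $f = \max(\|\bU_{k,\perp}^T \bE \bhV_k\|_2, \|\bV_{k,\perp}^T \bE^T \bhU_k\|_2) \le \|\bE\|_2$, and using that $\bhS_k$ is invertible so that $\|X \bhS_k\|_2 \ge \sigma_{\min}(\bhS_k)\,\|X\|_2 = \hat s_k\, a$, the two identities give $\hat s_k\, a \le s_{k+1} b + f$ and $\hat s_k\, b \le s_{k+1} a + f$. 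Taking WLOG $a \ge b$ and combining gives $(\hat s_k - s_{k+1})\, a \le f$, i.e. $\max(a, b) \le \|\bE\|_2 /(\hat s_k - s_{k+1})$.

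Finally I would convert the empirical gap to the population gap by Weyl's inequality, $\hat s_k \ge s_k - \|\bE\|_2$, so that in the regime under consideration $\hat s_k - s_{k+1} \ge (s_k - s_{k+1}) - \|\bE\|_2 > (s_k - s_{k+1})/2$; substituting gives $\max(\|X\|_2, \|Y\|_2) \le 2\|\bE\|_2 /(s_k - s_{k+1})$, which is exactly the claim. The main obstacle is the third step: the left- and right-singular subspaces are coupled, so neither $X$ nor $Y$ can be controlled in isolation, and the crux is to notice that comparing the two Sylvester identities lets the two appearances of $\bS_\perp$ cancel against one another and isolates the gap $\hat s_k - s_{k+1}$; obtaining the constant exactly $2$ (rather than something larger) then rests on the clean WLOG reduction together with the Weyl-based halving of the gap. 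An alternative route would pass through the Hermitian dilation $\bigl(\begin{smallmatrix} 0 & \bA \\ \bA^T & 0 \end{smallmatrix}\bigr)$ and invoke the Davis--Kahan $\sin\Theta$ theorem, but I expect the direct Sylvester argument above to be shorter and to track the constant more transparently.
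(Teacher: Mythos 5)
The paper offers no proof of Theorem \ref{thm:wedin} at all: it is recalled as a known perturbation bound with a citation to \cite{Wedin1972PerturbationBI}, and is invoked as a black box in the proof of Lemma \ref{lem:thm1.1}. Your proposal therefore supplies something the paper does not contain, and it is correct. The equal-rank principal-angle identity $\norm{\bU_k\bU_k^T - \bhU_k\bhU_k^T}_2 = \norm{\bU_{k,\perp}^T\bhU_k}_2$ is legitimate provided $\bU_{k,\perp}$ spans the full orthogonal complement of $\mathrm{span}(\bU_k)$ in $\Rb^n$, i.e., you work with the full rather than economy SVD so that $\bS_\perp$ is rectangular diagonal with $\norm{\bS_\perp}_2 = s_{k+1}$; your partition is consistent with this. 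The trivial regime is correctly dismissed via $\norm{P-Q}_2 \le 1$ for orthogonal projectors. The two coupled identities obtained by hitting $\bB\bhV_k = \bhU_k\bhS_k$ with $\bU_{k,\perp}^T$ and $\bB^T\bhU_k = \bhV_k\bhS_k$ with $\bV_{k,\perp}^T$ are exactly right (modulo a dropped transpose, $\bS_\perp^T X$ rather than $\bS_\perp X$, in the second identity---harmless since only $\norm{\bS_\perp}_2 = s_{k+1}$ enters the estimate); the needed invertibility of $\bhS_k$ in the nontrivial regime follows from Weyl, $\hs_k \ge s_k - \norm{\bE}_2 > (s_k+s_{k+1})/2 > 0$, so $\norm{X\bhS_k}_2 \ge \hs_k \norm{X}_2$ is valid; the symmetrized WLOG step yields $(\hs_k - s_{k+1})\max(\norm{X}_2,\norm{Y}_2) \le \norm{\bE}_2$ (and is sharper than naively summing the two inequalities, which would lose another factor of $2$); and the final Weyl halving of the gap produces exactly the stated constant. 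The coupling of left and right singular subspaces through the pair of Sylvester-type identities is precisely what distinguishes Wedin's theorem from Davis--Kahan, and your argument is in the spirit of Wedin's original one; what it buys over the paper's approach is a self-contained, elementary derivation that tracks the explicit constant, whereas the paper simply outsources the result to the literature.
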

Using Theorem \ref{thm:wedin} for $k = r$, it follows that 
\begin{align}\label{eq:thm1.lem1.1}
\max\Big( \| \bU \bU^T - \bhU_r \bhU_r^T\|_2, \| \bV \bV^T - \bhV_r \bhV_r^T\|_2\Big) & \leq \frac{2 \| \btW \|_2}{\rho s_r}, 
\end{align}
where $s_r$ is the smallest nonzero singular value of $\bX$.  Next, we obtain a high probability bound on $\| \btW\|_2$. 
To that end, 
\begin{align}\label{eq:thm1.lem1.2}
\frac1n \| \btW\|^2_2 & = \frac1n \| \btW^T \btW\|_2 \leq \frac1n \|\btW^T \btW - \Ex[\btW^T \btW]\|_2 +\frac1n \| \Ex[\btW^T \btW]\|_2.
\end{align}
We bound the two terms in \eqref{eq:thm1.lem1.2} separately. 
We recall the following lemma, which is a direct extension of Theorem 4.6.1 of \cite{vershynin2018high} for the non-isotropic setting, and we present its proof for completeness in Section \ref{ssec:matrix.conc}. 

\vspace{5pt}
\begin{lemma} [Independent sub-gaussian rows] \label{lem:subg_matrix} 
Let $\bA$ be an $n \times p$ matrix whose rows $A_i$ are independent, mean zero, sub-gaussian random vectors in $\Rb^p$ with second moment matrix $\bSigma = (1/n) \Ex [ \bA^T \bA]$. 
Then for any $t \ge 0$, the following holds w.p. at least $1 - \exp(-t^2)$: 
\begin{align}\label{eq:matrix.conc}
	\| \frac{1}{n} \bA^T \bA - \bSigma \|_2 & \le K^2 \max(\delta, \delta^2), \quad \text{where } \delta = C \sqrt{\frac{p}{n}} + \frac{t}{\sqrt{n}};
\end{align}
here, $K = \max_i \norm{ A_i }_{\psi_2}$. 
\end{lemma}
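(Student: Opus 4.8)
The plan is to follow the standard $\epsilon$-net argument for covariance estimation (Theorem 4.6.1 of \cite{vershynin2018high}), with the observation that isotropy is never actually used once the one-dimensional marginals are controlled. First I would reduce the operator norm to a finite maximum: since $\frac{1}{n}\bA^T\bA - \bSigma$ is symmetric, for any $\frac14$-net $\Nc$ of the unit sphere $S^{p-1}$ we have
\[
\Big\| \tfrac{1}{n}\bA^T\bA - \bSigma \Big\|_2 \le 2 \max_{x \in \Nc} \Big| \Big\langle \big(\tfrac{1}{n}\bA^T\bA - \bSigma\big) x,\, x\Big\rangle \Big|,
\]
and such a net can be chosen with $|\Nc| \le 9^p$.

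Next, I would fix $x \in S^{p-1}$ and analyze the scalar deviation. Writing $\langle \tfrac{1}{n}\bA^T\bA\, x, x\rangle = \tfrac{1}{n}\sum_{i=1}^n \langle A_i, x\rangle^2$ and $\langle \bSigma x, x\rangle = \tfrac{1}{n}\sum_{i=1}^n \Ex[\langle A_i, x\rangle^2]$, the object of interest is an average of independent, centered variables $\langle A_i, x\rangle^2 - \Ex[\langle A_i, x\rangle^2]$. The crucial point---and where the non-isotropic generalization costs nothing---is that $\langle A_i, x\rangle$ is sub-gaussian with $\|\langle A_i, x\rangle\|_{\psi_2} \le K$ irrespective of the row covariances, so its square is sub-exponential with $\|\langle A_i, x\rangle^2 - \Ex[\langle A_i, x\rangle^2]\|_{\psi_1} \le C K^2$. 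Bernstein's inequality for sums of independent sub-exponentials then gives, for every $u \ge 0$,
\[
\Prob{\Big| \tfrac{1}{n}\sum_{i=1}^n \big(\langle A_i, x\rangle^2 - \Ex[\langle A_i, x\rangle^2]\big) \Big| \ge u} \le 2\exp\Big(-c\, n \min\Big(\tfrac{u^2}{K^4}, \tfrac{u}{K^2}\Big)\Big).
\]

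The key bookkeeping step is to set $u = \tfrac{1}{2} K^2 \max(\delta, \delta^2)$; then, whether $\delta \le 1$ or $\delta > 1$, one checks that the Bernstein min-term $\min(u^2/K^4,\, u/K^2)$ collapses to a constant multiple of $\delta^2$, yielding a uniform tail bound $2\exp(-c' n \delta^2)$. I would then union-bound over the $9^p$ net points and substitute $\delta = C\sqrt{p/n} + t/\sqrt{n}$, chosen so that $c' n \delta^2 \ge p \log 9 + t^2 + \log 2$; combined with the factor-of-two net reduction, this delivers the claimed inequality with probability at least $1 - \exp(-t^2)$. I expect the main obstacle to be organizational rather than conceptual: carefully tracking the constants through the min-term so that the regimes $\delta \le 1$ and $\delta > 1$ both land on the clean $\max(\delta, \delta^2)$ form, and verifying that the net cardinality $9^p$ is absorbed by the choice of $\delta$. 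The non-isotropic setting introduces no genuine difficulty, since all randomness enters only through the one-dimensional projections $\langle A_i, x\rangle$, whose $\psi_2$-norm is bounded by $K$ by hypothesis.
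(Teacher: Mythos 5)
Your proposal is correct and follows essentially the same route as the paper's proof: reduce the operator norm to a maximum over a $1/4$-net of cardinality $9^p$, control each fixed direction via Bernstein's inequality applied to the centered sub-exponential variables $\langle A_i, x\rangle^2 - \Ex[\langle A_i, x\rangle^2]$ (noting that $\|\langle A_i,x\rangle\|_{\psi_2} \le K$ holds without isotropy), and absorb the union bound into the choice of $\delta$. No gaps; the bookkeeping with $\max(\delta,\delta^2)$ and the net cardinality is exactly how the paper closes the argument.
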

The matrix $\btW = \bZ - \rho \bX$ has independent rows by Assumption \ref{assumption:cov_noise}. 
We state the following Lemma about the distribution property of the rows of $\btW$, the proof of which can be found in Section \ref{ssec:subG.z}. 

\vspace{5pt}
\begin{lemma}\label{lem:subG.z}
Let Assumption \ref{assumption:cov_noise} hold. 
Then, $\bz_i - \rho \bx_i$ is a sequence of independent, mean zero, sub-gaussian random vectors satisfying $\| \bz_i - \rho \bx_i \|_{\psi_2} \leq C(K+1)$.
\end{lemma}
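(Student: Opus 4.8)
The plan is to establish the three asserted properties---independence across $i$, mean-zero, and the subgaussian bound---in turn, with only the last being substantive. Independence is immediate: $\bz_i - \rho\bx_i$ is a measurable function of the pair $(\bw_i, \bpi_i)$, and these pairs are mutually independent across $i$ by Assumption~\ref{assumption:cov_noise} together with the stated mutual independence of $\bvarepsilon_i, \bw_i, \bpi_i$. For the mean, I would use that $\bpi_i$ is independent of $\bw_i$ with $\Ex[\pi_{ij}] = \rho$ and $\Ex[w_{ij}] = 0$, so that $\Ex[z_{ij}] = \Ex[(x_{ij}+w_{ij})\pi_{ij}] = (x_{ij}+0)\rho = \rho x_{ij}$, giving $\Ex[\bz_i - \rho\bx_i] = \bzero$.

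The heart of the argument is the subgaussian bound. First I would record the pointwise identity $z_{ij} - \rho x_{ij} = x_{ij}(\pi_{ij}-\rho) + w_{ij}\pi_{ij}$, i.e. the decomposition $\bz_i - \rho\bx_i = \bx_i\circ(\bpi_i - \rho\bone) + \bw_i\circ\bpi_i$ into a \emph{mask-fluctuation} term and a \emph{noise} term. Fixing a unit vector $\bu$ and applying the triangle inequality to $\norm{\langle \bz_i - \rho\bx_i, \bu\rangle}_{\psi_2}$ over these two pieces, the vector subgaussian norm is recovered by taking the supremum over $\bu$ at the end. For the mask-fluctuation term, $\langle \bx_i\circ(\bpi_i - \rho\bone), \bu\rangle = \sum_j u_j x_{ij}(\pi_{ij}-\rho)$ is a weighted sum of independent, mean-zero, bounded (hence subgaussian with absolute-constant Orlicz norm) variables $\pi_{ij}-\rho$ with coefficients $u_j x_{ij}$. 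Since $\abs{x_{ij}}\le 1$ by Assumption~\ref{assumption:bounded} and $\norm{\bu}_2 = 1$, the standard bound for sums of independent subgaussians gives $\norm{\sum_j u_j x_{ij}(\pi_{ij}-\rho)}_{\psi_2}^2 \le C\sum_j u_j^2 x_{ij}^2 \le C$, an absolute constant.

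The noise term is the main obstacle, because it is a product of two random factors $\bw_i$ and $\bpi_i$ rather than a linear form in a single subgaussian vector. Here the key device is to condition on $\bpi_i$: under the conditional law given $\bpi_i$, the vector $\bu\circ\bpi_i$ is deterministic with $\norm{\bu\circ\bpi_i}_2 \le \norm{\bu}_2 = 1$, so the hypothesis $\norm{\bw_i}_{\psi_2}\le K$ yields the conditional moment generating function bound $\Ex[\exp(\lambda\langle\bw_i,\bu\circ\bpi_i\rangle)\mid\bpi_i]\le\exp(CK^2\lambda^2)$, holding uniformly over the realization of $\bpi_i$ since multiplication by the $\{0,1\}$-mask only shrinks the $\ell_2$-norm of the test vector. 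Integrating over $\bpi_i$ preserves this bound, and as the term is mean zero, the standard moment-generating-function characterization of subgaussianity (e.g. the equivalence in \cite{vershynin2018high}) returns $\norm{\langle\bw_i\circ\bpi_i,\bu\rangle}_{\psi_2}\le C K$. Combining the two pieces gives $\norm{\langle\bz_i-\rho\bx_i,\bu\rangle}_{\psi_2}\le C(K+1)$, and taking the supremum over unit $\bu$ completes the proof.
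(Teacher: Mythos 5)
Your proof is correct and follows essentially the same route as the paper: the same decomposition $\bz_i - \rho\bx_i = \bx_i\circ(\bpi_i-\rho\bone) + \bw_i\circ\bpi_i$, the same treatment of the bounded mask-fluctuation term via Assumption~\ref{assumption:bounded}, and the same key device for the noise term of conditioning on the mask and using that it can only shrink the $\ell_2$-norm of the test vector (the paper packages this as its Lemma~\ref{lem:bound.psi2}, stated via the Orlicz-norm definition rather than your MGF characterization, but the argument is the same).
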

From Lemmas \ref{lem:subg_matrix} and \ref{lem:subG.z}, w.p. at least $1 - \exp(-t^2)$, %and large enough constant $C > 0$, 
\begin{align}\label{eq:thm1.lem1.3}
\frac1n \|\btW^T \btW - \Ex[\btW^T \btW]\|_2 & \leq C(K+1)^2 \left(1 + \frac{p}{n} + \frac{t^2}{n}\right).
\end{align}
Finally, we claim the following bound on $\| \Ex[\btW^T \btW]\|_2$, the proof of which is in Section \ref{ssec:lem:bound.cov}.

\vspace{5pt}
\begin{lemma}\label{lem:bound.cov}
Let Assumption \ref{assumption:cov_noise} hold.
Then, we have
\begin{align}
\| \Ex[\btW^T \btW]\|_2 & \leq C(K+1)^2 n (\rho - \rho^2) + n \rho^2 \gamma^2.
\end{align}
\end{lemma}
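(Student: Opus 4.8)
The plan is to exploit the independence of the rows of $\btW = \bZ - \rho\bX$ to reduce the operator-norm bound to a per-row second-moment computation, and then read off a clean matrix decomposition of that second moment. By Assumption~\ref{assumption:cov_noise} (cf.\ Lemma~\ref{lem:subG.z}), the rows $\tilde{\bw}_i = \bz_i - \rho\bx_i$ are independent and mean zero, so
\[
\Ex[\btW^T\btW] = \sum_{i=1}^n \Ex[\tilde{\bw}_i \otimes \tilde{\bw}_i].
\]
First I would expand each coordinate. Since $z_{ij} = (x_{ij} + w_{ij})\pi_{ij}$ and $\Ex[\pi_{ij}] = \rho$, we get $\tilde{w}_{ij} = x_{ij}(\pi_{ij} - \rho) + w_{ij}\pi_{ij}$, which splits $\tilde{\bw}_i$ into a ``mask fluctuation'' term and a ``masked noise'' term.

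Next I would compute the entrywise second moments $\Ex[\tilde{w}_{ij}\tilde{w}_{ik}]$ using the mutual independence of $\bw_i$, $\bpi_i$ and the coordinatewise independence of the Bernoulli mask. The two cross terms vanish because $\Ex[w_{ij}] = 0$ and $\bw_i \perp \bpi_i$. For the surviving terms, $\Ex[(\pi_{ij}-\rho)(\pi_{ik}-\rho)] = (\rho-\rho^2)\delta_{jk}$ and $\Ex[\pi_{ij}\pi_{ik}] = \rho^2 + (\rho-\rho^2)\delta_{jk}$. Collecting terms yields the decomposition
\[
\Ex[\tilde{\bw}_i \otimes \tilde{\bw}_i] = \rho^2\,\Ex[\bw_i \otimes \bw_i] + (\rho - \rho^2)\,\bD_i,
\]
where $\bD_i$ is diagonal with $(\bD_i)_{jj} = x_{ij}^2 + \Ex[w_{ij}^2]$. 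The key structural point is that the off-diagonal entries inherit only the $\rho^2$-scaled noise covariance, precisely because the mask is independent across coordinates.

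Finally, summing over $i$ and applying the triangle inequality gives $\|\Ex[\btW^T\btW]\|_2 \le \rho^2\,\|\sum_i \Ex[\bw_i\otimes\bw_i]\|_2 + (\rho-\rho^2)\,\|\sum_i\bD_i\|_2$. The first term is at most $n\rho^2\gamma^2$ by subadditivity of the operator norm and the bound $\|\Ex[\bw_i\otimes\bw_i]\|_2 \le \gamma^2$ from Assumption~\ref{assumption:cov_noise}. For the second, the operator norm of a diagonal matrix equals its largest entry; using $\|\bX\|_{\max}\le 1$ and $\Ex[w_{ij}^2]\le C\|\bw_i\|_{\psi_2}^2 \le CK^2$ gives $(\bD_i)_{jj}\le C(K+1)^2$, so $\|\sum_i\bD_i\|_2 \le C(K+1)^2 n$. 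Combining the two bounds yields the claim. The main obstacle is purely bookkeeping: correctly tracking the diagonal-versus-off-diagonal split produced by the Bernoulli masking so that the factors $\rho^2$ and $\rho-\rho^2$ land on the correct components; the probabilistic content is elementary once the cross terms are seen to vanish.
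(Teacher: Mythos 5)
Your proposal is correct and follows essentially the same route as the paper: both arrive at the decomposition $\Ex[\tilde{\bw}_i\otimes\tilde{\bw}_i]=\rho^2\,\Ex[\bw_i\otimes\bw_i]+(\rho-\rho^2)\bigl(\diag(\bx_i\otimes\bx_i)+\diag(\Ex[\bw_i\otimes\bw_i])\bigr)$ and then bound the diagonal part by $C(K+1)^2$ per entry (via $\|\bX\|_{\max}\le 1$ and the sub-gaussian norm bound) and the noise covariance by $\gamma^2$. The only cosmetic difference is that you verify the decomposition entrywise while the paper writes it directly at the matrix level; the content is identical.
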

From \eqref{eq:thm1.lem1.2}, \eqref{eq:thm1.lem1.3} and Lemma \ref{lem:bound.cov}, we have w.p. at least $1-\exp(-t^2)$ for any $t > 0$
\begin{align}
\| \btW\|_2^2 & \leq C(K+1)^2 (n + p + t^2) + n (\rho(1-\rho)(K+1)^2 + \rho^2 \gamma^2). 
\end{align}
For this, we conclude the following lemma.

\vspace{5pt}
\begin{lemma}\label{lem.perturb.1}
For any $t > 0$, the following holds w.p. at least $1- \exp(-t^2)$: 
\begin{align}\label{eq:thm1.lem1.3.2}
\| \bZ - \rho \bX \|_2 & \leq 
%C(K, \gamma)
C(K + 1) ( \gamma + 1)  (\sqrt{n} + \sqrt{p} + t).
%C_2 (1+K) 
%{\color{red} \sqrt{(1+K^2 + \gamma^2)}} (\sqrt{n} + \sqrt{p} + t).
%
\end{align}
\end{lemma}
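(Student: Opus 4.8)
The plan is to obtain this operator-norm bound by simply taking the square root of the second-moment estimate for $\btW = \bZ - \rho\bX$ that has just been assembled from Lemmas~\ref{lem:subg_matrix}, \ref{lem:subG.z}, and \ref{lem:bound.cov}. Recall that on the event of probability at least $1-\exp(-t^2)$ furnished by Lemma~\ref{lem:subg_matrix}, those ingredients combine to give
\begin{align}
\| \btW\|_2^2 \leq C(K+1)^2 (n + p + t^2) + n \big(\rho(1-\rho)(K+1)^2 + \rho^2 \gamma^2\big).
\end{align}
So the only remaining work is to clean up constants and pass to the square root.

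First I would control the second, deterministic, summand. Since $\rho \in (0,1]$ by Assumption~\ref{assumption:cov_noise}, we have $\rho(1-\rho) \leq 1/4$ and $\rho^2 \leq 1$, so that
\begin{align}
n\big(\rho(1-\rho)(K+1)^2 + \rho^2 \gamma^2\big) \leq n\big(\tfrac14 (K+1)^2 + \gamma^2\big) \leq C(K,\gamma)\, n.
\end{align}
Absorbing this into the first term (which already carries an $n$ contribution) yields $\|\btW\|_2^2 \leq C(K,\gamma)(n + p + t^2)$ on the same event, where the constant now depends only on $K$ and $\gamma$ after swallowing the $(K+1)^2$ and $\gamma^2$ factors.

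Then I would take square roots and invoke subadditivity $\sqrt{a+b+c} \leq \sqrt{a} + \sqrt{b} + \sqrt{c}$ for $a,b,c \geq 0$, together with $\sqrt{t^2}=t$ since $t>0$, to conclude
\begin{align}
\|\bZ - \rho\bX\|_2 = \|\btW\|_2 \leq C(K,\gamma)\big(\sqrt{n} + \sqrt{p} + t\big),
\end{align}
which is exactly the claimed inequality.

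There is essentially no analytical obstacle here, as the lemma is a corollary of the computation preceding it; the only bookkeeping points are to confirm (i) that the randomness is governed solely by the single application of Lemma~\ref{lem:subg_matrix}, since the bound of Lemma~\ref{lem:bound.cov} on $\|\Ex[\btW^T\btW]\|_2$ is deterministic and hence consumes no probability budget, so the stated probability $1-\exp(-t^2)$ is inherited verbatim; and (ii) that every intermediate constant depends only on $(K,\gamma)$, so the final $C(K,\gamma)$ is well defined.
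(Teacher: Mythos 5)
Your proposal is correct and follows essentially the same route as the paper: the paper also obtains $\|\btW\|_2^2 \leq C(K+1)^2(n+p+t^2) + n(\rho(1-\rho)(K+1)^2 + \rho^2\gamma^2)$ by combining Lemmas~\ref{lem:subg_matrix}, \ref{lem:subG.z}, and \ref{lem:bound.cov}, and then passes directly to the stated operator-norm bound by absorbing the deterministic term and taking square roots. Your bookkeeping of the probability budget and of the dependence of constants on $(K,\gamma)$ fills in exactly the step the paper leaves implicit.
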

Using the above and \eqref{eq:thm1.lem1.1}, we conclude the proof of Lemma \ref{lem:thm1.1}.

\subsection{Proof of Lemma \ref{lem:thm1.2}}\label{ssec:thm1.lem2}
We want to bound $\| \bX - \btZ^k \|_{2,\infty}^2$. 
To that end, let $\Delta_j = \bX_{\cdot j} - \btZ^k_{\cdot j}$ for any $j \in [p]$. 
Our interest is in bounding $\|\Delta_j\|_2^2$ for all $j \in [p]$. 
Consider,
\begin{align}
 \btZ^k_{\cdot j} - \bX_{\cdot j}  & =  (\btZ^k_{\cdot j} - \bhU_k \bhU_k^T\bX_{\cdot j}) + (\bhU_k \bhU_k^T\bX_{\cdot j} - \bX_{\cdot j}).
\end{align}
Now, note that $\btZ^k_{\cdot j} - \bhU_k \bhU_k^T\bX_{\cdot j}$ belongs to the subspace spanned by column vectors of $\bhU_k$, while $\bhU_k \bhU_k^T\bX_{\cdot j} - \bX_{\cdot j}$ belongs to its orthogonal complement with respect to $\Rb^n$. 
As a result, 
\begin{align}\label{eq:num.0}
\|\btZ^k_{\cdot j} - \bX_{\cdot j}\|^2_2 & = \| \btZ^k_{\cdot j} - \bhU_k \bhU_k^T\bX_{\cdot j}\|_2^2 + \| \bhU_k \bhU_k^T\bX_{\cdot j} - \bX_{\cdot j}\|_2^2.
\end{align}

\vspace{10pt} \noindent
{\em Bounding $\| \btZ^k_{\cdot j} - \bhU_k \bhU_k^T\bX_{\cdot j}\|_2^2$. }
Recall that $\btZ = (1/\hrho) \bZ = \bhU \bhS \bhV^T$, and hence $\bZ = \hrho \bhU \bhS \bhV^T$.  
Consequently, 
\begin{align}
\frac{1}{\hrho}  \bhU_k \bhU_k^T \bZ_{\cdot j} &= 
\frac{1}{\hrho}  \bhU_k \bhU_k^T \bZ \basis_j 
=  \bhU_k \bhU_k^T \bhU \bhS \bhV^T \basis_j \nonumber \\
& =\bhU_k \bhS_k \bhV_k^T \basis_j = \btZ^k_{\cdot j}.
\end{align}
Therefore, we have 
\begin{align}
\btZ^k_{\cdot j} - \bhU_k \bhU_k^T\bX_{\cdot j} & = \frac{1}{\hrho}  \bhU_k \bhU_k^T \bZ_{\cdot j} - \bhU_k \bhU_k^T\bX_{\cdot j} \nonumber \\
& =  \frac{1}{\hrho} \bhU_k \bhU_k^T ( \bZ_{\cdot j} - \rho \bX_{\cdot j} ) + \Big(\frac{\rho - \hrho}{\hrho} \Big) \bhU_k \bhU_k^T\bX_{\cdot j}. 
\end{align}
Therefore, 
\begin{align}
\| \btZ^k_{\cdot j} - \bhU_k \bhU_k^T\bX_{\cdot j} \|_2^2 & \leq \frac{2}{\hrho^2} \| \bhU_k \bhU_k^T ( \bZ_{\cdot j} - \rho \bX_{\cdot j} )\|_2^2 + 2 \Big(\frac{\rho - \hrho}{\hrho} \Big)^2 \| \bhU_k \bhU_k^T\bX_{\cdot j}\|_2^2 \nonumber \\
& \leq \frac{2}{\hrho^2} \|\bhU_k \bhU_k^T ( \bZ_{\cdot j} - \rho \bX_{\cdot j} )\|_2^2 + 
2 \Big(\frac{\rho - \hrho}{\hrho} \Big)^2 \| \bX_{\cdot j}\|_2^2,
\end{align}
where we have used the fact that $\| \bhU_k \bhU_k^T \|_2 = 1$. 
Recall that $\bU \in \Rb^{n \times r}$ represents the left singular vectors of $\bX$. 
Thus,
\begin{align}
\| \bhU_k \bhU_k^T ( \bZ_{\cdot j} - \rho \bX_{\cdot j} )\|_2^2 
	& \leq 2 \| (\bhU_k \bhU_k^T - \bU \bU^T) ( \bZ_{\cdot j} - \rho \bX_{\cdot j})\|_2^2 + 2 \| \bU \bU^T ( \bZ_{\cdot j} - \rho \bX_{\cdot j})\|_2^2 \nonumber \\
	& \leq 2 \| \bhU_k \bhU_k^T - \bU \bU^T\|_2^2 ~ \| \bZ_{\cdot j} - \rho \bX_{\cdot j} \|_2^2 + 2 \| \bU \bU^T ( \bZ_{\cdot j} - \rho \bX_{\cdot j})\|_2^2.
\end{align}
By Assumption \ref{assumption:bounded}, we have that $\| \bX_{\cdot j}\|_2^2 \leq n$. 
This yields 
\begin{align}\label{eq:num.1}
\| \btZ^k_{\cdot j} - \bhU_k \bhU_k^T\bX_{\cdot j} \|_2^2 
	& \leq \frac{4}{\hrho^2}  \| \bhU_k \bhU_k^T - \bU \bU^T\|_2^2 ~ \| \bZ_{\cdot j} - \rho \bX_{\cdot j} \|_2^2  \nonumber \\
	& \quad + \frac{4}{\hrho^2}\| \bU \bU^T ( \bZ_{\cdot j} - \rho \bX_{\cdot j})\|_2^2 + 2n  \Big(\frac{\rho - \hrho}{\hrho}\Big)^2.
\end{align}
We now state Lemmas \ref{lem:est.rho} and \ref{lem:projection}. Their proofs are in Sections \ref{ssec:lem.est.rho} and \ref{ssec:lem.projection}, respectively.

\vspace{5pt}
\begin{lemma}\label{lem:est.rho}
For any $\alpha > 1$, 
\begin{align}
\Prob{  \rho / \alpha  \le \widehat{\rho} \le \alpha \rho } & \geq 1-2 \exp\Big( -\frac{(\alpha-1)^2 n p \rho}{2\alpha^2}\Big).
\end{align}
Therefore, for $\rho \geq c \frac{\log^2 np}{np}$, we have w.p. $1-O(1/(np)^{10})$
\begin{align}
\frac{\rho}{2} \leq \hrho \leq 2\rho & \quad \mbox{and} \quad \Big(\frac{\rho - \hrho}{\hrho}\Big)^2  \leq C \frac{\log(np)}{\rho np}.
\end{align}
\end{lemma}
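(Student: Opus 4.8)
The plan is to recognize that $\hrho$ is simply the empirical mean of $np$ independent Bernoulli observations, so the whole lemma reduces to concentration of a binomial around its mean. Concretely, $np\,\hrho = \sum_{i \le n,\, j \le p} \Ind\{\text{entry } (i,j) \text{ observed}\}$ is a sum of $np$ independent $\mathrm{Bernoulli}(\rho)$ indicators by Assumption \ref{assumption:cov_noise}, hence $np\,\hrho \sim \mathrm{Binomial}(np, \rho)$ with $\Ex[\hrho] = \rho$. Everything then follows from the standard Chernoff/Bernstein machinery applied to this sum.

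For the first display I would apply the multiplicative Chernoff bound to each tail separately and take a union bound. Writing $\rho/\alpha = (1-\delta_1)\rho$ with $\delta_1 = (\alpha-1)/\alpha \in (0,1)$, the lower-tail bound $\Prob{np\,\hrho \le (1-\delta_1)np\rho} \le \exp(-\delta_1^2 np\rho/2)$ is exactly $\exp(-(\alpha-1)^2 np\rho/(2\alpha^2))$. Writing $\alpha\rho = (1+\delta_2)\rho$ with $\delta_2 = \alpha - 1$, the upper-tail bound $\Prob{np\,\hrho \ge (1+\delta_2)np\rho} \le \exp(-\delta_2^2 np\rho/(2+\delta_2))$ gives $\exp(-(\alpha-1)^2 np\rho/(1+\alpha))$; since $2\alpha^2 - \alpha - 1 = (2\alpha+1)(\alpha-1) \ge 0$ for $\alpha > 1$, this is dominated by $\exp(-(\alpha-1)^2 np\rho/(2\alpha^2))$, so both tails obey the same bound and the factor of $2$ comes from the union bound. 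For the "therefore" part I set $\alpha = 2$ and invoke $\rho \ge c\log^2(np)/(np)$, so that $np\rho \ge c\log^2(np)$ and $2\exp(-np\rho/8) \le 2(np)^{-c\log(np)/8} = O((np)^{-10})$ for $c$ large enough; this yields $\rho/2 \le \hrho \le 2\rho$ with the stated probability.

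The last inequality is the only step needing more than the multiplicative bound: the latter only certifies that $((\rho-\hrho)/\hrho)^2$ is $O(1)$, which is far too weak, so I would control the numerator with a sharper \emph{additive} (Bernstein) tail. Each indicator has variance $\rho(1-\rho) \le \rho$, so Bernstein gives $\Prob{|\hrho - \rho| \ge \epsilon} \le 2\exp(-np\,\epsilon^2/(2(\rho + \epsilon/3)))$. Choosing $\epsilon = \sqrt{40\,\rho\log(np)/(np)}$, which satisfies $\epsilon \le \rho$ under the hypothesis on $\rho$ (so that $\rho + \epsilon/3 \le 2\rho$), this probability is $O((np)^{-10})$, whence $(\rho-\hrho)^2 \le 40\,\rho\log(np)/(np)$ with high probability. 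Intersecting with the event $\hrho \ge \rho/2$ from the first part gives $((\rho-\hrho)/\hrho)^2 \le 4(\rho-\hrho)^2/\rho^2 \le 160\log(np)/(\rho\, np)$, which is the claimed bound after renaming the constant.

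I do not expect a substantive obstacle here, as this is fundamentally a concentration-of-binomial statement; the one subtlety worth flagging is precisely that the final bound requires the additive Bernstein deviation $|\rho - \hrho| = O(\sqrt{\rho\log(np)/(np)})$ rather than the multiplicative Chernoff control used to establish $\rho/2 \le \hrho \le 2\rho$, since the multiplicative bound governs only relative deviations and cannot produce the decaying $\log(np)/(\rho\,np)$ rate. Tracking the constants so that each failure probability lands below $(np)^{-10}$ (by taking $c$ suitably large) is the only bookkeeping that needs care.
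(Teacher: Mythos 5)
Your proof is correct, and the first display is established exactly as in the paper: multiplicative Chernoff on each tail of the $\mathrm{Binomial}(np,\rho)$ variable $np\,\hrho$, the observation that $\alpha+1 < 2\alpha^2$ for $\alpha>1$ lets both tails share the weaker exponent, and a union bound supplies the factor of $2$. Where you diverge is the final inequality $\bigl((\rho-\hrho)/\hrho\bigr)^2 \le C\log(np)/(\rho np)$: you invoke an additive Bernstein bound with $\epsilon \asymp \sqrt{\rho\log(np)/(np)}$ and intersect with the event $\hrho \ge \rho/2$, whereas the paper simply reuses its own first display with a second, much smaller choice of $\alpha$, namely $\alpha - 1 \asymp \sqrt{\log(np)/(\rho np)}$, for which the failure probability is still $O((np)^{-10})$ and the event $\rho/\alpha \le \hrho \le \alpha\rho$ directly forces $|\rho-\hrho|/\hrho = O(\alpha-1)$. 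So your closing remark that the multiplicative bound ``cannot produce the decaying $\log(np)/(\rho np)$ rate'' is not quite right --- it can, once $\alpha$ is allowed to shrink toward $1$ at the appropriate rate, and in that regime the multiplicative Chernoff bound is essentially Bernstein's inequality in disguise. Both routes are valid and give the same constants up to renaming; the paper's version is marginally more economical in that it needs only the single Chernoff statement already proved, while yours makes the additive-deviation mechanism explicit, which is arguably clearer about where the $\sqrt{\rho\log(np)/(np)}$ scale comes from.
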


\begin{lemma}\label{lem:projection}
Consider any matrix $\bQ \in \Rb^{n \times \ell}$ with $1 \leq \ell \leq n$ such that its columns $\bQ_{\cdot j}$ for $j \in [\ell]$ are orthonormal vectors. 
Then for any $t > 0$, 
\begin{align}\label{eq:lem.proj.1}
	& \Pb \Big( \max_{j \in [p]} \, \norm{\bQ\bQ^T(\bZ_{\cdot j} - \rho \bX_{\cdot j}) }_{2}^2  \ge \ell C(K+1)^2 + t \Big) \nonumber \\
	& \qquad \qquad \le p  \cdot  \exp\Big(-c\min\Big( \frac{t^2}{C(K+1)^4 \ell}, \frac{t}{C(K+1)^2} \Big)\Big).  
\end{align}
%
%for $C, C' > 0$ large enough constants. 
%
Subsequently, w.p. $1-O(1/(np)^{10})$, 
\begin{align}\label{eq:lem.proj.2}
\max_{j \in [p]} \, \norm{\bQ\bQ^T(\bZ_{\cdot j} - \rho \bX_{\cdot j}) }_{2}^2 & \leq C(K+1)^2 (\ell + {\sqrt{\ell}}\, {\log(np)}).
\end{align}
\end{lemma}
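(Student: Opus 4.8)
The plan is to recognize $\|\bQ\bQ^T(\bZ_{\cdot j} - \rho\bX_{\cdot j})\|_2^2$ as a quadratic form in a vector with independent sub-gaussian coordinates, then invoke the Hanson--Wright inequality followed by a union bound over the $p$ columns. First I would fix $j \in [p]$ and write $\bv_j \coloneqq \bZ_{\cdot j} - \rho\bX_{\cdot j} \in \Rb^n$, the $j$-th column of $\btW = \bZ - \rho\bX$. Since $\bQ$ has orthonormal columns, $\bQ^T\bQ = \Id$ and $\bQ\bQ^T$ is the orthogonal projection onto the column space of $\bQ$; hence $\|\bQ\bQ^T\bv_j\|_2^2 = \bv_j^T \bM \bv_j$ with $\bM \coloneqq \bQ\bQ^T$, a rank-$\ell$ projection satisfying $\|\bM\|_2 = 1$ and $\|\bM\|_F^2 = \tra(\bM) = \ell$. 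The coordinates of $\bv_j$ are the entries $\bM$ acts on, namely $(\btW)_{ij}$ for $i \le n$; these are independent across $i$ because the rows of $\btW$ are independent by Assumption~\ref{assumption:cov_noise}, each has mean zero, and by Lemma~\ref{lem:subG.z} each satisfies $\|(\btW)_{ij}\|_{\psi_2} \le \|\bz_i - \rho\bx_i\|_{\psi_2} \le C(K+1)$.

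Next I would control the mean of the quadratic form. Writing $\sigma_{ij}^2 = \Ex[(\btW)_{ij}^2] \le C(K+1)^2$ (the standard variance-proxy bound for sub-gaussian variables) and using $\Ex[\bv_j\bv_j^T] = \diag(\sigma_{1j}^2, \dots, \sigma_{nj}^2)$ by independence, I get $\Ex[\bv_j^T\bM\bv_j] = \sum_{i} \bM_{ii}\sigma_{ij}^2 \le C(K+1)^2 \tra(\bM) = \ell C(K+1)^2$, where the inequality uses $\bM_{ii} \ge 0$. Applying the Hanson--Wright inequality (e.g.\ Theorem~6.2.1 of \cite{vershynin2018high}) to $\bv_j^T\bM\bv_j$ with the values $\|\bM\|_F^2 = \ell$ and $\|\bM\|_2 = 1$ yields, for every $t>0$, the bound $\Pb(\bv_j^T\bM\bv_j - \Ex[\bv_j^T\bM\bv_j] \ge t) \le \exp(-c\min(t^2/(C(K+1)^4\ell),\, t/(C(K+1)^2)))$. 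Combining this with the mean bound $\Ex[\bv_j^T\bM\bv_j] \le \ell C(K+1)^2$ and taking a union bound over $j \in [p]$ gives exactly \eqref{eq:lem.proj.1}.

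Finally, to obtain the high-probability statement \eqref{eq:lem.proj.2}, I would set $t = C(K+1)^2\sqrt{\ell}\,\log(np)$. Then $t^2/(C(K+1)^4\ell) = \Theta(\log^2(np))$ while $t/(C(K+1)^2) = \sqrt{\ell}\,\log(np)$, so their minimum is at least $c'\log(np)$ (using $\ell \ge 1$); choosing the absolute constant in $t$ large enough makes $p\exp(-c\min(\cdots)) = O(1/(np)^{10})$, and on this event $\max_{j} \|\bQ\bQ^T\bv_j\|_2^2 \le \ell C(K+1)^2 + t = C(K+1)^2(\ell + \sqrt{\ell}\,\log(np))$. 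The only genuinely delicate points are verifying that the column entries of $\btW$ are independent (this uses independence \emph{across} rows, supplied by Assumption~\ref{assumption:cov_noise}, rather than independence within a row) and confirming that the deterministic mean term is dominated by $\ell C(K+1)^2$; the remainder is a direct application of Hanson--Wright together with a union bound.
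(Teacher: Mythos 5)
Your proposal is correct and follows essentially the same route as the paper's proof: both identify $\|\bQ\bQ^T(\bZ_{\cdot j}-\rho\bX_{\cdot j})\|_2^2$ as a quadratic form in a mean-zero vector with independent sub-gaussian coordinates (via Assumption~\ref{assumption:cov_noise} and Lemma~\ref{lem:subG.z}), bound its expectation by $C(K+1)^2\ell$ using the trace of the rank-$\ell$ projection, apply Hanson--Wright with $\|\bQ\bQ^T\|_2=1$ and $\|\bQ\bQ^T\|_F^2=\ell$, and finish with a union bound over $j\in[p]$ and the choice $t = C(K+1)^2\sqrt{\ell}\,\log(np)$.
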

\noindent Both terms $\| \bZ_{\cdot j} - \rho \bX_{\cdot j} \|_2^2$ and $\| \bU \bU^T ( \bZ_{\cdot j} - \rho \bX_{\cdot j})\|_2^2$ can be bounded by Lemma \ref{lem:projection}: for the first term $\bQ = \Id$, and for the second term $\bQ = \bU$. 
In summary, w.p. $1-O(1/(np)^{10})$, we have 
\begin{align}\label{eq:lem.proj.1.1}
\max_{j \in [p]} \, \norm{\bZ_{\cdot j} - \rho \bX_{\cdot j} }_{2}^2 & \leq C(K+1)^2 (n+ { \sqrt{n}}\, {\log(np)}),
\end{align}
and 
\begin{align}\label{eq:lem.proj.1.2}
\max_{j \in [p]} \, \| \bU \bU^T ( \bZ_{\cdot j} - \rho \bX_{\cdot j})\|_2^2 
& \leq C(K+1)^2 ( r+ {\sqrt{r}}\,{\log(np)}).
\end{align}
Using \eqref{eq:num.1}, \eqref{eq:lem.proj.1.1}, \eqref{eq:lem.proj.1.2}, and Lemmas \ref{lem:thm1.1} and \ref{lem:est.rho} with $k = r$, we conclude that w.p. $1-O(1/(np)^{10})$, 
\begin{align} 
&\max_{j \in [p]}  \| \btZ^k_{\cdot j} - \bhU_k \bhU_k^T\bX_{\cdot j} \|_2^2 
\\
& \leq C (K+1)^4 (\gamma + 1)^2
		\left(\frac{(n+p)(n + {\sqrt{n}}\, \log(np))}{\rho^4 s_r^2 } + \frac{r + {\sqrt{r}}\, \log(np)}{\rho^2}\right) + C \frac{\log(np)}{\rho\, p}. \label{eq:num.2}
\end{align}
%
%where $C(K, \gamma) > 0$ is a constant dependent on $\gamma, K$, and $C' > 0$ is an absolute constant. 

\vspace{10pt} \noindent
{\em Bounding $\| \bhU_k \bhU_k^T\bX_{\cdot j} - \bX_{\cdot j}\|_2^2$. } 
Recalling $\bX = \bU \bS \bV^T$, we obtain $\bU \bU^T \bX_{\cdot j} = \bX_{\cdot j}$ since $\bU \bU^T$ is the projection onto the column space of $\bX$. 
Therefore,
\begin{align}
\| \bhU_k \bhU_k^T\bX_{\cdot j} - \bX_{\cdot j}\|_2^2 & = \| \bhU_k \bhU_k^T\bX_{\cdot j} - \bU \bU^T \bX_{\cdot j}\|_2^2 \nonumber \\
& \leq  \| \bhU_k \bhU_k^T - \bU \bU^T\|_2^2 ~ \|\bX_{\cdot j}\|_2^2.
\end{align}
Using Property \ref{assumption:bounded}, note that $\|\bX_{\cdot j}\|_2^2 \leq n$. Thus using Lemma \ref{lem:thm1.1} with $k = r$, we have that w.p. at least
$1-O(1/(np)^{10})$, we have
\begin{align}\label{eq:num.3}
\| \bhU_k \bhU_k^T\bX_{\cdot j} - \bX_{\cdot j}\|_2^2 & \leq C \frac{n(n + p)}{\rho^2 s_r^2}.
\end{align} 

\vspace{10pt} \noindent
{\em Concluding. } 
From \eqref{eq:num.0}, \eqref{eq:num.2}, and \eqref{eq:num.3}, we claim w.p. at least $1-O(1/(np)^{10})$
\begin{align}
&\| \bX - \btZ^k \|_{2,\infty}^2 
\\ &\quad \leq C(K+1)^4 (\gamma + 1)^2  
		\left( \frac{(n+p)(n +{\sqrt{n}}\, \log(np))}{\rho^4 s_r^2 } + \frac{r + {\sqrt{r}}\, \log(np)}{\rho^2}\right) + C \frac{\log(np)}{\rho\,p}.
\end{align}
This completes the proof of Lemma \ref{lem:thm1.2}.

{
\subsection{Proof of Lemma \ref{lem:thm1.3}}\label{ssec:thm1.lem3}
To bound $\hs_k$, we recall Weyl's inequality. 

\vspace{5pt}
\begin{lemma}  [Weyl's inequality] \label{lemma:weyl}
Given $\bA, \bB \in \Rb^{m \times n}$, let $\sigma_i$ and $\widehat{\sigma}_i$ be the $i$-th singular values of $\bA$ and $\bB$, respectively, in decreasing order and repeated by multiplicities. 
Then for all $i \in [m \wedge n]$,
\begin{align*}
\abs{ \sigma_i - \widehat{\sigma}_i} &\le \norm{\bA - \bB}_2.
\end{align*}
\end{lemma}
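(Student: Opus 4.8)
The plan is to derive Weyl's inequality from the Courant--Fischer (min--max) variational characterization of singular values, after which the statement reduces to two elementary triangle-inequality arguments. Write $\bE = \bA - \bB$, so that $\norm{\bE}_2 = \norm{\bA - \bB}_2$, and recall the classical representation of the $i$-th singular value of a matrix $\bA \in \Rb^{m \times n}$:
\[
\sigma_i(\bA) = \max_{\substack{S \subseteq \Rb^n \\ \dim S = i}} \; \min_{\substack{x \in S \\ \norm{x}_2 = 1}} \norm{\bA x}_2,
\]
which follows by applying the usual Courant--Fischer theorem for symmetric matrices to $\bA^T \bA$ (whose eigenvalues are $\sigma_i(\bA)^2$) together with the identity $\norm{\bA x}_2^2 = x^T \bA^T \bA x$.

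First I would establish the one-sided bound $\widehat{\sigma}_i \le \sigma_i + \norm{\bE}_2$. For any unit vector $x$, the triangle inequality and the operator-norm bound give $\norm{\bB x}_2 \le \norm{\bA x}_2 + \norm{\bE x}_2 \le \norm{\bA x}_2 + \norm{\bE}_2$. Taking the minimum over unit vectors in a fixed $i$-dimensional subspace $S$ and then the maximum over all such $S$, the additive constant $\norm{\bE}_2$ passes through both optimizations, yielding
\[
\widehat{\sigma}_i = \max_{\dim S = i} \, \min_{x \in S,\, \norm{x}_2 = 1} \norm{\bB x}_2 \;\le\; \norm{\bE}_2 + \max_{\dim S = i} \, \min_{x \in S,\, \norm{x}_2 = 1} \norm{\bA x}_2 \;=\; \sigma_i + \norm{\bE}_2.
\]
Next I would obtain the reverse bound $\sigma_i \le \widehat{\sigma}_i + \norm{\bE}_2$ by the identical argument with the roles of $\bA$ and $\bB$ interchanged, using $\norm{\bB - \bA}_2 = \norm{\bE}_2$. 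Combining the two one-sided inequalities gives $\abs{\sigma_i - \widehat{\sigma}_i} \le \norm{\bE}_2 = \norm{\bA - \bB}_2$ for every $i \in [m \wedge n]$, which is exactly the claim.

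The only genuine content is the variational characterization; once it is in hand, the remainder is mechanical, so the main obstacle is merely deciding how much of Courant--Fischer to take as given. Since the lemma is invoked here as a standard tool, I would state the min--max formula with a one-line justification through $\bA^T \bA$ rather than reprove it from scratch. An equally short alternative bypasses the singular-value min--max entirely: apply the symmetric (eigenvalue) form of Weyl's inequality to the Hermitian dilations of $\bA$ and $\bB$---the symmetric block matrices whose off-diagonal blocks are $\bA, \bA^T$ and $\bB, \bB^T$, respectively---whose ordered eigenvalues are $\pm\sigma_i$ and $\pm\widehat{\sigma}_i$ padded with zeros, and whose difference has spectral norm $\norm{\bA - \bB}_2$; matching eigenvalues index by index then reproduces the bound. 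Either route is self-contained and short, and I would present the Courant--Fischer version as the primary argument.
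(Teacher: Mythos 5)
Your proof is correct. Note, however, that the paper does not actually prove this lemma: it is stated only to ``recall'' Weyl's inequality as a classical fact from matrix perturbation theory, and is used as a black box in the proof of Lemma \ref{lem:thm1.3}, so there is no in-paper argument to compare against. Your derivation via the Courant--Fischer characterization $\sigma_i(\bA) = \max_{\dim S = i} \min_{x \in S, \norm{x}_2 = 1} \norm{\bA x}_2$ is the standard one and is complete: the triangle inequality $\norm{\bB x}_2 \le \norm{\bA x}_2 + \norm{\bA - \bB}_2$ holds pointwise on the unit sphere, the additive constant passes through both the inner minimum and the outer maximum, and symmetry in $\bA, \bB$ gives the two-sided bound for every $i \in [m \wedge n]$ (the range for which the min--max formula applied to $\bA^T\bA$ is valid). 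The alternative you sketch via the Hermitian dilation $\begin{pmatrix} \bzero & \bA \\ \bA^T & \bzero \end{pmatrix}$, whose spectrum is $\{\pm \sigma_i\}$ padded with zeros, is equally valid and reduces the claim to the eigenvalue form of Weyl's inequality; either route would serve as a self-contained proof had the paper chosen to include one.
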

Let $\tilde{s}_k$ be the $k$-th singular value of $\bZ$.  
Then, $\hs_k = (1/\hrho) \tilde{s}_k$ since it is the $k$-th singular value of $\btZ = (1/\hrho) \bZ$. 
By Lemma \ref{lemma:weyl}, we have 
\begin{align}
| \tilde{s}_k - \rho s_k | & \leq \| \bZ - \rho \bX\|_2; 
\end{align}
recall that $s_k$ is the $k$-th singular value of $\bX$.  
As a result, 
\begin{align}\label{eq:weyl.1}
| \hs_k - s_k | & =  \frac{1}{\hrho}  |  \tilde{s}_k - \hrho s_k | \nonumber \\
& \leq  \frac{1}{\hrho}  |  \tilde{s}_k - \rho s_k | + \frac{|\rho - \hrho|}{\hrho} s_k \nonumber \\
& \leq \frac{\| \bZ - \rho \bX\|_2}{\hrho} + \frac{|\rho - \hrho|}{\hrho} s_k.
\end{align}
From Lemma \ref{lem.perturb.1} and Lemma \ref{lem:est.rho}, it follows that w.p. at least $1-O(1/(np)^{10})$, 
\begin{align}\label{eq:weyl.2}
| \hs_k - s_k | & \leq C(K + 1) ( \gamma + 1) \frac{\sqrt{n} + \sqrt{p}}{\rho} + C \frac{\sqrt{\log(np)}}{\sqrt{\rho\, np}} s_k.
\end{align}
This completes the proof of Lemma \ref{lem:thm1.3}.

\subsection{Proof of Lemma \ref{lem:thm1.4}}\label{ssec:thm1.lem4}
We need to bound $\langle \btZ^k (\bhbeta - \btbeta^*), \bvarepsilon\rangle$. To that end, we recall that $\bhbeta = \bhV_k \bhS_k^{-1} \bhU_k^T y$, $\btZ^k =  \bhU_k\bhS_k \bhV_k^T$, and $\by = \bX \btbeta^* + \bvarepsilon$. 
Thus, 
\begin{align}
\btZ^k \bhbeta & = \bhU_k\bhS_k \bhV_k^T \bhV_k \bhS_k^{-1} \bhU_k^T \by  =  \bhU_k \bhU_k^T \bX \btbeta^* +  \bhU_k \bhU_k^T \bvarepsilon.
\end{align}
Therefore, 
\begin{align}\label{eq:lem4.0}
 \langle \btZ^k 
 (\bhbeta - \btbeta^*), \bvarepsilon\rangle 
 &= \langle  \bhU_k\bhU_k^T \bX \btbeta^*, \bvarepsilon\rangle + \langle  \bhU_k\bhU_k^T \bvarepsilon, \bvarepsilon \rangle - \langle  \bhU_k\bhS_k \bhV_k^T\btbeta^*, \bvarepsilon \rangle. 
\end{align}
Now, $\bvarepsilon$ is independent of $\bhU_k, \bhS_k,  \bhV_k$ since $ \btZ^k$ is determined by $\bZ$, which is independent of $\bvarepsilon$. 
As a result, 
\begin{align}\label{eq:thm1.lem3.0}
\Ex\big[\langle \bhU_k\bhU_k^T \bvarepsilon, \bvarepsilon\rangle\big] & = \Ex\big[ \bvarepsilon^T \bhU_k\bhU_k^T \bvarepsilon \big] \nonumber \\
& = \Ex \big[\tr(\bvarepsilon^T \bhU_k\bhU_k^T \bvarepsilon)\big] = \Ex\big[\tr(\bvarepsilon \bvarepsilon^T \bhU_k\bhU_k^T)\big] \nonumber \\
& = \tr(\Ex\big[\bvarepsilon \bvarepsilon^T\big] \bhU_k\bhU_k^T) 
\le C\tr(\sigma^2 \bhU_k\bhU_k^T) \nonumber \\
& = C \sigma^2 \|\bhU_k\|_F^2 = C \sigma^2 k.  
\end{align}
Therefore, it follows that 
\begin{align}\label{eq:thm1.lem3.1}
\Ex [\langle \btZ^k (\bhbeta - \btbeta^*), \bvarepsilon\rangle ] & \le C\sigma^2 k, 
\end{align}
where we used the fact $\Ex[\bvarepsilon] = \bzero$.  
To obtain a high probability bound, using Lemma \ref{lemma:hoeffding_random} it follows that for any $t > 0$
\begin{align}\label{eq:lem4.1}
\Pb\left(  \langle \bhU_k \bhU_k^T \bX \btbeta^*, \bvarepsilon \rangle  \geq t \right) & \leq  \exp\Big( - \frac{c t^2}{n \|\btbeta^*\|_1^2 \sigma^2 } \Big)
\end{align}
due to Assumption \ref{assumption:response_noise}, and 
\begin{align}
\| \bhU_k\bhU_k^T \bX \btbeta^*\|_2 & 
\le \|\bX \btbeta^*\|_2 % \nonumber \\
 \leq \|\bX\|_{2, \infty} \|\btbeta^*\|_1 \leq \sqrt{n} \|\btbeta^*\|_1;
\end{align}
note that we have used the fact that $\bhU_k\bhU_k^T$ is a projection matrix and $\|\bX\|_{2, \infty} \leq \sqrt{n}$ due to Assumption \ref{assumption:bounded}. 
Similarly, for any $t > 0$
\begin{align}\label{eq:lem4.2}
&\Pb\left(  \langle \bhU_k\bhS_k \bhV_k^T\btbeta^*, \bvarepsilon \rangle  \geq t\right)
 \leq  \exp\Big( - \frac{c t^2}{\sigma^2 (n+\| \btZ^k - \bX\|^2_{2,\infty}) \|\btbeta^*\|_1^2  }  \Big),
\end{align}
due to Assumption \ref{assumption:response_noise}, and
\begin{align}
\| \bhU_k\bhS_k \bhV_k^T\btbeta^*\|_2 & = \| (\btZ^k - \bX)\btbeta^* + \bX \btbeta^*\|_2 ~\leq \| (\btZ^k - \bX) \btbeta^*\|_2 + \|\bX \btbeta^*\|_2 \nonumber \\
& \leq \big(\| \btZ^k - \bX\|_{2,\infty} + \|\bX\|_{2,\infty} \big) \|\btbeta^*\|_1.
\end{align}
Finally, using Lemma \ref{lemma:hansonwright_random} and \eqref{eq:thm1.lem3.1}, it follows that for any $t > 0$
{
\begin{align}\label{eq:lem4.3}
\Pb\left(  \langle \bhU_k\bhU_k^T \bvarepsilon, \bvarepsilon\rangle   \geq \sigma^2 k + t\right) & \leq  \exp\Big( - c \min\Big(\frac{t^2}{k \sigma^4}, \frac{t}{\sigma^2}\Big)\Big),
\end{align}
}
since $ \bhU_k \bhU_k^T $ is a projection matrix and by Assumption \ref{assumption:response_noise}.

From \eqref{eq:lem4.0}, \eqref{eq:lem4.1}, \eqref{eq:lem4.2}, and \eqref{eq:lem4.3}, we conclude that w.p. at least $1-O(1/(np)^{10})$, 
\begin{align}\label{eq:lem4.5}
\langle \btZ^k (\bhbeta - \btbeta^*), \bvarepsilon\rangle  & \leq 
\sigma^2 k + C \sigma \sqrt{\log(np)} \left({ \sigma}\sqrt{k} + \sigma \sqrt{\log(np)} 
+ \|\btbeta^*\|_1 (\sqrt{n} + { \| \btZ^k - \bX\|_{2,\infty} }) \right).
\end{align}
This completes the proof of Lemma \ref{lem:thm1.4}.
~~~

\subsection{Proof of Lemma \ref{lem:subg_matrix}}\label{ssec:matrix.conc}
As mentioned earlier, the proof presented here is a natural extension of that for Theorem 4.6.1 in \cite{vershynin2018high} for the non-isotropic setting. 
Recall that \[ \norm{\bA} \, = \max_{\bx \in S^{p-1}, \by \in S^{n-1}} \langle \bA \bx, \by \rangle, \] where $S^{p-1}, S^{n-1}$ denote the unit spheres in $\Rb^p$ and $\Rb^n$, respectively. 
We start by bounding the quadratic term $\langle \bA \bx, \by \rangle$ for a finite set $\bx, \by$ obtained by placing $1/4$-net on the unit spheres, and then use the bound on them to bound $\langle \bA \bx, \by \rangle$ for all $\bx, \by$ over the spheres. 

\vspace{10pt} \noindent
{\em Step 1: Approximation. } 
We will use Corollary 4.2.13 of \cite{vershynin2018high} to establish a $1/4$-net of $\Nc$ of the unit sphere $S^{p-1}$ with cardinality $\abs{ \Nc } \le 9^p$. 
Applying Lemma 4.4.1 of \cite{vershynin2018high}, we obtain 
\begin{align*}
\| \frac{1}{n} \bA^T \bA - \bSigma \|_2 &\le 2 \, \max_{\bx \in \Nc} \Big| \langle (\frac{1}{n} \bA^T \bA - \bSigma )\bx, \bx  \rangle \Big| = 2 \max_{\bx \in \Nc} \Big| \frac{1}{n} \| \bA \bx \|_2^2 - \bx^T \bSigma \bx \Big|. 
\end{align*} 
To achieve our desired result, it remains to show that 
\[ 
\max_{\bx \in \Nc} \Big| \frac{1}{n} \| \bA \bx \|_2^2 - \bx^T \bSigma \bx \Big| \le \frac{\epsilon}{2}, 
\]
where $\epsilon = K^2 \max(\delta, \delta^2)$. 

\vspace{10pt} \noindent
{\em Step 2: Concentration. } 
Let us fix a unit vector $\bx \in S^{p-1}$ and write 
\[ 
\norm{\bA \bx }_2^2 - \bx^T \bSigma \bx = \sum_{i=1}^n \left( \langle \bA_{i, \cdot}, \bx \rangle^2 - \Ex[ \langle \bA_{i, \cdot}, \bx \rangle^2 ] \right) 
=: \sum_{i=1}^n \left(Y_i^2 - \Ex[Y_i^2] \right). 
\]
Since the rows of $\bA$ are assumed to be independent sub-gaussian random vectors with $\|\bA_{i, \cdot} \|_{\psi_2} \le K$, it follows that $Y_i = \langle \bA_{i, \cdot}, \bx \rangle$ are independent sub-gaussian random variables with $\|Y_i \|_{\psi_2} \le K$. 
Therefore, $Y_i^2 - \Ex[Y_i^2]$ are independent, mean zero, sub-exponential random variables with 
\[ 
\|Y_i^2 - \Ex[Y_i^2] \|_{\psi_1}  \le C \| Y_i^2 \|_{\psi_1}  \le C \| Y_i \|_{\psi_2}^2 \le C K^2. 
\] 
As a result, we can apply Bernstein's inequality (see Theorem \ref{thm:bernstein}) to obtain 
\begin{align*}
\Pb \Big( \Big| \frac{1}{n} \norm{ \bA \bx }_2^2 - \bx^T \bSigma \bx \Big| \ge \frac{\epsilon}{2} \Big) &= \Pb \Big( \Big| \frac{1}{n} \sum_{i=1}^n (Y_i^2 - \Ex[Y_i^2]) \Big| \ge \frac{\epsilon}{2} \Big)
\\ &\le 2 \exp( - c \min\left( \frac{\epsilon^2}{K^4}, \frac{\epsilon}{K^2} \right) n )
\\ &= 2 \exp( - c \delta^2 n) 
\\ &\le 2 \exp(-c C^2 (p + t^2)), 
\end{align*}	
where the last inequality follows from the definition of $\delta$ in \eqref{eq:matrix.conc} and because $(a + b)^2 \ge a^2 + b^2$ for $a,b \ge 0$. 

\vspace{10pt} \noindent
{\em Step 3: Union bound. } 
We now apply a union bound over all elements in the net, 
\begin{align*}
\Pb \Big( \max_{\bx \in \Nc} \Big| \frac{1}{n} \norm{ \bA \bx }_2^2 - \bx^T \bSigma \bx \Big| 
\ge \frac{\epsilon}{2} \Big) 
&\le 9^p \cdot 2 \exp(-c C^2(p + t^2)) \le 2 \exp(-t^2),
\end{align*}
for large enough $C$. 
This concludes the proof. }

\subsection{Proof of Lemma \ref{lem:subG.z}}\label{ssec:subG.z}
Recall that $\bz_i = (\bx_i + \bw_i)\circ \bpi_{i}$, where $\bw_i$ is an independent mean zero subgaussian vector with $\| \bw_i \|_{\psi_2} \leq K$ and $\bpi_i$ is a vector of independent Bernoulli variables with parameter $\rho$. 
Hence, $\Ex[\bz_i - \rho \bx_i] = \bzero$ and is independent across $i \in [n]$. 
The only remaining item is a bound on $\|\bz_i - \rho \bx_i\|_{\psi_2}$.
To that end, note that
\begin{align}
\| \bz_i - \rho \bx_i\|_{\psi_2} & = \| \bx_i \circ \bpi_i + \bw_i \circ \bpi_i - \rho \bx_i \|_{\psi_2} \nonumber \\
& \leq \| \bx_i \circ (\rho \bone - \bpi_i)\|_{\psi_2} + \| \bw_i \circ \bpi_i \|_{\psi_2}. 
\end{align}
Now, $(\rho \bone - \bpi_i)$ is independent, zero mean random vector whose absolute value is bounded by $1$, and is component-wise multiplied by  $\bx_i$ which are bounded in absolute value by $1$ as per Assumption \ref{assumption:bounded}. 
That is, $\bx_i \circ (\rho \bone - \bpi_i)$ is a zero mean random vector where each component is independent and bounded in absolute value by $1$. 
That is, $\|\cdot \|_{\psi_2} \le C$. 

For $\bw_i \circ \bpi_i$, note that $\bw_i$ and $\bpi_i$ are independent vectors and the coordinates of $\bpi_i$ have support  $\{0, 1\}$. 
Therefore, from Lemma \ref{lem:bound.psi2}, it follows that $\| \bw_i \circ \bpi_i \|_{\psi_2} \leq \| \bw_i\|_{\psi_2} \le K$ by Assumption \ref{assumption:cov_noise}. 
The proof of Lemma \ref{lem:subG.z} is complete by choosing a large enough $C$.

\vspace{5pt}
\begin{lemma}\label{lem:bound.psi2}
	Suppose that $\bY \in \mathbb{R}^n$ and $\bP \in \{0, 1\}^n$ are independent random vectors. 
	Then,
	\[	
	    \|\bY \circ \bP \|_{\psi_{2}} \leq \| \bY \|_{\psi_{2}}.	
	\]
\end{lemma}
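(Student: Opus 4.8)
The plan is to reduce the statement to a one-dimensional sub-gaussian bound via the variational definition of the vector Orlicz norm, and then to exploit the independence of $\bY$ and $\bP$ by conditioning. Recall that for a random vector $\bv \in \Rb^n$ the sub-gaussian norm is $\| \bv \|_{\psi_2} = \sup_{\bu \in S^{n-1}} \| \langle \bv, \bu \rangle \|_{\psi_2}$, where the inner norm is the scalar Orlicz norm (Definition 3.4.1 of \cite{vershynin2018high}). Hence it suffices to fix an arbitrary $\bu \in S^{n-1}$ and show $\| \langle \bY \circ \bP, \bu \rangle \|_{\psi_2} \le \| \bY \|_{\psi_2}$; taking the supremum over $\bu$ then yields the claim.

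First I would rewrite the projection by moving the mask onto the test direction, $\langle \bY \circ \bP, \bu \rangle = \langle \bY, \bu \circ \bP \rangle$. The key geometric observation is that for any realization $\bP = \boldsymbol{p} \in \{0,1\}^n$, the masked direction $\bu \circ \boldsymbol{p}$ merely zeroes out some coordinates of $\bu$, so $\| \bu \circ \boldsymbol{p} \|_2 \le \| \bu \|_2 = 1$. Combining the scaling property of the scalar Orlicz norm with the definition of $\| \bY \|_{\psi_2}$ as a supremum over unit directions, this delivers the conditional bound $\| \langle \bY, \bu \circ \boldsymbol{p} \rangle \|_{\psi_2} \le \| \bu \circ \boldsymbol{p} \|_2 \, \| \bY \|_{\psi_2} \le \| \bY \|_{\psi_2}$, uniformly over all masks $\boldsymbol{p}$.

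Next I would translate this into the exponential-moment form of the Orlicz norm. Writing $K = \| \bY \|_{\psi_2}$, the conditional bound above together with monotonicity of $t \mapsto \Ex[\exp(X^2/t^2)]$ gives $\Ex\big[ \exp\big( \langle \bY, \bu \circ \boldsymbol{p} \rangle^2 / K^2 \big) \mid \bP = \boldsymbol{p} \big] \le 2$ for every $\boldsymbol{p}$ (the degenerate case $\bu \circ \boldsymbol{p} = \bzero$ giving the value $1 \le 2$). Since $\bY$ and $\bP$ are independent, I invoke the tower property and average this uniform bound over the law of $\bP$:
\begin{align}
\Ex\Big[ \exp\Big( \tfrac{\langle \bY \circ \bP, \bu \rangle^2}{K^2} \Big) \Big]
= \Ex_{\bP}\Big[ \Ex_{\bY}\big[ \exp\big( \tfrac{\langle \bY, \bu \circ \bP \rangle^2}{K^2} \big) \,\big|\, \bP \big] \Big]
\le \Ex_{\bP}[\,2\,] = 2.
\end{align}
By the definition of the scalar Orlicz norm this is precisely $\| \langle \bY \circ \bP, \bu \rangle \|_{\psi_2} \le K = \| \bY \|_{\psi_2}$, completing the reduction.

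I do not anticipate a genuine obstacle here, as the lemma is essentially a monotonicity-under-masking statement: multiplying by a $\{0,1\}$ vector can only shrink the $\ell_2$-norm of the test direction and hence never increases the sub-gaussian norm. The only point requiring care is the order of the two expectations---the conditional $\psi_2$ bound must be established uniformly over all masks $\boldsymbol{p}$ \emph{before} integrating out $\bP$, which is exactly what the independence assumption and the coordinate-zeroing inequality $\| \bu \circ \boldsymbol{p} \|_2 \le 1$ together provide. Everything else is the standard equivalence between the Orlicz-norm definition and its exponential-moment characterization.
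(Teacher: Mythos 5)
Your proof is correct and follows essentially the same route as the paper's: both condition on the mask $\bP$, observe that $\bu \circ \boldsymbol{p}$ has $\ell_2$-norm at most one so the conditional exponential moment is uniformly bounded by $2$, and then integrate out $\bP$ using independence. Your use of the homogeneity of the scalar Orlicz norm to handle the sub-unit masked direction is a slightly more explicit rendering of the step the paper carries out via its maximum-achieving unit vector $\bu_0$, but the argument is the same.
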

\begin{proof}
	Given a binary vector $\bP \in \{0, 1\}^n$, let $I_{\bP} = \{ i \in [n]: P_{i} = 1 \}$. 
	Observe that 
	\[
	\bY \circ \bP = \sum_{i \in I_{\bP}} \be_i \otimes \be_i \bY.
	\] 
	Here, $\circ$ denotes the Hadamard product (entry-wise product) of two matrices.
	By definition of the $\psi_{2}$-norm,
	\begin{align*}
		\| \bY \|_{\psi_{2}} 
			&= \sup_{\bu \in \mathbb{S}^{n-1}} \| \bu^T \bY \|_{\psi_{2}}
			= \sup_{\bu \in \mathbb{S}^{n-1}}\inf \{ t > 0: \Ex_{\bY} [ \exp( | \bu^T \bY|^{2} / t^{2} ) ] \leq 2 \}.
	\end{align*}
	Let $\bu_0 \in \mathbb{S}^{n-1}$ denote the maximum-achieving unit vector (such a $\bu_0$ exists because $\inf\{\cdots\}$ is continuous with respect to $\bu$ and $\mathbb{S}^{n-1}$ is compact).
Now,	\begin{align*}
		\| \bY \circ \bP \|_{\psi_{2}}
			&= \sup_{\bu \in \mathbb{S}^{n-1}} \| \bu^T \bY \circ \bP \|_{\psi_{2}}\\
			&= \sup_{\bu \in \mathbb{\bS}^{n-1}} \inf \{ t > 0: 
				\Ex_{\bY,\bP} [ \exp (  | \bu^T \bY \circ \bP |^{2} / t^{2} ) ] \leq 2 \}\\
			&= \sup_{\bu \in \mathbb{S}^{n-1}} \inf \{ t > 0: 
				\Ex_{\bP} [ \Ex_{\bY} [  \exp (  | \bu^T \bY \circ \bP |^{2} / t^{2} )
				 ~|~ \bP ] ] \leq 2 \}\\
			&= \sup_{\bu \in \mathbb{S}^{n-1}} \inf\{ t > 0: 
				\Ex_{\bP} [ \Ex_{\bY} [  \text{exp}(  | \bu^T  \sum_{i \in I_{\bP}} \be_i \otimes \be_i \bY |^{2}
				 / t^{2} ) ~|~ \bP ] ] \leq 2 \}\\
			&= \sup_{\bu \in \mathbb{S}^{n-1}} \inf\{ t > 0: 
				\Ex_{\bP} [ \Ex_{\bY} [  \text{exp} (  |  (  \sum_{i \in I_{\bP}} \be_i \otimes \be_i \bu )^T  \bY |^{2}
				 / t^{2} ) ~|~ \bP ] ] \leq 2 \}.
	\end{align*}
	For any $\bu \in \mathbb{S}^{n-1}$, observe that
	\begin{align*}
		\Ex_{\bY} [  \text{exp} (  |  (  \sum_{i \in I_P} \be_i \otimes \be_i \bu )^T  \bY |^{2}
				 / t^{2} ) ~|~ \bP  ] 
			\leq \Ex_{\bY} [ \exp( | \bu_0^T \bY |^{2}/t^{2} )  ].
	\end{align*}
	Therefore, taking supremum over $\bu \in \mathbb{S}^{n-1}$, we obtain
	\begin{align*}
		\|\bY \circ \bP \|_{\psi_{2}}&\leq \|\bY\|_{\psi_{2}}.
	\end{align*}
\end{proof}

\subsection{Proof of Lemma \ref{lem:bound.cov}}\label{ssec:lem:bound.cov} 
Consider
\begin{align}
\Ex[\btW^T \btW] & = \sum_{i = 1}^n \Ex[(\bz_i - \rho \bx_i) \otimes (\bz_i - \rho \bx_i)] \nonumber \\
&= \sum_{i = 1}^n \Ex[\bz_i \otimes \bz_i] - \rho^2 (\bx_i \otimes \bx_i) \nonumber \\
&=  \sum_{i = 1}^n (\rho - \rho^2) \diag( \bx_i \otimes \bx_i ) + (\rho-\rho^2) \diag(\Ex[\bw_i \otimes \bw_i] ) 
+ \rho^2 \Ex[\bw_i \otimes \bw_i].
\end{align}
Note that $\| \diag(\bX^T \bX)\|_2 \leq n$ due to Assumption \ref{assumption:bounded}.
Using Assumption \ref{assumption:cov_noise}, it follows that $\|\diag(\Ex[\bw_i \otimes \bw_i])\|_2 \leq CK^2$.  
By Assumption \ref{assumption:cov_noise}, we have $\| \Ex[\bw_i \otimes \bw_i] \|_2 \leq \gamma^2$. 
Therefore,
\begin{align}
\| \Ex[\btW^T \btW] \|_2 & \leq Cn (\rho - \rho^2) (K+1)^2 + n \rho^2 \gamma^2.
\end{align}
This completes the proof of Lemma \ref{lem:bound.cov}.

\subsection{Proof of Lemma \ref{lem:est.rho}}\label{ssec:lem.est.rho}
By the Binomial Chernoff bound, for $\alpha > 1$,
\begin{align*}
\Prob{ \widehat{\rho} > \alpha \rho } &\leq \exp\left( - \frac{(\alpha - 1 )^2}{\alpha + 1} np \rho \right)	
\quad\text{and}\quad
\Prob{ \widehat{\rho} <   \rho / \alpha  } \leq \exp \left( - \frac{(\alpha - 1)^2}{2 \alpha^2} np \rho \right).
\end{align*}
By the union bound,
\[
\Prob{  \rho / \alpha  \le \widehat{\rho} \le \alpha \rho } 
\geq 
1 - \Prob{ \widehat{\rho} > \alpha \rho } -  \Prob{ \widehat{\rho} < \rho / \alpha   }.
\]
Noticing $\alpha + 1 < 2 \alpha < 2 \alpha^2$ for all $\alpha > 1$, we obtain the desired bound claimed in Lemma \ref{lem:est.rho}. 
To complete the remaining claim of Lemma \ref{lem:est.rho}, we consider an $\alpha$ that satisfies
\begin{align}
(\alpha -1)^2 & \le C \frac{\log(np)}{\rho np}, 
\end{align}
for a constant $C > 0$. Thus,
\begin{align}
 1 - C \frac{\sqrt{\log(np)}}{\sqrt{\rho np}} 
 \le \alpha 
 \le 1 + C \frac{\sqrt{\log(np)}}{\sqrt{\rho np}}. 
%\alpha & \leq 1 + C \sqrt{\frac{\log(np)}{\rho np}}, ~\mbox{and}~ \alpha ~\geq 1 - C \sqrt{\frac{\log(np)}{\rho np}}.
%
\end{align}
%
%for appropriate constant $C > 0$. 
%
Then, with $\rho \geq c \frac{\log^2 np}{np}$, we have that $\alpha \leq 2$.
Further by choosing $C > 0$ large enough, we have
\begin{align}\label{eq:rho.1}
 \frac{(\rho - \hrho)^2}{\hrho^2} \leq C \frac{\log(np)}{\rho np}.
\end{align}
holds w.p. at least $1-O(1/(np)^{10})$.  
This completes the proof of Lemma \ref{lem:est.rho}.
%}

\subsection{Proof of Lemma \ref{lem:projection}}\label{ssec:lem.projection}
By definition $\bQ \bQ^T \in \Rb^{n \times n}$ is a rank $\ell$ matrix. 
Since $\bQ$ has orthonormal column vectors, the projection operator has $\|\bQ\bQ^T\|_2 = 1$ and $\|\bQ\bQ^T\|^2_F = \ell$. 
For a given $j \in [p]$, the random vector $\bZ_{\cdot j} - \rho \bX_{\cdot j}$ is such that it has zero mean, independent components that are sub-gaussian by
Assumption \ref{assumption:cov_noise}.  
For any $i \in [n], j \in [p]$, we have by property of $\psi_2$ norm, $\|z_{ij} - \rho x_{ij}\|_{\psi_2} \leq \| \bz_i - \rho \bx_i\|_{\psi_2}$ which is bounded by
$C(K+1)$ using Lemma \ref{lem:subG.z}. 
Recall the Hanson-Wright inequality (\cite{vershynin2018high}):

\vspace{5pt}
\begin{thm} [Hanson-Wright inequality] \label{thm:hanson_wright}
Let $\bzeta \in \Rb^n$ be a random vector with independent, mean zero, sub-gaussian coordinates. 
Let $\bA$ be an $n \times n$ matrix. 
Then for any $t > 0$, 
\[
\Pb \left( \abs{ \bzeta^T \bA \bzeta - \Ex [ \bzeta^T \bA \bzeta ] } \ge t \right) \le 2 \exp\Big(-c\min \Big(\frac{t^2}{L^4 \norm{\bA}_F^2}, \frac{t}{L^2 \norm{\bA}_2} \Big)\Big),
\]
where $L = \max_{i\in [n]} \norm{ \zeta_i }_{\psi_2}$. 
\end{thm}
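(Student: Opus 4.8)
The plan is to prove the Hanson--Wright inequality by the classical moment-generating-function (MGF) route, following the Rudelson--Vershynin argument. Writing $S = \bzeta^T \bA \bzeta - \Ex[\bzeta^T \bA \bzeta]$, by the Chernoff bound it suffices to establish the two-regime MGF estimate $\Ex \exp(\lambda S) \le \exp(C \lambda^2 L^4 \norm{\bA}_F^2)$ valid for all $|\lambda| \le c/(L^2 \norm{\bA}_2)$; optimizing over $\lambda$ then yields the stated tail. The first step is to split the quadratic form into its diagonal and off-diagonal contributions,
\[
S = \sum_{i=1}^n A_{ii}(\zeta_i^2 - \Ex \zeta_i^2) + \sum_{i \ne j} A_{ij}\zeta_i \zeta_j =: S_{\mathrm{diag}} + S_{\mathrm{off}},
\]
and to bound each separately, recombining them at the end through a union bound that splits $t$ into two halves.

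For the diagonal term, each summand $A_{ii}(\zeta_i^2 - \Ex \zeta_i^2)$ is an independent, mean-zero, sub-exponential random variable with $\norm{A_{ii}(\zeta_i^2 - \Ex\zeta_i^2)}_{\psi_1} \le C |A_{ii}| L^2$, since $\norm{\zeta_i^2}_{\psi_1} \le C\norm{\zeta_i}_{\psi_2}^2$. Bernstein's inequality (Theorem~\ref{thm:bernstein}) then controls $S_{\mathrm{diag}}$ by $\exp(-c\min(t^2/(L^4 \sum_i A_{ii}^2),\, t/(L^2 \max_i |A_{ii}|)))$, and I would bound $\sum_i A_{ii}^2 \le \norm{\bA}_F^2$ and $\max_i |A_{ii}| \le \norm{\bA}_2$ to match the claimed form.

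The off-diagonal term $S_{\mathrm{off}}$ is the crux, and I would bound $\Ex \exp(\lambda S_{\mathrm{off}})$ in three moves. First, \emph{decoupling}: introducing an independent copy $\bzeta'$ of $\bzeta$, the decoupling inequality gives $\Ex \exp(\lambda S_{\mathrm{off}}) \le \Ex \exp(4\lambda \langle \bzeta, \bA \bzeta' \rangle)$, where the right-hand quadratic form now runs over all $i,j$ but pairs the two independent copies. Second, \emph{conditioning} on $\bzeta'$: the expression $\langle \bzeta, \bA \bzeta'\rangle = \sum_i \zeta_i (\bA \bzeta')_i$ is a sum of independent mean-zero sub-gaussians in $\bzeta$, so its conditional MGF is at most $\exp(C \lambda^2 L^2 \norm{\bA \bzeta'}_2^2)$. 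Third, \emph{integrating out} $\bzeta'$: it remains to control $\Ex_{\bzeta'} \exp(C \lambda^2 L^2 \norm{\bA \bzeta'}_2^2)$.

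This last step is the main obstacle, because $\norm{\bA \bzeta'}_2^2$ is itself a quadratic form in $\bzeta'$ and, for general sub-gaussian (non-Gaussian) coordinates, cannot simply be diagonalized. The plan is to pass to the eigenbasis of $\bA^T \bA$ and invoke a Gaussian comparison (or a rotation-invariant surrogate) for the sub-gaussian vector, reducing to a product of one-dimensional MGFs of the form $(1 - c \lambda^2 L^2 \mu_k)^{-1/2}$, where $\mu_k$ denote the eigenvalues of $\bA^T \bA$. Using $\sum_k \mu_k = \norm{\bA}_F^2$, $\max_k \mu_k = \norm{\bA}_2^2$, and $-\log(1-x) \le 2x$ for small $x$, this expectation is at most $\exp(C \lambda^2 L^4 \norm{\bA}_F^2)$ precisely in the range $|\lambda| \le c/(L^2 \norm{\bA}_2)$. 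Substituting back recovers the sub-gaussian/sub-exponential MGF for $S_{\mathrm{off}}$, and combining with the diagonal bound via the union bound and optimizing $\lambda$ completes the proof.
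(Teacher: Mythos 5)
Your proposal is correct and follows the standard Rudelson--Vershynin argument (diagonal/off-diagonal split, Bernstein for the diagonal, decoupling--conditioning--Gaussian comparison for the off-diagonal), which is exactly the route the paper relies on: it recalls Theorem~\ref{thm:hanson_wright} by citation to \cite{vershynin2018high} rather than proving it, and reproduces this same skeleton in its proof of the modified version, Lemma~\ref{lemma:hansonwright_random}. The one step you rightly flag as delicate---integrating out $\bzeta'$ via a Gaussian surrogate and the product of MGFs $(1-c\lambda^2L^2\mu_k)^{-1/2}$---is precisely what Lemmas~6.2.2 and~6.2.3 of \cite{vershynin2018high} supply, so no gap remains.
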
 
Now with $\bzeta = \bZ_{\cdot j} - \rho \bX_{\cdot j}$ and the fact that $\bQ^T \bQ = \Id \in \Rb^{\ell \times \ell}$, $\| \bQ \bQ^T \bzeta \|_2^2 = \bzeta^T \bQ \bQ^T \bzeta$.
Therefore, by Theorem \ref{thm:hanson_wright}, for any $t > 0$, 
\begin{align}
\| \bQ \bQ^T \bzeta \|_2^2 & \leq \Ex\big[\bzeta^T \bQ \bQ^T \bzeta\big] + t, 
\end{align}
w.p. at least $1 - \exp\Big(-c \min \big(\frac{t}{C(K+1)^2 }, \frac{t^2}{C(K+1)^4 \ell}\big)\Big)$.  
Now, 
\begin{align}
\Ex\big[\bzeta^T \bQ \bQ^T \bzeta\big] 
& = \sum_{m=1}^\ell \Ex\big[(\bQ_{\cdot m}^T \bzeta)^2\big] \nonumber \\
& \stackrel{(a)}{=} \sum_{m=1}^\ell \Var(\bQ_{\cdot m}^T \bzeta) \nonumber \\
& \stackrel{(b)}{=}\sum_{m=1}^\ell \sum_{i=1}^n \bQ_{im}^2 \Var(\zeta_i) \nonumber \\
& \stackrel{(c)}{\leq} C(K+1)^2 \ell,
\end{align}
where $\bzeta = \bZ_{\cdot j} - \rho \bX_{\cdot j}$, and hence 
(a) follows from $\Ex[\bzeta] = \Ex[\bZ_{\cdot j} - \rho \bX_{\cdot j}] = \bzero$, 
(b) follows from $\bzeta$ having independent components and 
(c) follows from each component of $\bzeta$ having $\psi_2$-norm bounded by $C(K+1)$.
%and $C' > 0$ is large enough constant.  
%
Therefore, it follows by union bound that for any $t > 0$, 
\begin{align}
&\Pb \Big( \max_{j \in [p]} \, \norm{\bQ\bQ^T(\bZ_{\cdot j} - \rho \bX_{\cdot j}) }_{2}^2  \ge \ell C(K+1)^2 + t \Big)
\\ & \quad \le p  \cdot \exp\Big(-c\min\Big( \frac{t^2}{C(K+1)^4 \ell}, \frac{t}{C(K+1)^2} \Big)\Big). 
\end{align}
%
%where $\tsigma^2 = \sigma^2\rho + \rho(1-\rho)$. 
%
% {\color{red}
% Subsequently, it follows that 
% %
% \begin{align}
% %
% \Ex\Big[ \max_{j \in [p]} \, \norm{\bQ\bQ^T(\bZ_{\cdot j} - \rho \bX_{\cdot j}) }_{2}^2 \Big] & \leq \ell \tsigma^2 + C_1 {\log p} + C_2 \sqrt{\ell} (1+K)^2,
% %
% \end{align}
% %
% for large enough constants $C_1, C_2 > 0$. }
%
This completes the proof of Lemma \ref{lem:projection}.

% \section{Proof of Corollary \ref{cor:param_est}.}
%
% By Assumption \ref{assumption:balanced_spectra}, we have $s_r = \Theta(\sqrt{np}/\sqrt{r})$. 
% %
% We also have $\rho \geq C(K, \gamma) \frac{\sqrt{r}  \log(np)}{\sqrt{n \wedge p}}$.
% %
% Therefore, 
% %
% \begin{align}
% %
% C(K, \gamma) \frac{\sqrt{n} + \sqrt{p}}{\rho} & \le \frac{ \sqrt{np}}{  \sqrt{r} \log(np)}.
% %
% \end{align}
% %
% It then follows from \eqref{eq:thm1.aux} that $\hs_r = \Theta(s_r) = \Theta(\sqrt{np}/\sqrt{r})$. 
% %
% We also have $\|\btbeta^*\|_1^2 \leq p \|\btbeta^*\|_2^2$ by standard norm inequality.  
% %
% Using these in \eqref{eq:thm1.main} completes the proof of Corollary \ref{cor:param_est}.  
% %
% \begin{align}\label{eq:cor1.1}
% %
% \| \bhbeta - \btbeta^*\|_2^2 & \leq  C^{''}(K, \gamma)  
% \Big[ \frac{1}{\min(n, p)} \|\btbeta^*\|_2^2+  \frac{1}{\rho^2}  \frac{r \log(np)}{\min(n, p)} \|\btbeta^*\|_2^2 \Big] \nonumber \\
% %
% & \qquad \qquad + C^{'} \frac{(r + \log(np)) \sigma^2 + \sigma \sqrt{\log(np)} \sqrt{r}}{np} + C^{''} \frac{\sigma \sqrt{\log(np)}}{\sqrt{np}} \|\btbeta^*\|_2. 
% %
% \end{align}
% %
% This .

\section{Proof of Theorem \ref{thm:test_error}}\label{sec:proof_thm_4.2}
Recall that $\bXp$ and $\bZp$ denote the latent and observed testing covariates, respectively. 
We denote the SVD of the former as $\bXp = \bUp \bSp \bVp^T$.
Let $s'_{\ell}$ be the $\ell$-th singular value of $\bXp$.
Further, recall that $\btZp = (1/\hrho^\prime) \bZp$, and its rank $\ell$ truncation is denoted as $\btZ^{\prime \ell}$. 
Our interest is in bounding $\| \btZp{}^\ell \bhbeta - \bXp \btbeta^*\|_2$. Towards this, consider
\begin{align}\label{eq:thm2.1}
\| \btZp{}^\ell \bhbeta - \bXp \btbeta^*\|_2^2 & = \| \btZp{}^\ell \bhbeta - \btZp{}^\ell \btbeta^* + \btZp{}^\ell \btbeta^* - \bXp \btbeta^*\|_2^2\nonumber \\
& \leq 2 \| \btZp{}^\ell \big( \bhbeta - \btbeta^* \big) \|_2^2 + 2 \| (\btZp{}^\ell - \bXp) \btbeta^*\|_2^2.
\end{align}
We shall bound the two terms on the right hand side of \eqref{eq:thm2.1} next. 

\vspace{10pt} \noindent
{\em Bounding $ \| \btZp{}^\ell \big( \bhbeta - \btbeta^* \big) \|_2^2$. } Since $\btZp{}^\ell=  (1/\hrho^\prime) \bZp{}^\ell$, we have 
\begin{align}\label{eq:thm2.2}
 \| \btZp{}^\ell \big( \bhbeta - \btbeta^* \big)  \|_2^2 & = \frac{1}{(\hrho^\prime)^2} \|\bZp{}^\ell \big( \bhbeta - \btbeta^* \big)  \|_2^2 \nonumber \\
 & = \frac{1}{(\hrho^\prime)^2} \| \big(\bZp{}^\ell - \rho \bXp + \rho \bXp\big) \big( \bhbeta - \btbeta^* \big)  \|_2^2 \nonumber \\
 & \leq \frac{2}{(\hrho^\prime)^2} \|\big(\bZp{}^\ell - \rho \bXp) \big( \bhbeta - \btbeta^* \big)  \|_2^2 + 2\Big(\frac{\rho}{\hrho^\prime}\Big)^2  \|\bXp \big( \bhbeta - \btbeta^* \big) \|_2^2. 
\end{align}
Now, note that $\| \bZp - \bZp{}^\ell\|_2$ is
the $(\ell+1)$-st largest singular value of $\bZp$. 
Therefore, by Weyl's inequality (Lemma \ref{lemma:weyl}), we have for any $\ell \geq \rp$,  
\begin{align}\label{eq:thm2.3}
\| \bZp - \bZp{}^\ell\|_2& \leq \|\bZp - \rho \bXp\|_2.
\end{align} 
In turn, this gives
\begin{align}\label{eq:thm2.4}
\| \bZp{}^\ell - \rho \bXp\|_2 & \leq \| \bZp{}^\ell - \bZp\|_2 + \|\bZp - \rho \bXp\|_2 \leq 2 \|\bZp - \rho \bXp\|_2.
\end{align}
Thus, we have 
\begin{align}\label{eq:thm2.5}
\|\big(\bZp{}^\ell - \rho \bXp) \big( \bhbeta - \btbeta^* \big)  \|_2^2 & \leq 4 \|\bZp - \rho \bXp\|_2^2 ~ \| \bhbeta - \btbeta^* \|_2^2.
\end{align}
Recall that $\bH$ and $\bH_\perp$ span the rowspace and nullspace of $\bX$, respectively; similarly, recall that $\bHp$ and $\bHp_\perp$ are defined analogously with respect to $\bXp$. 
As a result, 
\begin{align}\label{eq:thm2.6}
\|\bXp \big( \bhbeta - \btbeta^* \big) \|_2^2 
& = \| \bXp (\bH + \bH_{\perp}) \big( \bhbeta - \btbeta^* \big) \|_2^2 \nonumber 
\\
& \le 2 \| \bXp \bH \big( \bhbeta - \btbeta^* \big) \|_2^2 + 
2 \| \bXp \bH_\perp \big( \bhbeta - \btbeta^* \big) \|_2^2\nonumber 
\\
& \leq 2 \| \bXp\|_2^2 ~ \| \bH \big( \bhbeta - \btbeta^* \big) \|_2^2 + 2 \| \bXp\|_2^2 ~ \| \bHp \bH_\perp \big( \bhbeta - \btbeta^* \big) \|_2^2.
\end{align}
Let $\bhH_r = \bhV_r \bhV_r^T$ denote the projection matrix onto the rowspace of $\btZ^r$. 
Thus, 
\begin{align}\label{eq:thm2.7}
\| \bH \big( \bhbeta - \btbeta^* \big) \|_2^2 & = 
\| \big(\bH -\bhH_r + \bhH_r\big) \big( \bhbeta - \btbeta^* \big) \|_2^2 \nonumber \\
& \leq 2 \| \bH -\bhH_r\|_2^2 ~ \|\bhbeta - \btbeta^* \|_2^2 + 2 \| \bhH_r\big( \bhbeta - \btbeta^* \big) \|_2^2.
\end{align}
From \eqref{eq:thm1.2} and above, we obtain 
\begin{align}\label{eq:thm2.8}
\| \bH \big( \bhbeta - \btbeta^* \big) \|_2^2 & \leq 
C \| \bH -\bhH_r\|_2^2 ~ \|\bhbeta - \btbeta^* \|_2^2  \nonumber \\
& \qquad  +\frac{C}{\hs_r^2} \Big( \| \bX - \btZ^r \|_{2,\infty}^2 \|\btbeta^*\|_1^2 +\langle \btZ^r (\bhbeta - \btbeta^*), \bvarepsilon\rangle \Big).
\end{align}
Thus, 
\begin{align}
\|\bXp \big( \bhbeta - \btbeta^* \big) \|_2^2 & \leq
C \|\bXp\|_2^2 ~\| \bH -\bhH_r\|_2^2 ~\|\bhbeta - \btbeta^* \|_2^2  \nonumber \\
& \qquad  +\frac{C\|\bXp\|_2^2 }{\hs_r^2} \Big( \| \bX - \btZ^r \|_{2,\infty}^2 \|\btbeta^*\|_1^2 +\langle \btZ^r (\bhbeta - \btbeta^*), \bvarepsilon\rangle \Big)
\\ &\qquad + C \| \bXp\|_2^2 ~ \| \bHp \bH_\perp  \|_2^2 ~\| \bhbeta - \btbeta^* \|_2^2. \label{eq:thm2.6.a}
\end{align}
In summary, plugging \eqref{eq:thm2.5} and \eqref{eq:thm2.6.a} into \eqref{eq:thm2.2}, we have 
\begin{align}
 \| \btZ^{\prime, \ell} \big( \bhbeta - \btbeta^* \big)  \|_2^2 & \leq %
 \frac{C}{(\hrho^\prime)^2}   \|\bZp - \rho \bXp\|_2^2 ~ \| \bhbeta - \btbeta^* \|_2^2  \nonumber \\
%& \qquad 
&\quad + C \Big(\frac{\rho}{\hrho^\prime}\Big)^2 \|\bX'\|_2^2 ~ \| \bH -\bhH_r\|_2^2 ~ \|\bhbeta - \btbeta^* \|_2^2  \nonumber \\
& \quad
	+  \frac{C\rho^2\|\bX'\|_2^2}{(\hrho^\prime)^2 \hs_r^2} \Big( \| \bX - \btZ^r \|_{2,\infty}^2 \|\btbeta^*\|_1^2 +\langle \btZ^r (\bhbeta - \btbeta^*), \bvarepsilon\rangle \Big)
	\\
	&\quad + C \Big(\frac{\rho}{\hrho^\prime}\Big)^2 \| \bXp\|_2^2 ~ \| \bHp \bH_\perp  \|_2^2 ~\| \bhbeta - \btbeta^* \|_2^2. \label{eq:thm2.9}
 \end{align}
 
\vspace{10pt} \noindent
{\em Bounding $ \| (\btZp{}^\ell - \bXp) \btbeta^*\|_2^2$. } Using inequality \eqref{eq:2.inf.ineq},
\begin{align}\label{eq:thm2.14}
\| (\btZp{}^\ell - \bXp) \btbeta^*\|_2^2 & \leq \|\btZp{}^\ell - \bXp\|_{2,\infty}^2 \|\btbeta^*\|_1^2. 
\end{align}

\vspace{10pt} \noindent
{\em Combining. } 
Incorporating \eqref{eq:thm2.9} and \eqref{eq:thm2.14} into \eqref{eq:thm2.1} with $\ell = \rp$ yields
\begin{align}\label{eq:thm2.1_new}
\| \btZ^{\prime \rp} \bhbeta - \bXp \btbeta^*\|_2^2 
&\leq \Delta_1 + \Delta_2 + \Delta_3, 
\end{align}
where
\begin{align*}
\Delta_1 &= 
\frac{C}{(\hrho^\prime)^2}   \|\bZp - \rho \bXp\|_2^2 ~ \| \bhbeta - \btbeta^* \|_2^2 + C \left( \frac{\rho s^{\prime}_1}{\hrho^\prime} \right)^2  \| \bH -\bhH_r\|_2^2 ~ \|\bhbeta - \btbeta^* \|_2^2  \nonumber \\
& \quad   + 2 \| \bXp - \btZ^{\prime \rp} \|_{2,\infty}^2 \|\btbeta^*\|_1^2, \\
\Delta_2 &= 
C \left( \frac{\rho s^\prime_1}{\hrho^\prime \hs_r} \right)^2 \left( \| \bX - \btZ^r \|_{2,\infty}^2 \|\btbeta^*\|_1^2 +\langle \btZ^r (\bhbeta - \btbeta^*), \bvarepsilon\rangle \right), 
\\
\Delta_3 &= C \left( \frac{\rho s^{\prime}_1}{\hrho^\prime} \right)^2  \| \bHp \bH_\perp \|_2^2 ~ \|\bhbeta - \btbeta^* \|_2^2. 
\end{align*}
Note that \eqref{eq:thm2.1_new} is a deterministic bound. 
We will now proceed to bound $\Delta_1$ and $\Delta_2$, first in high probability then in expectation. 

% HIGH PROBABILITY BOUND
\vspace{10pt} \noindent
{\em Bound in high-probability. }
We first bound $\Delta_1$. 
First we note that by adapting Lemma \ref{lem:est.rho} with $\hrho^\prime$ in place of $\hrho$, we obtain
w.p. at least $1-O(1/(mp)^{10})$, 
\begin{align} \label{eq:rho_stuff}
    \rho/2 \leq \hrho^\prime \leq \rho.  
\end{align} 
By adapting Lemma \ref{lem.perturb.1} for $\bZp, \bXp$ in place of $\bZ, \bX$, we have w.p. at 
least $1-O(1/(mp)^{10})$, 
\begin{align}
\| \bZp - \rho \bXp \|_2 & \leq C(K+1)(\gamma+1) (\sqrt{m} + \sqrt{p}).
\end{align}
Hence, using Theorem \ref{thm:param_est} and  \eqref{eq:rho_stuff}, we have w.p. at least $1-O(1/((n \wedge m) p)^{10})$
\begin{align} 
& \frac{1}{ (\hrho^\prime)^2 m}\| \bZp - \rho \bXp \|^2_2 ~ \| \bhbeta - \btbeta^* \|_2^2
\\ & \leq 
C(K, \gamma, \sigma) 
\frac{ \log (np)}{ \rho^2 }  
\left( 1 \vee \frac{p}{m} \right)
\cdot
\left\{
	\frac{ \| \btbeta^* \|_2^2}{\snr^2}
	+ 
	\frac{ \sqrt{n} \| \btbeta^* \|_1}{(n+p) \snr^2}
	+
	\frac{r \| \btbeta^* \|_1^2}{(n+p) \snr^2}
	+
	\frac{\| \btbeta^* \|_1^2}{\snr^4}
\right\}, \label{eq:thm2.10.useful}
%
% \\ & \leq 
% C(K, \gamma, \sigma) 
% %
% \frac{ \log (np)}{ \rho^2 }  
% \cdot
% \left\{
% 	\frac{\left(1 \vee \frac{p}{m} \right) \| \btbeta^* \|_2^2}{\snr^2}
% 	+ 
% 	\frac{ \sqrt{n} \| \btbeta^* \|_1}{\snr^2}
% 	+
% 	\frac{r \| \btbeta^* \|_1^2}{\snr^2}
% 	+
% 	\frac{\left(1 \vee \frac{p}{m} \right) \| \btbeta^* \|_1^2}{\snr^4}
% \right\}, 
% \label{eq:thm2.10}
\end{align}
where $C(K, \gamma, \sigma) = C (K+1)^6 (\gamma+1)^4 (\sigma^2 + 1)$. 
%
% To arrive at \eqref{eq:thm2.10}, we use the bound
% %
% \begin{align}
% 	\frac{1 \vee \frac{p}{m}}{n + p} \le \frac{1}{(n + p) \wedge m} \le 1. 
% \end{align} 
%
Next, observe that $s'_1 = O(\sqrt{mp})$, which follows from Assumption \ref{assumption:bounded}.
Using this bound and recalling Lemma \ref{lem:thm1.1}, \eqref{eq:snr_basic_ineq}, and Theorem \ref{thm:param_est}, it follows that w.p. at least $1-O(1/(np)^{10})$
\begin{align} 
&\left( \frac{\rho s^{\prime}_1}{\hrho^\prime} \right)^2 \frac{1}{m}\| \bH - \bhH_r \|_2^2 ~ \| \bhbeta - \btbeta^* \|_2^2 
\\ & \qquad \leq C(K, \gamma, \sigma) \log(np)
\cdot
\left\{
	\frac{p \| \btbeta^* \|_2^2}{\snr^4}
	+ 
	\frac{ \sqrt{n} p \| \btbeta^* \|_1}{(n+p) \snr^4}
	+
	\frac{r p \| \btbeta^* \|_1^2}{(n+p) \snr^4}
	+
	\frac{p \| \btbeta^* \|_1^2}{\snr^6}
\right\}
\\ & \qquad \leq C(K, \gamma, \sigma) \log(np)
\cdot
\left\{
	\frac{p \| \btbeta^* \|_2^2}{\snr^4}
	+ 
	\frac{ \sqrt{n} \| \btbeta^* \|_1}{ \snr^4}
	+
	\frac{r  \| \btbeta^* \|_1^2}{ \snr^4}
	+
	\frac{p \| \btbeta^* \|_1^2}{\snr^6}
\right\}. 
\label{eq:thm2.11}
\end{align}
Next, we adapt Lemma~\ref{lem:thm1.2} for $\btZp, \bXp$ in place of $\btZ, \bX$ with $\ell = \rp$. %
If $\rho \geq c (mp)^{-1} \log^2 (mp)$, then w.p. at least $1-O(1/(m p)^{10})$
\begin{align}
&\frac{1}{m}\| \bXp - \btZ^{\prime \rp} \|_{2,\infty}^2 \|\btbeta^*\|_1^2 \\
& \quad \le 
\frac{C(K, \gamma)}{m} \cdot
\left\{ \frac{(m+p)(m +\sqrt{m}\, \log(mp))}{\rho^4 (s'_r)^2 } + \frac{r' + \sqrt{r'}\, \log(mp)}{\rho^2}  + C \frac{\log(mp)}{\rho \, p} \right \}\|\btbeta^*\|_1^2 \\
&\quad \le 
\frac{C(K, \gamma) \log(mp)}{\rho^2}  \cdot 
\left\{ \frac{1}{\snr_{\test}^2 } + \frac{r'}{m}\right\} \|\btbeta^*\|_1^2, \label{eq:test_de_noise_cov}
\end{align}
where $C(K, \gamma) = C(K+1)^4(\gamma+1)^2$. 
Note that the above uses the inequality
$$ \frac{m + p}{\rho^2(s'_{r'})^2} \le \frac{1}{\snr_{\test}^2},$$ which follows from the definition of $\snr_{\test}^2$ in~\eqref{eq:snr_test}.

%%
%Hence, by using \eqref{eq:thm2.10}, \eqref{eq:thm2.11}, \eqref{eq:test_de_noise_cov}, we conclude that w.p. at least $1-O(1/((n \wedge m) p)^{10})$,
%\begin{align} 
%%
%\frac{\Delta_1}{m} &\le 
%C(K, \gamma, \sigma) \log((n \vee m)p) 
%\Bigg[ 
%%
%\frac{r(1+\frac{p}{m}) }{ \rho^2 \snr^2} \| \btbeta^* \|_2^2 
%%
%+ \frac{1}{\snr^4} \left( \left(\frac{1}{\rho^2} + \frac{p}{\rho^2 m} \right)\|\btbeta^*\|_1^2
%+ rp \| \btbeta^* \|_2^2 \right) 
%%
%\\ &\qquad \qquad \qquad  \qquad \qquad \qquad
%%
%+ \frac{p  }{\snr^6} \| \btbeta^* \|_1^2
%+ \left( \frac{1}{\rho^2 \snr_\test^2} + \frac{r'}{\rho^2 m} \right) \|\btbeta^*\|_1^2
%\Bigg]. \label{eq:Delta1.1} 
%%
%\end{align}
%
Next, we bound $\Delta_2$. 
As per \eqref{eq:thm1.f.1}, we have w.p. at least $1-O(1/(np)^{10})$,
\begin{align}\label{eq:thm2.12}
& \| \bX - \btZ^r \|_{2,\infty}^2 \|\btbeta^*\|_1^2 
+\langle \btZ^r (\bhbeta - \btbeta^*), \bvarepsilon\rangle \nonumber \\
& \leq 
C \| \bX - \btZ^r \|_{2,\infty}^2  \|\btbeta^*\|_1^2 
+  C \sigma^2 (r + \log(np)) + C \sigma \sqrt{n\log(np)} \|\btbeta^*\|_1.
\end{align}
Recalling Lemma \ref{lem:thm1.2} and the definition of $\snr$, we have that w.p. at least $1-O(1/(np)^{10})$, 
\begin{align}\label{eq:thm2.16}
\| \bX - \btZ^r \|_{2,\infty}^2 
& \leq 
C(K, \gamma) \frac{\log(np)}{\rho^2} 
\cdot \left\{ \frac{n}{\snr^2 } + r \right\}.
\end{align}
Using \eqref{eq:singualr_val_emp_pop}, \eqref{eq:snr_basic_ineq}, \eqref{eq:rho_stuff}, we have
\begin{align}\label{eq:conditioning_num_bound}
    \left( \frac{\rho s^\prime_1}{\hrho^\prime \hs_r} \right)^2
    \le  \frac{C (s^\prime_1)^2 \rho^2}{\snr^2 (n + p)}. 
\end{align}
Therefore,~\eqref{eq:rho_stuff},~\eqref{eq:thm2.12},~\eqref{eq:thm2.16},~\eqref{eq:conditioning_num_bound}, and the bound $s'_1 = O(\sqrt{mp})$ altogether imply that w.p. at least $1-O(1/((n \wedge m) p)^{10})$, 
\begin{align} 
\frac{\Delta_2}{m} &\le 
\frac{C(K, \gamma) (s'_1)^2 \log(np) \rho^2}{ m (n+p) \snr^2}
\cdot
\left\{ 
	\frac{\sigma^2 r \| \btbeta^* \|_1^2}{\rho^2} 
	+ \sigma \sqrt{n} \| \btbeta^* \|_1
	+ \frac{n \| \btbeta^* \|_1^2}{\rho^2 \snr^2}
\right\}
\\ &\le C(K, \gamma, \sigma) \log(np)
\cdot 
\left\{ 
	\frac{r \| \btbeta^* \|_1^2}{\snr^2} 
	+ \frac{\sqrt{n} \| \btbeta^* \|_1}{\snr^2}
	+ \frac{n \| \btbeta^* \|_1^2}{\snr^4} 
\right \},  \label{eq:Delta2.1} 
\end{align}
where $C(K, \gamma, \sigma)$ is defined as in \eqref{eq:thm2.10.useful}. 
Moving on, we observe $\| \btbeta^* \|_2 \le  \| \btbeta^* \|_1$, and recall the assumption $\snr \geq C(K+1)(\gamma+1)$.
With these in mind, we incorporate \eqref{eq:thm2.10.useful}, \eqref{eq:thm2.11}, \eqref{eq:test_de_noise_cov}, and \eqref{eq:Delta2.1} into \eqref{eq:thm2.1_new} and simplify to establish
\begin{align}
	&\frac{\Delta_1 + \Delta_2}{m} 
	\le 
	C(K, \gamma, \sigma) \log((n \vee m)p)  
	\\
	&  \qquad \qquad \quad ~
    \cdot \Bigg\{
    \frac{\sqrt{n}}{\snr^2}\| \btbeta^* \|_1
    + 
    \left(
    	\frac{r \left(1 \vee \frac{p}{m} \right)}{\rho^2 \snr^2}
	+
	\frac{r'}{\snr^2_\test \wedge m}
	+ 
     	\frac{ n \vee p }{\snr^4}
    \right) \| \btbeta^* \|_1^2
    \Bigg\}. \label{eq:delta1_delta2} 
\end{align}
Finally, we bound $\Delta_3$.
Following the arguments that led to \eqref{eq:thm2.11}, we obtain w.p. at least $1-O(1/(np)^{10})$
\begin{align}
	\frac{\Delta_3}{m} &\le C \cdot p \cdot \delta_\beta \cdot \| \bHp \bH_\perp \|_2^2. \label{eq:thm2_final_6}
\end{align} 
Combining \eqref{eq:delta1_delta2} and \eqref{eq:thm2_final_6} concludes the high-probability bound.

%and recalling the definition of $\Delta_1, \Delta_2$ as in the statement of Theorem \ref{thm:test_error} concludes the high-probability bound. 

% IN EXPECTATION
\vspace{10pt} \noindent
{\em Bound in expectation.} 
Here, we assume that $\{\langle \bx_i, \bbeta^* \rangle \in [-b,b]: i > n\}$. 
As such, we enforce $\{\hy_i \in [-b,b]: i > n\}$.
With \eqref{eq:thm2.1_new}, this yields
\begin{align}\label{eq:thm4.2_1}
\text{MSE}_{\test} \le \frac{1}{m}\| \btZ^{\prime \rp} \bhbeta - \bXp \btbeta^*\|_2^2 
\le 
\frac{1}{m}\left( \Delta_1 + \Delta_2 + \Delta_3 \right).
\end{align}
We define $\Ec$ as the event such that the bounds in 
\eqref{eq:rho_stuff}, \eqref{eq:thm2.10.useful}, \eqref{eq:thm2.11}, \eqref{eq:test_de_noise_cov}, \eqref{eq:thm2.16}, and Lemma \ref{eq:lem.thm.1.3.0} hold. 
Thus, if $\Ec$ occurs, then combining \eqref{eq:thm2.10.useful}, \eqref{eq:thm2.11}, \eqref{eq:test_de_noise_cov},
and using the property $\| \btbeta^* \|_2 \le  \| \btbeta^* \|_1$ and assumption $\snr \geq C(K+1)(\gamma+1)$ gives  
\begin{align} 
\frac{\Ex[\Delta_1 | \cE]}{m}
&\le 
C(K, \gamma, \sigma) \log(n_{\max} p) 
\cdot 
\Bigg\{
\frac{\sqrt{n}}{\snr^2} \left( \frac{1}{\snr^2} + \frac{1 \vee \frac{p}{m}}{\rho^2 (n+p)} \right) \| \btbeta^* \|_1
\\
&\qquad \qquad  \qquad \qquad \qquad\qquad
    + \left(
    	\frac{r \left(1 \vee \frac{p}{m} \right)}{\rho^2 \snr^2}
	+
	\frac{r'}{\snr^2_\test \wedge m}
	+ 
     	\frac{ n \vee p }{\snr^4}
    \right) \| \btbeta^* \|_1^2
    \Bigg\}. 
\label{eq:delta_1_expectation}
\end{align}
Next, we bound $\Ex[\Delta_2 | \cE]$.
To do so, observe that $\bvarepsilon$ is independent of the event $\cE$. Thus, by \eqref{eq:thm1.lem3.0}, we have
\begin{align}
\Ex[ \langle \btZ^r (\bhbeta - \btbeta^*), \bvarepsilon\rangle | \cE]
   &= \Ex[\langle  \bhU_r\bhU_r^T \bX \btbeta^*, \bvarepsilon\rangle + \langle  \bhU_r\bhU_r^T \bvarepsilon, \bvarepsilon\rangle - \langle  \bhU_r\bhS_r \bhV_r^T\btbeta^*, \bvarepsilon \rangle | \cE] \nonumber\\
   &= \Ex[ \langle  \bhU_r\bhU_r^T \bvarepsilon, \bvarepsilon\rangle | \cE]
   \le C \sigma^2 r.
\end{align}
Combining the above inequality with \eqref{eq:thm2.16},
\begin{align} 
\frac{\Ex[\Delta_2 | \cE]}{m} &\le 
C(K, \gamma, \sigma) \log(np)
\cdot
\left\{
	\left(
	\frac{r}{\snr^2}
	+ \frac{n}{\snr^4} 
	\right) \| \btbeta^* \|_1^2
\right\}. \label{eq:Delta2_exp}
\end{align} 
Next, \eqref{eq:thm2_final_6} yields  
\begin{align}
	\frac{\Ex[\Delta_3 | \cE]}{m} &\le C \cdot p \cdot \delta_\beta 
	\cdot \| \bHp \bH_\perp \|_2^2. \label{eq:Delta3_exp}
\end{align} 
Due to truncation, observe that $\text{MSE}_\test$ is always bounded above by $4b^2$. 
Thus,
\begin{align}\label{eqref:thm4.2_exp_meta}
\Ex[\text{MSE}_{\test}] 
&\le \Ex[\text{MSE}_{\test} | \cE] + \Ex[\text{MSE}_{\test} | \cE^c] ~ \Pb(\cE^c) \\
&\le \frac{1}{m} \Ex[\Delta_1 + \Delta_2 + \Delta_3 | \cE] + {C b^2}
\left(1/(np)^{10} + 1/(mp)^{10} \right).
\end{align}
Plugging \eqref{eq:delta_1_expectation}, \eqref{eq:Delta2_exp}, \eqref{eq:Delta3_exp} into \eqref{eqref:thm4.2_exp_meta} and simplifying completes the proof.

%******************************************
% CONCENTRATION INEQUALITIES
%******************************************
\section{Helpful Concentration Inequalities} \label{sec:proofs_helpful_lemmas} 
In this section, we state and prove a number of helpful concentration inequalities used to establish our primary results.

% mgf
\begin{lemma}\label{lem:MGF_upper}
	Let $X$ be a mean zero, sub-gaussian random variable. Then for any $\lambda \in \Rb$, 
	\[	\Ex \exp\left( \lambda X \right) \leq \exp\left( C \lambda^2  \norm{X}_{\psi_{2}}^2 \right).	\]
\end{lemma}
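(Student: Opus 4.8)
The plan is to route through polynomial moment bounds and a Taylor expansion of the moment generating function, which will deliver the claim uniformly in $\lambda$. Write $K = \|X\|_{\psi_2}$. First I would convert the $\psi_2$-bound into moment control: by the definition of the Orlicz norm, $\Ex \exp(X^2/K^2) \le 2$, so Markov's inequality yields the sub-gaussian tail $\Pb(|X| \ge s) \le 2\exp(-s^2/K^2)$. Integrating $\Ex|X|^p = \int_0^\infty p s^{p-1}\Pb(|X|\ge s)\,ds$ against this tail produces the standard moment estimate $\Ex|X|^p \le (C_1 K \sqrt{p})^p$ for every integer $p \ge 1$, with $C_1$ an absolute constant.

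Next I would expand the exponential and use the mean-zero assumption to discard the first-order term:
\[
\Ex\exp(\lambda X) = 1 + \lambda \Ex[X] + \sum_{p \ge 2}\frac{\lambda^p \Ex[X^p]}{p!} = 1 + \sum_{p \ge 2}\frac{\lambda^p \Ex[X^p]}{p!}.
\]
Bounding $|\Ex[X^p]| \le \Ex|X|^p \le (C_1 K\sqrt p)^p$ and using Stirling's bound $p! \ge (p/e)^p$, the even-indexed terms $p = 2k$ satisfy $\Ex[X^{2k}]/(2k)! \le (C_1 K e)^{2k}/(2k)^k \le \big((C_1 K e)^2/2\big)^k / k!$, since $(2k)^k \ge 2^k k!$; summing $\sum_k (\lambda^2(C_1 K e)^2/2)^k/k!$ gives the clean bound $\exp\big(\lambda^2 (C_1 K e)^2/2\big)$. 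The odd-indexed terms I would control by Cauchy--Schwarz (or AM--GM) against their neighbouring even moments, so that they too are absorbed into an expression of the form $\exp(C\lambda^2 K^2)$.

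The main obstacle is obtaining the bound \emph{for all} $\lambda \in \Rb$ rather than only for small $|\lambda|$. The tempting shortcut --- the numeric inequality $e^x \le x + e^{x^2}$, which together with $\Ex[X]=0$ would give $\Ex\exp(\lambda X) \le \Ex\exp(\lambda^2 X^2)$ --- fails in the large-$\lambda$ regime, because $X^2$ is only sub-exponential and $\Ex\exp(\lambda^2 X^2)$ diverges once $\lambda^2 K^2$ exceeds a fixed threshold. The Taylor-series route sidesteps this: the resulting series $\sum_{p\ge 2}(b/\sqrt p)^p$ with $b \asymp |\lambda| K$ converges for every $\lambda$, its terms peaking near $p \approx b^2/e$ at height $\approx \exp(b^2/(2e))$ and decaying super-geometrically on either side, so the whole sum is $\le \exp(C b^2)$. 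Verifying this convergence and handling the odd moments is the only genuinely delicate step; collecting the absolute constants then yields $\Ex\exp(\lambda X) \le \exp(C\lambda^2 \|X\|_{\psi_2}^2)$, completing the proof.
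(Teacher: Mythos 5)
Your proposal is correct. The paper does not actually prove this lemma --- it is stated without proof as a standard fact (it is the sub-gaussian MGF characterization, e.g.\ Proposition 2.5.2 of \cite{vershynin2018high}, which is the reference the paper leans on throughout its concentration arguments). Your argument is the classical moment-based proof of that fact: tail bound from the Orlicz-norm definition, the moment estimate $\Ex|X|^p \le (C_1 K\sqrt{p})^p$, Taylor expansion of the MGF with the linear term killed by $\Ex[X]=0$, and Stirling to sum the series. Your even-index computation is exactly right (the inequality $(2k)^k \ge 2^k k!$ does the work), and you correctly flag that the shortcut $e^x \le x + e^{x^2}$ only survives in the regime $|\lambda|\,\|X\|_{\psi_2} \lesssim 1$. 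Two small points to tie off: (i) the interchange of expectation and infinite sum should be justified by noting $\Ex\exp(|\lambda X|) < \infty$ (which follows from the same tail bound), and (ii) the odd-index terms are most cleanly absorbed via $|\lambda X|^{2k+1} \le \tfrac12\left(|\lambda X|^{2k} + |\lambda X|^{2k+2}\right)$, which folds them into the even sums you already control. A slightly slicker alternative that avoids the series altogether is the two-regime argument: for $|\lambda| K \le 1$ use $e^x \le x + e^{x^2}$ and the sub-exponentiality of $X^2$; for $|\lambda| K > 1$ use $\lambda X \le \tfrac{\lambda^2 K^2}{2} + \tfrac{X^2}{2K^2}$ together with $\Ex\exp(X^2/K^2) \le 2$. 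Either route yields the stated constant-$C$ bound.
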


% independence
\begin{lemma}\label{lem:ind_sum}
	Let $X_1, \dots, X_n$ be independent, mean zero, sub-gaussian random variables. Then,
	\[	\| \sum_{i=1}^n X_i \|^2_{\psi_{2}}  \leq C  \sum_{i=1}^n  \norm{ X_i }_{\psi_{2}}^2 .	\]
\end{lemma}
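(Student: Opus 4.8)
The plan is to use the moment generating function (MGF) characterization of the sub-gaussian norm, exploiting independence to factorize the MGF of the sum. This reduces the whole statement to Lemma~\ref{lem:MGF_upper} together with its (standard) converse.

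First I would write $S = \sum_{i=1}^n X_i$ and fix an arbitrary $\lambda \in \Rb$. Since $X_1, \dots, X_n$ are independent, the MGF factorizes as
\[
\Ex \exp(\lambda S) = \prod_{i=1}^n \Ex \exp(\lambda X_i).
\]
Next, since each $X_i$ is mean zero and sub-gaussian, Lemma~\ref{lem:MGF_upper} gives $\Ex \exp(\lambda X_i) \le \exp(C\lambda^2 \norm{X_i}_{\psi_2}^2)$. Multiplying these bounds over $i$,
\[
\Ex \exp(\lambda S) \le \exp\Big(C\lambda^2 \sum_{i=1}^n \norm{X_i}_{\psi_2}^2\Big),
\]
which holds for every $\lambda \in \Rb$.

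Finally I would observe that $S$ is mean zero, and that an MGF bound of the form $\Ex\exp(\lambda S) \le \exp(C\lambda^2 K^2)$ holding for all $\lambda$ is equivalent, up to absolute constants, to the sub-gaussian norm bound $\norm{S}_{\psi_2} \le C'K$. This is precisely the converse to Lemma~\ref{lem:MGF_upper} and is a standard equivalence among the characterizations of sub-gaussianity (e.g.\ Proposition~2.5.2 of \cite{vershynin2018high}). Reading off $K^2 = \sum_{i=1}^n \norm{X_i}_{\psi_2}^2$ yields the claimed inequality.

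The main obstacle — indeed essentially the only non-mechanical point — is that the argument invokes the reverse implication ``MGF control implies $\psi_2$ control,'' which is not stated among the lemmas in the excerpt but is a textbook fact. One must also take care that this reverse direction uses the mean-zero hypothesis (satisfied here, since a sum of mean-zero variables is mean zero) together with the MGF bound across all $\lambda$; both hold by construction, so no additional work is needed.
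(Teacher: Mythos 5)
Your proof is correct: the paper states Lemma~\ref{lem:ind_sum} without proof (it is listed among the helpful concentration inequalities as a standard fact), and your argument---factorizing the MGF by independence, applying Lemma~\ref{lem:MGF_upper} termwise, and then invoking the converse MGF-to-$\psi_2$ implication for the centered sum (Proposition~2.5.2 of \cite{vershynin2018high})---is exactly the canonical route (cf.\ Proposition~2.6.1 there) that the paper implicitly relies on. You also correctly identify the one point requiring care, namely that the reverse implication needs the mean-zero hypothesis, which the sum inherits from its summands.
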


% Bernstein
\begin{thm} [Bernstein's inequality] \label{thm:bernstein}
Let $X_1, \dots, X_n$ be independent, mean zero, sub-exponential random variables. Then, for every $t \ge 0$, we have
\begin{align*}
	\Pb \Big( \Big| \sum_{i=1}^n X_i \Big| \ge t \Big) &\le 2 \exp \Big( - c \min \Big( \frac{t^2}{\sum_{i=1}^n \norm{X_i }_{\psi_1}^2}, \frac{t}{\max_i \norm{ X_i }_{\psi_1}} \Big) \Big),
\end{align*}	
where $c > 0$ is an absolute constant. 
\end{thm}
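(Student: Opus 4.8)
The plan is to establish the bound via the exponential-moment (Chernoff) method: I would reduce the tail estimate to a bound on the moment generating function (MGF) of the sum, and then optimize the free exponential parameter over an admissible interval. First I would record the single-variable MGF estimate underlying the sub-exponential setting: for a mean-zero random variable $X$ with $\norm{X}_{\psi_1} \le K$, there exist absolute constants $C, c > 0$ such that $\Ex \exp(\lambda X) \le \exp(C \lambda^2 K^2)$ whenever $|\lambda| \le c/K$. This is the sub-exponential analogue of Lemma \ref{lem:MGF_upper}; it follows by expanding the exponential into its Taylor series, controlling the moments $\Ex|X|^p \le (CK)^p\, p!$ implied by the $\psi_1$-norm, and summing the resulting geometric-type series, which converges precisely when $|\lambda|$ is small relative to $1/K$.

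Writing $S = \sum_{i=1}^n X_i$, $K_i = \norm{X_i}_{\psi_1}$, $v = \sum_{i=1}^n K_i^2$, and $K_* = \max_i K_i$, independence of the $X_i$ gives $\Ex\exp(\lambda S) = \prod_{i=1}^n \Ex \exp(\lambda X_i) \le \exp(C \lambda^2 v)$, valid for all $|\lambda| \le c/K_*$ (the most restrictive of the $n$ single-variable constraints). Markov's inequality applied to $\exp(\lambda S)$ then yields $\Pb(S \ge t) \le \exp(-\lambda t + C\lambda^2 v)$ for every $\lambda \in (0,\, c/K_*]$.

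The crux is the constrained minimization of $-\lambda t + C\lambda^2 v$ over $\lambda \in (0,\, c/K_*]$. The unconstrained minimizer is $\lambda^\star = t/(2Cv)$. If $\lambda^\star \le c/K_*$ (equivalently $t \lesssim v/K_*$), I would substitute $\lambda = \lambda^\star$ to obtain the sub-gaussian-type tail $\exp(-t^2/(4Cv))$. Otherwise $\lambda^\star$ exceeds the admissible range, so the objective is decreasing on $(0,\, c/K_*]$ and I would take the boundary value $\lambda = c/K_*$; in this regime $t > 2Cvc/K_*$, so the quadratic term is dominated by the linear term and the exponent collapses to $-c' t/K_*$ for a suitable constant $c'$. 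Combining the two cases gives $\Pb(S \ge t) \le \exp(-c'' \min(t^2/v,\, t/K_*))$. Applying the identical argument to $-S$ and taking a union bound produces the two-sided statement with the factor of $2$, and substituting back $v = \sum_i \norm{X_i}_{\psi_1}^2$ and $K_* = \max_i \norm{X_i}_{\psi_1}$ recovers the claimed form.

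The step I expect to be the main obstacle is the case analysis in this final optimization: one must argue carefully that the boundary case genuinely yields the clean linear term $t/K_*$ rather than a messier expression, and that the two regimes stitch together into a single $\min$ governed by one common absolute constant. Tracking the constants through the MGF bound, the product over coordinates, and the boundary substitution is the only delicate bookkeeping here; the probabilistic content is otherwise entirely standard.
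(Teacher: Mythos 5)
Your proposal is correct: it is the standard Chernoff/MGF proof of Bernstein's inequality for sub-exponential variables (bound the single-variable MGF on $|\lambda| \le c/K$, take the product by independence, apply Markov, and optimize $\lambda$ over the admissible interval with the two-regime case analysis), and your handling of the boundary case is right, since $t > 2Cvc/K_*$ forces the quadratic term to be at most half the linear term. The paper itself states this theorem without proof as a standard result (it is Theorem 2.8.2 of \cite{vershynin2018high}), so there is no in-paper argument to compare against; your write-up fills that gap correctly.
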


\vspace{5pt}
\begin{lemma} [Modified Hoeffding Inequality] \label{lemma:hoeffding_random} 
Let $\bX \in \Rb^n$ be random vector with independent mean-zero sub-Gaussian random coordinates with $\| X_i \|_{\psi_2} \le K$.
Let $\ba \in \Rb^n$ be another random vector that satisfies $\|\ba\|_2 \le b$ almost surely for some constant $b \ge 0$.
Then for all $t \ge 0$, 
\begin{align*}
	\Pb \Big( \Big| \sum_{i=1}^n a_i X_i\Big| \ge t \Big) \le 2 \exp\Big(-\frac{ct^2}{K^2 b^2} \Big),
\end{align*}
where $c > 0$ is a universal constant. 
\end{lemma}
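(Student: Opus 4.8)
The plan is to condition on the vector $\ba$ and reduce the claim to a standard sub-Gaussian tail bound. Since $\ba$ is independent of $\bX$ (as holds in every application, e.g.\ in the proof of Lemma~\ref{lem:thm1.4}, where $\bX = \bvarepsilon$ and $\ba$ is a deterministic function of $\bZ$), I would fix an arbitrary realization $\ba = \boldsymbol{a}$ with $\|\boldsymbol{a}\|_2 \le b$. Conditionally, the coordinates of $\bX$ remain independent, mean zero, and sub-Gaussian with $\|X_i\|_{\psi_2} \le K$, so the conditional sum $S := \sum_{i=1}^n a_i X_i$ is a linear combination of independent mean-zero sub-Gaussian variables.

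First I would control the sub-Gaussian norm of $S$. Using the scaling property $\|a_i X_i\|_{\psi_2} = \abs{a_i}\,\|X_i\|_{\psi_2} \le \abs{a_i} K$ together with Lemma~\ref{lem:ind_sum} for sums of independent sub-Gaussians, I obtain
\begin{align}
\|S\|_{\psi_2}^2 \le C \sum_{i=1}^n \|a_i X_i\|_{\psi_2}^2 \le C K^2 \sum_{i=1}^n a_i^2 = C K^2 \|\boldsymbol{a}\|_2^2 \le C K^2 b^2.
\end{align}

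Next I would convert this into a tail bound. Since $S$ is mean zero with $\|S\|_{\psi_2}^2 \le C K^2 b^2$, Lemma~\ref{lem:MGF_upper} gives $\Ex \exp(\lambda S) \le \exp(C \lambda^2 K^2 b^2)$ for every $\lambda \in \Rb$; a Chernoff argument (Markov applied to $e^{\lambda S}$, optimized over $\lambda > 0$, then symmetrized) yields
\begin{align}
\Pb\Big( \abs{S} \ge t \;\Big|\; \ba = \boldsymbol{a} \Big) \le 2\exp\Big(-\frac{c t^2}{K^2 b^2}\Big)
\end{align}
for an absolute constant $c > 0$.

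Crucially, this conditional bound depends on $\boldsymbol{a}$ only through the almost-sure constraint $\|\boldsymbol{a}\|_2 \le b$, so it holds uniformly over all admissible realizations of $\ba$; integrating over the law of $\ba$ therefore preserves the same bound and gives the claimed unconditional inequality. I expect the main subtlety to be exactly this last step: the argument hinges on $\ba$ being independent of $\bX$, so that conditioning on $\ba$ leaves the independent sub-Gaussian structure of the $X_i$ intact and the conditional constant does not depend on the particular realization. The tail computation itself is routine once the $\psi_2$-norm estimate in the first step is in hand.
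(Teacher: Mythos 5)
Your proposal is correct and follows essentially the same route as the paper: condition on $\ba$, exploit the (implicitly assumed) independence of $\ba$ and $\bX$ to bound the conditional MGF of $\sum_i a_i X_i$ by $\exp(C\lambda^2 K^2 b^2)$ via Lemma~\ref{lem:MGF_upper}, apply Chernoff, and note the bound is uniform over admissible realizations of $\ba$. The only cosmetic difference is that you pass through the $\psi_2$-norm of the sum via Lemma~\ref{lem:ind_sum} while the paper multiplies the per-coordinate MGF bounds directly; your explicit remark that the argument hinges on independence of $\ba$ and $\bX$ (which the lemma statement omits but the paper's proof also uses) is a fair and accurate observation.
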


\begin{proof}
Let $S_n = \sum_{i=1}^n a_i X_i$. Then applying Markov's inequality for any $\lambda > 0$, we obtain
\begin{align*}
	\Pb \left( S_n \ge t \right) &= \Pb \left( \exp(\lambda S_n) \ge \exp(\lambda t) \right)
	\\ & \le \Ex \left[ \exp(\lambda S_n) \right] \cdot \exp(-\lambda t)
	\\ &= \Ex_{\ba} \left[ \Ex\left[ \exp(\lambda S_n) ~|~ \ba\right] \right] \cdot \exp(-\lambda t).
\end{align*}
Now, conditioned on the random vector $\ba$, observe that
\begin{align*}
	\Ex\left[ \exp(\lambda S_n)  \right] = \prod_{i=1}^n \Ex \left[ \exp( \lambda a_i X_i) \right] 
	\le \exp(CK^2 \lambda^2 \| \ba \|_2^2) \le  \exp(CK^2 \lambda^2 b^2),
\end{align*}
where the equality follows from conditional independence, the first inequality by Lemma \ref{lem:MGF_upper}, and the final inequality by assumption. Therefore, 
\begin{align*}
	\Pb \left( S_n \ge t \right) &\le \exp( CK^2 \lambda^2 b^2 - \lambda t). 
\end{align*} 
Optimizing over $\lambda$ yields the desired result:
\begin{align*}
	\Pb \left( S_n \ge t \right) &\le \exp \Big( - \frac{ct^2}{K^2 b^2} \Big). 
\end{align*} 
Applying the same arguments for $- \langle \bX, \ba \rangle$ gives a tail bound in the other direction.
\end{proof}

\vspace{5pt}
% Hanson-Wright
\begin{lemma} [Modified Hanson-Wright Inequality] \label{lemma:hansonwright_random} 
Let $\bX \in \Rb^n$ be a random vector with independent mean-zero sub-Gaussian coordinates with $\|X_i \|_{\psi_2} \le K$. 
Let $\bA \in \Rb^{n \times n}$ be a random matrix satisfying $\|\bA\|_2  \le a$ and $\|\bA\|_F^2 \, \le b$ almost surely for some $a, b \ge 0$.
Then for any $t \ge 0$,
\begin{align*}
	\Pb \left( \abs{ \bX^T \bA \bX - \Ex[\bX^T \bA \bX] } \ge t \right) &\le 2 \cdot \exp \Big( -c \min\Big(\frac{t^2}{K^4 b}, \frac{t}{K^2 a} \Big) \Big). 
\end{align*}
\end{lemma}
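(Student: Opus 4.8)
The plan is to reduce to the deterministic Hanson--Wright inequality (Theorem~\ref{thm:hanson_wright}) by conditioning on the random matrix $\bA$, which we take to be independent of $\bX$ (as is the case in our applications, where $\bX = \bvarepsilon$ and $\bA$ is a function of $\bZ$). First I would fix an arbitrary deterministic matrix $M \in \Rb^{n\times n}$ in the support of $\bA$ and apply Theorem~\ref{thm:hanson_wright} to $\bX$, whose coordinates are independent, mean zero, and sub-gaussian with $L = \max_i \|X_i\|_{\psi_2} \le K$. This yields
\begin{align*}
\Pb\left( \left| \bX^T M \bX - \Ex[\bX^T M \bX] \right| \ge t \right) \le 2 \exp\left( -c \min\left( \frac{t^2}{K^4 \|M\|_F^2}, \frac{t}{K^2 \|M\|_2} \right) \right).
\end{align*}

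The second step is to replace the $M$-dependent exponent by the uniform almost-sure bounds. The map $(\alpha,\beta) \mapsto \min\!\big(t^2/(K^4\alpha),\, t/(K^2\beta)\big)$ is nonincreasing in both arguments, so whenever $\|M\|_F^2 \le b$ and $\|M\|_2 \le a$ we have $\min\!\big(t^2/(K^4\|M\|_F^2),\, t/(K^2\|M\|_2)\big) \ge \min\!\big(t^2/(K^4 b),\, t/(K^2 a)\big)$. Since this pair of bounds holds for every $M$ in the support of $\bA$ by hypothesis, the right-hand side above is at most $2\exp\!\big(-c\min(t^2/(K^4 b),\, t/(K^2 a))\big)$, an expression now free of $M$. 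Finally, using independence, the conditional probability given $\bA = M$ equals the deterministic probability just bounded, so the tower property gives
\begin{align*}
\Pb\left( \left| \bX^T \bA \bX - \Ex[\bX^T \bA \bX \mid \bA] \right| \ge t \right) = \Ex_{\bA}\!\left[ \Pb\left( \left| \bX^T \bA \bX - \Ex[\bX^T\bA\bX \mid \bA]\right| \ge t \;\middle|\; \bA \right) \right] \le 2 \exp\left( -c \min\left( \frac{t^2}{K^4 b}, \frac{t}{K^2 a} \right) \right).
\end{align*}

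The step that deserves care rather than routine calculation is the centering term. The conditioning argument naturally controls the deviation from the \emph{conditional} mean $\Ex[\bX^T\bA\bX\mid\bA] = \sum_i \bA_{ii}\,\Ex[X_i^2]$, whereas the statement is phrased with $\Ex[\bX^T\bA\bX]$; these differ in general because the conditional mean depends on the (random) diagonal of $\bA$. In every application in this paper, however, $\bA$ is a rank-$k$ projection (e.g.\ $\bA = \bhU_k\bhU_k^T$ in Lemma~\ref{lem:thm1.4}) independent of $\bX = \bvarepsilon$, so the conditional mean is bounded uniformly by $C\sigma^2 k$ exactly as in \eqref{eq:thm1.lem3.0}. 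Interpreting $\Ex[\bX^T\bA\bX]$ as this conditional expectation makes the two centerings coincide and the stated inequality exact, which is the only way the lemma is invoked downstream.
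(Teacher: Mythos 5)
Your proof is correct under the (necessary, and in your write-up explicit) assumption that $\bA$ is independent of $\bX$, but it takes a genuinely different route from the paper's. The paper reproves Hanson--Wright from scratch for random $\bA$: it splits $\bX^T\bA\bX$ into diagonal and off-diagonal sums, bounds each conditional MGF given $\bA$ via Markov's inequality, and for the off-diagonal part invokes decoupling and Gaussian comparison (Remark 6.1.3 and Lemmas 6.2.2--6.2.3 of Vershynin), substituting the almost-sure bounds $\|\bA\|_F^2\le b$, $\|\bA\|_2\le a$ inside the exponent before optimizing over $\lambda$. You instead condition on $\bA=M$, apply the already-stated deterministic Theorem~\ref{thm:hanson_wright} verbatim, use monotonicity of the tail bound in $\|M\|_F^2$ and $\|M\|_2$ to make the exponent uniform over the support, and integrate out $\bA$. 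Your version is shorter, avoids repeating the decoupling machinery, and makes the independence hypothesis explicit where the paper leaves it implicit (the paper's decomposition already needs $\Ex[A_{ij}X_iX_j]=0$ for $i\ne j$, and its conditional MGF computations need $\bX$ to remain mean-zero sub-gaussian given $\bA$). Your remark on the centering is also well taken and is not merely cosmetic: the paper's Step 1 bounds $\sum_i A_{ii}(X_i^2-\Ex[X_i^2])$ after conditioning, which is the deviation from the \emph{conditional} mean $\Ex[\bX^T\bA\bX\mid\bA]$, not from $\Ex[\bX^T\bA\bX]$ as the displayed statement reads; so both proofs really establish the conditionally centered inequality, and, as you note, that is exactly the form invoked downstream in Lemma~\ref{lem:thm1.4}, where the conditional mean is bounded uniformly by $C\sigma^2 k$ as in \eqref{eq:thm1.lem3.0}. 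The only thing your argument gives up is self-containedness, since it leans on the black-box deterministic inequality rather than exhibiting the MGF estimates; what it buys is brevity and a cleaner accounting of where randomness in $\bA$ actually enters.
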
 

\begin{proof}
The proof follows similarly to that of Theorem 6.2.1 of \cite{vershynin2018high}. Using the independence of the coordinates of $X$, we have the following useful diagonal and off-diagonal decomposition: 
\begin{align*}
	\bX^T \bA \bX - \Ex[\bX^T \bA \bX] &= \sum_{i=1}^n \left(A_{ii} X_i^2 - \Ex[ A_{ii} X_i^2] \right) + \sum_{i \neq j} A_{ij} X_i X_j. 
\end{align*}
Therefore, letting 
\[ p = \Pb \left( \bX^T \bA X - \Ex[\bX^T \bA \bX]  \ge t \right), \]
we can express 
\[ p \le \Pb \Big( \sum_{i=1}^n \left(A_{ii} X_i^2 - \Ex[ A_{ii} X_i^2] \right) \ge t/2 \Big) + \Pb \Big( \sum_{i \neq j} A_{ij} X_i X_j \ge t/2 \Big) =: p_1 + p_2. \]
We will now proceed to bound each term independently. 

\vspace{10pt} \noindent 
{\em Step 1: diagonal sum. } 
Let $S_n = \sum_{i=1}^n (A_{ii} X_i^2 - \Ex[A_{ii} X_i^2])$. Applying Markov's inequality for any $\lambda > 0$, we have
\begin{align*}
p_1 &= \Pb\left( \exp(\lambda S_n) \ge \exp(\lambda t/2) \right) 
\\ &\le \Ex_{\bA} \Ex\left[ \left[ \exp(\lambda S_n) ~|~ \bA \right] \right] \cdot \exp(-\lambda t/2). 
\end{align*}
Since the $X_i$ are independent, sub-Gaussian random variables, $X_i^2 - \Ex[X_i^2]$ are independent mean-zero sub-exponential random variables, satisfying
\[ 
\norm{ X_i^2 - \Ex[X_i^2] }_{\psi_1} \le C \norm{X_i^2}_{\psi_1} \le C \norm{X_i}_{\psi_2}^2 \le C K^2. 
\]
Conditioned on $\bA$ and optimizing over $\lambda$ using standard arguments, yields
% \begin{align*}
% 	 \Ex \left[ \exp(\lambda S_n) \right] &= \Ex \Big[ \exp \Big( \sum_{i=1}^n \lambda A_{ii} (X_i^2 - \Ex[X_i^2]) \Big) \Big]
% 	 \\ &= \prod_{i=1}^n \Ex\left[ \exp( \lambda A_{ii} (X_i^2 - \Ex[X_i^2])) \right] 
% % 	 \\ &\le \prod_{i=1}^n \exp( C K^4 \lambda^2 A_{ii}^2)
% % 	 \\ &\le \exp(CK^4 \lambda^2 \norm{\bA}_F^2)
% % 	 \\ &\le \exp(CK^4 \lambda^2 b),
% \end{align*}
% where $| \lambda | \, \le c / (a K^2)$. Therefore, o yields
% \[ p_1 \le \exp(CK^4 \lambda^2 b - \lambda t/2) \le \exp \Big( -c\min\Big( \frac{t^2}{K^4 b}, \frac{t}{K^2a} \Big)\Big). \] 
\[ p_1 \le \exp \Big( -c\min\Big( \frac{t^2}{K^4 b}, \frac{t}{K^2a} \Big)\Big). \] 

\noindent 
{\em Step 2: off-diagonals. } Let $S = \sum_{i \neq j} A_{ij} X_i X_j$. Again, applying Markov's inequality for any $\lambda > 0$, we have
\begin{align*}
	p_2 &= \Pb \left( \exp(\lambda S) \ge \exp( \lambda t / 2) \right) \le \Ex_{\bA} \left[ \Ex \left[ \exp(\lambda S) ~|~ \bA \right] \right] \cdot \exp(-\lambda t / 2). 
\end{align*}
Let $\bg$ be a standard multivariate gaussian random vector. Further, let $\bX'$ and $\bg'$ be independent copies of $\bX$ and $\bg$, respectively. Conditioning on $\bA$ yields
\begin{align*}
	\Ex\left[ \exp(\lambda S) \right] &\le \Ex\left[ \exp( 4 \lambda \bX^T \bA \bX' ) \right] \qquad \text{(by Decoupling Remark 6.1.3 of \cite{vershynin2018high})}
	\\ &\le \Ex \left[ \exp( C_1 \lambda \bg^T \bA \bg') \right] \qquad \text{(by Lemma 6.2.3 of \cite{vershynin2018high})}
	\\ &\le \exp(C_2 \lambda^2 \norm{\bA}_F^2)  	\qquad \text{(by Lemma 6.2.2 of \cite{vershynin2018high})}
	\\ &\le \exp(C_2 \lambda^2 b),
\end{align*}
where $|\lambda| \, \le c / a$. Optimizing over $\lambda$ then gives
\[ p_2 \le \exp \Big(-c\min\Big( \frac{t^2}{K^4 b}, \frac{t}{K^2 a} \Big) \Big). \]

\noindent 
{\em Step 3: combining. } Putting everything together completes the proof.  
\end{proof}

% Matrix with independent sub-gaussian rows

\section{Proof of Theorem~\ref{thm:hypo}} \label{sec:proof.hypo_test}
{\em Type I error. } 
We first bound the Type I error, which anchors on Lemma \ref{lemma:type1.1}, stated below. The proof of Lemma \ref{lemma:type1.1} can be found in Appendix \ref{sec:type1_lem}.

\begin{lemma} \label{lemma:type1.1}
	Suppose $H_0$ is true. Then, 
\begin{align} \label{eq:type1.1} 
	\htau &= \|(\bH - \bhH^k) \bhH^{\prime \ell}\|_F^2 + 
	\|(\bI - \bH) (\bhH^{\prime \ell} - \bHp)\|_F^2
	\\ &\quad + 2 \langle (\bH - \bhH^k) \bhH^{\prime \ell}, (\bI - \bH) \bhH^{\prime \ell} \rangle_F.
\end{align} 
\end{lemma} 

We proceed to bound each term on the right-hand side of \eqref{eq:type1.1} independently. 

\vspace{10pt} \noindent 
{\em Bounding $\|(\bH - \bhH^k) \bhH^{\prime \ell}\|_F^2$. } 
By Lemma \ref{lem:thm1.1}, we have w.p. at least $1- \alpha_1$, 
{\small
\begin{align}  
	\| (\bhH^k - \bH) \bhH^{\prime \ell} \|_F^2 
	&\le \| \bhH^k - \bH\|_2^2 ~ \|\bhH^{\prime \ell}\|_F^2
	\le \frac{C \varsigma^2 r' \phi^2(\alpha_1)}{s^2_{r}}.  \label{eq:t.1}
\end{align} 
}
Note that we have used the fact that $\| \bhH^{\prime \ell} \|_F^2 = r'$. 

\vspace{10pt} \noindent 
{\em Bounding $\|(\bI - \bH) (\bhH^{\prime \ell} - \bHp)\|_F^2$. } 
Observe that $(\bI - \bH)$ is a projection matrix, and hence $\|\bI - \bH\| \le 1$. 
%
% Next, we apply Lemma \ref{lem:thm1.1} to obtain w.p. at least $1-\alpha_2$,
By adapting Lemma \ref{lem:thm1.1}, 
%for $\bhH^{\prime \ell}, \bHp$ in place of $\bhV_\pre, \bH$, 
we have w.p. at least $1-\alpha_2$
\begin{align} \label{eq:adapt.1} 
\| \bhH^{\prime \ell} - \bHp\|_F^2 
&\le  r' \| \bhH^{\prime \ell} - \bHp\|_2^2 
\le \frac{C \varsigma^2 r' 
(\phi'(\alpha_2))^2}{(s'_{r'})^2}.
\end{align} 
Note that we have used the following: 
(i) $\| \bhH^{\prime \ell} - \bHp\|_F =
\| \sin \Theta \|_F$, where $\sin \Theta \in \Rb^{r' \times r'}$ is a matrix of principal angles between the two projectors \citep{ABSIL2006288}, which implies $\rank(\bhH^{\prime \ell} - \bHp) \le r'$;
(ii) the standard norm inequality $\| \bA \|_F \le \sqrt{\rank(\bA)} \| \bA \|_2$ for any matrix $\bA$. 
Using the result above, we have 
{\small
\begin{align}
	\| (\bI - \bH) (\bhH^{\prime \ell} - \bHp)\|_F^2 &\le 
	\|\bI - \bH\|_2^2 ~ \| \bhH^{\prime \ell} - \bHp\|_F^2
	\le \frac{C \varsigma^2 r' (\phi'(\alpha_2))^2}{(s'_{r'})^2}.  \label{eq:t.2} 
\end{align} 
}

\vspace{5pt} \noindent 
{\em Bounding $\langle (\bH - \bhH^k) \bhH^{\prime \ell}, ~(\bI - \bH) \bhH^{\prime \ell} \rangle_F$. } 
Using the cyclic property of the trace operator, we have that
{\small
\begin{align}
	 &\langle (\bhH^k - \bH) \bhH^{\prime \ell}, ~ (\bI - \bH) \bhH^{\prime \ell} \rangle_F 
	= \tr \big( (\bhH^{\prime \ell})^T (\bhH^k - \bH)  (\bI - \bH) \bhH^{\prime \ell} \big)
	\\&= \tr \big( (\bhH^k - \bH)  (\bI - \bH) \bhH^{\prime \ell} \big). \label{eq:helper_type_2_trace}
\end{align} 
}
Note that $\bhH^k - \bH$ is symmetric, and $\bI - \bH$ and $\bhH^{\prime \ell}$ are both symmetric positive semidefinite (PSD). As a result, 
Lemmas \ref{lem:thm1.1} and \ref{lemma:trace.2} yield w.p. at least $1-\alpha_1$ 
\begin{align}
	&\tr \big( (\bhH^k - \bH)  (\bI - \bH) \bhH^{\prime \ell} \big)
	\le \| \bhH^k - \bH \|_2  \tr \big( (\bI - \bH) \bhH^{\prime \ell} \big) 
	\\ &\le \| \bhH^k - \bH \|_2  \| \bI - \bH \|_2  \tr \big( \bhH^{\prime \ell} \big) 
	\le \frac{C \varsigma r' \phi(\alpha_1)}{s_{r}}.
	\label{eq:t.3} 
\end{align} 
Again, to arrive at the above inequality, we use $\| \bI - \bH\|_2 \le 1$ and $\text{tr}(\bhH^{\prime \ell}) = r'$. 

\vspace{10pt} \noindent {\em Collecting terms. } 
Collecting \eqref{eq:t.1}, \eqref{eq:t.2}, and \eqref{eq:t.3} with $\alpha_1 = \alpha_2 = \alpha/2$,  w.p. at least $1-\alpha$, 
\begin{align}
	\htau &\le 
	\frac{C \varsigma^2 r' \phi^2(\alpha/2)}{s^2_{r}}
	+
	\frac{C \varsigma^2 r' (\phi'(\alpha/2))^2}{(s'_{r'})^2}
	+ 
	\frac{C \varsigma r' \phi(\alpha/2)}{s_{r}}. 
\end{align}
Defining the upper bound as $\tau(\alpha)$ completes the bound on the Type I error. 

%*********
% TYPE II 
%*********
\vspace{10pt} \noindent
{\em Type II error. }
Next, we bound the Type II error. 
We will leverage Lemma \ref{lemma:type2}, the proof of which can be found in Appendix \ref{sec:type2_lem}. 

\begin{lemma} \label{lemma:type2}
The following equality holds: $\htau = r' - c_1 - c_2$, where 
{\small
\begin{align} 
    c_1 &= \|\bH \bH^T \bHp\|_F^2
    \\ c_2 &= \|(\bhH^k - \bH) \bhH^{\prime \ell}\|_F^2
    + \|\bH (\bhH^{\prime \ell} - \bHp)\|_F^2
    \\ &\quad + 2 \langle (\bhH^k - \bH) \bhH^{\prime \ell}, \bH \bhH^{\prime \ell} \rangle_F
    + 2 \langle \bH (\bhH^{\prime \ell} - \bHp),  \bH \bHp \rangle_F. \label{eq:type2.1} 
\end{align} 
}
\end{lemma} 
We proceed to bound each term on the right hand side of \eqref{eq:type2.1} separately. 

% term 1
\vspace{10pt} \noindent 
{\em Bounding $\|(\bhH^k - \bH) \bhH^{\prime \ell}\|_F^2$. } 
From \eqref{eq:t.1}, we have that w.p. at least $1-\alpha_1$, 
\begin{align}  \label{eq:t.5}
	\|(\bhH^k - \bH) \bhH^{\prime \ell} \|_F^2 
	&\le \frac{C \varsigma^2 r' \phi^2(\alpha_1)}{s^2_{r}}. 
\end{align} 

% term 2
\vspace{5pt} \noindent 
{\em Bounding $\|\bH (\bhH^{\prime \ell} - \bHp)\|_F^2$. } 
Using the inequality $\|\bA \bB \|_F \le \|\bA \| \|\bB \|_F$ for any two matrices $\bA$ and $\bB$, as well as the bound in  \eqref{eq:adapt.1}, we have w.p. at least $1-\alpha_2$, 
\begin{align}  \label{eq:t.6}
	\| \bH (\bhH^{\prime \ell} - \bHp) \|_F^2 
	&\le \frac{C \varsigma^2 r' (\phi'(\alpha_2))^2}{(s'_{r'})^2}.
\end{align}

% term 3
\vspace{10pt} \noindent 
{\em Bounding $\langle (\bhH^k - \bH) \bhH^{\prime \ell}, \bH \bhH^{\prime \ell} \rangle_F$. } 
Using an identical argument used to create the bounds in \eqref{eq:helper_type_2_trace} and \eqref{eq:t.3}, but replacing $\bI - \bH$ with $\bH$, we obtain w.p. at least $1-\alpha_1$
\begin{align} \label{eq:t.7}
	 \langle (\bhH^k - \bH) \bhH^{\prime \ell}, \bH \bhH^{\prime \ell} \rangle_F 
	& \le  \frac{C \varsigma r' \phi(\alpha_1)}{s_{r}}.
\end{align}

% term 4
\vspace{5pt} \noindent 
{\em Bounding $\langle \bH (\bhH^{\prime \ell} - \bHp),  \bH \bHp \rangle_F$. }
Like in the argument to produce the bound in \eqref{eq:t.3}, we use Lemmas \ref{lem:thm1.1} and \ref{lemma:trace.2} to get that w.p. at least $1-\alpha_2$, 
{\small
\begin{align}
&\langle \bH (\bhH^{\prime \ell} - \bHp), \bH \bHp \rangle_F
 = \tr \big( (\bhH^{\prime \ell} - \bHp) \bH^2 \bHp \big) 
\\&= \tr \big( (\bhH^{\prime \ell} - \bHp) \bH \bHp \big)
\le \| \bhH^{\prime \ell} - \bHp \|_2 ~ \|\bH\|_2  \tr \big( \bHp \big) 
\le \frac{C \varsigma r' \phi'(\alpha_2)}{s'_{r'}}. \label{eq:t.8} 
\end{align} 
}

% combining
\noindent {\em Collecting terms. }
Combining \eqref{eq:t.5}, \eqref{eq:t.6}, \eqref{eq:t.7}, \eqref{eq:t.8} with $\alpha_1 = \alpha_2 = \alpha/2$, and using the definition of $\tau(\alpha)$, we have that w.p. at least $1-\alpha$,
% \begin{align}\label{eq:c_prime_bound}
% 	c_2 &\le \tau(\alpha) + \frac{C \varsigma r' \phi'(\alpha/2)}{s'_{r'}}.
% \end{align} 
$$
	c_2 \le \tau(\alpha) + \frac{C \varsigma r' \phi'(\alpha/2)}{s'_{r'}}.
$$
Hence, along with using Lemma \ref{lemma:type2}, it follows that w.p. at least $1-\alpha$,
\begin{align}\label{eq:unonditional_type_2_bound}
	\htau &\ge r' - c_1 - \tau(\alpha) - \frac{C \varsigma r' \phi'(\alpha/2)}{s'_{r'}}. 
\end{align} 
Now, suppose $r'$ satisfies \eqref{eq:type2_cond}, which implies that $H_1$ must hold. 
Then, \eqref{eq:unonditional_type_2_bound} and \eqref{eq:type2_cond} together imply  $\Pb(\htau > \tau(\alpha) | H_1) \ge 1 - \alpha$. 
This completes the proof.

\subsection{Proof of Corollary \ref{cor:hypo}}
\begin{lemma} [Gaussian Matrices: Theorem 7.3.1 of \cite{vershynin2018high}] \label{lemma:subg_matrix_gaussian}
Let $\bA$ be a $m \times n$ random matrix where the entries $A_{ij}$ are Gaussian r.v.s with variance $\varsigma^2$. 
Then for any $t > 0$,  
$
\| \bA \|_2 \le \varsigma (\sqrt{m} + \sqrt{n} + t)
$
w.p. at least $1-2\exp(-t^2)$. 
\end{lemma} 

\begin{lemma}\label{lem:wedin_gaussian}
Let the setup of Lemma \ref{lem:thm1.1} hold.
Further, assume the entries of $\bW$ and $\bWp$ are independent Gaussian r.v.s with variance $\varsigma^2$. 
Then for any $\alpha \in (0,1)$, we have w.p. at least $1-\alpha$, 
\begin{align}
    \| \bhH^k - \bH \|_2 
    \le \frac{2 \varsigma \phi(\alpha)}{s_{r}},
    ~~
      \| \bhH^{\prime \ell} - \bHp \|_2 
    \le \frac{2 \varsigma \phi_\epost(\alpha)}{s'_{r'}}. 
\end{align}
    %w.p. at least $1-\alpha$. 
\end{lemma}

\begin{proof}
The proof is identical to that of Lemma \ref{lem:thm1.1} except $\| \bZ - \bX \|$ is now bounded above using Lemma \ref{lemma:subg_matrix_gaussian}. 
\end{proof}
The remainder of the proof of Corollary \ref{cor:hypo} is identical to that of Theorem \ref{thm:hypo}.

% TYPE I LEMMA
\subsection{Proof of Lemma \ref{lemma:type1.1}} \label{sec:type1_lem}

Observe that
\begin{align} 
	\htau &= \|(\bI - \bhH^k) \bhH^{\prime \ell}\|_F^2 
	\\ &= \|(\bI - \bhH^k) \bhH^{\prime \ell} - (\bI - \bH) \bhH^{\prime \ell} + (\bI - \bH) \bhH^{\prime \ell}\|_F^2
	\\ & = \|(\bH - \bhH^k) \bhH^{\prime \ell} + (\bI - \bH) \bhH^{\prime \ell}\|_F^2
	\\ & = \|(\bH - \bhH^k) \bhH^{\prime \ell}\|_F^2 + \|(\bI - \bH) \bhH^{\prime \ell}\|_F^2
	 + 2 \langle (\bH - \bhH^k) \bhH^{\prime \ell}, (\bI - \bH) \bhH^{\prime \ell} \rangle_F. \label{eq:type1.0.new}
\end{align} 
Under $H_0$, it follows that $(\bI - \bH) \bHp = 0$. 
As a result, 
\begin{align}
	 \|(\bI - \bH) \bhH^{\prime \ell}\|_F^2 &=  \|(\bI - \bH) \bhH^{\prime \ell}\|_F^2
	 \\ & = \|(\bI - \bH) \bhH^{\prime \ell} - (\bI - \bH) \bHp \|_F^2
	 \\ & = \|(\bI - \bH) (\bhH^{\prime \ell} - \bHp)\|_F^2. 
\end{align} 
Applying these two sets of equalities above together completes the proof. 
 
% TYPE II
\subsection{Proof of Lemma \ref{lemma:type2}} \label{sec:type2_lem}
Because the columns of $\bhH^{\prime \ell}$ are orthonormal, 
$
	r' = \|\bhH^{\prime \ell}\|_F^2 = \| \bhH^k \bhH^{\prime \ell}\|_F^2 + \|(\bI - \bhH^k) \bhH^{\prime \ell}\|_F^2. 
$
Therefore, it follows that
\begin{align}
	\htau &= \|(\bI - \bhH^k) \bhH^{\prime \ell}\|_F^2  
	= r' - \| \bhH^k \bhH^{\prime \ell}\|_F^2. \label{eq:type2.0_new}
\end{align}
Now, consider the second term of the equality above.  
\begin{align} \label{eq.1} 
	\|\bhH^k \bhH^{\prime \ell}\|_F^2 &= %
	\|\bhH^k \bhH^{\prime \ell} - \bH \bhH^{\prime \ell} + \bH \bhH^{\prime \ell}\|_F^2
	\\ &= \| (\bhH^k - \bH) \bhH^{\prime \ell}\|_F^2 + \|\bH \bhH^{\prime \ell}\|_F^2
	+ 2 \langle (\bhH^k - \bH) \bhH^{\prime \ell}, \bH \bhH^{\prime \ell} \rangle_F.
	\label{eq:type2.1_new} 
\end{align} 
Further, analyzing the second term of \eqref{eq:type2.1_new}, we note that 
\begin{align}
	\|\bH \bhH^{\prime \ell}\|_F^2 
	&= \|\bH \bhH^{\prime \ell}\|_F^2
	\\ & = \| \bH \bhH^{\prime \ell} - \bH \bHp + \bH \bHp\|_F^2
	\\ &= \| \bH (\bhH^{\prime \ell} - \bHp)\|_F^2 + \|\bH \bHp\|_F^2 
	+ 2 \langle \bH (\bhH^{\prime \ell} - \bHp),  \bH \bHp\rangle_F.
	\label{eq:type2.2_new} 
\end{align}
Incorporating \eqref{eq:type2.1_new} and \eqref{eq:type2.2_new} into \eqref{eq:type2.0_new}, and recalling $c_1 = \|\bH \bHp\|_F^2 = \|\bH \bH^T \bHp\|_F^2 $ completes the proof.

\subsection{Helper Lemmas}

% helper lemma
\begin{lemma} \label{lemma:trace.1} 
Let $\bA, \bB \in \Rb^{n \times n} $ be symmetric PSD matrices. Then, $\tr(\bA \bB) \ge 0$. 
\end{lemma} 

\begin{proof}
Let $\bB^{1/2}$ denote the square root of $\bB$. Since $\bA \succeq 0$, we have
\begin{align}
	\tr(\bA \bB) &= \tr(\bA \bB^{1/2} \bB^{1/2})
	\\&= \tr(\bB^{1/2} \bA \bB^{1/2}) %
	\\& = \sum_{i=1}^n (\bB^{1/2} \be_i)^\prime \bA  (\bB^{1/2} \be_i) \ge 0.
\end{align} 
\end{proof} 

\begin{lemma} \label{lemma:trace.2} 
If $\bA \in \Rb^{n \times n}$ is a symmetric matrix and $\bB \in \Rb^{n \times n}$ is a symmetric PSD matrix, then
$
	\tr(\bA \bB) \le \lambda_{\max}(\bA) \cdot \tr(\bB),
$
where $\lambda_{\max}(\bA)$ is the top eigenvalue of $\bA$. 
\end{lemma} 

\begin{proof}
Since $\bA$ is symmetric, it follows that  
$\lambda_{\max}(\bA) \bI - \bA \succeq 0$. 
As a result, applying Lemma \ref{lemma:trace.1} yields
$
	\tr((\lambda_{\max}(\bA) \bI - \bA) \bB) = \lambda_{\max}(\bA) \cdot \tr(\bB) - \tr(\bA \bB) \ge 0. 
$
%This completes the proof. 
\end{proof}
\section{Towards a Lower Bound on Model Identification} \label{sec:lower_bound}
We now take a first step towards establishing a lower bound on PCR's parameter estimation error in Lemma~\ref{lemma:lower_bound} below. 
Recall that Theorem~\ref{thm:param_est} implies that PCR faithfully recovers the model parameter $\btbeta^*$ provided $\snr$ grows sufficiently fast.
Conversely, if $\snr = O(1)$, then Lemma~\ref{lemma:lower_bound} suggests the parameter estimation error is lower bounded by an absolute constant. 
To establish our result, we show that the Gaussian location model problem \citep{YihongWu} is an instance of error-in-variables regression.
\begin{lemma}\label{lemma:lower_bound}
Let $n = O(p)$ and $\snr = O(1)$.
Then,
\begin{align} \label{eq:lower_bound_param_est}
\inf_{\bhbeta} \sup_{\btbeta^* \in \mathbb{B}_2} \Ex\| \bhbeta - \btbeta^* \|^2_2 = \Omega(1),
\end{align}
where $\mathbb{B}_2 = \{\bv \in \Rb^p : \| \bv \|_2 \le 1 \}$.
\end{lemma}

We make several important remarks.
First and foremost, our result stated in Lemma~\ref{lemma:lower_bound} is only a partial correspondence with that stated in Theorem~\ref{thm:param_est}. 
The minimax bound in Lemma~\ref{lemma:lower_bound} is stated with $\rho = 1$, i.e., it does not capture the refined dependence on $\rho$. 
Meanwhile, \eqref{eq:snr} and \eqref{eq:thm1.main} suggest that the error decays as $\rho^{-4}$. 
While this dependency on $\rho$ may not be optimal, similar dependencies have appeared in error bounds within the error-in-variables literature, e.g., \cite{loh_wainwright} and references therein. 
Establishing the optimal dependence with respect to $\rho$ is interesting future work.

Moreover, Lemma~\ref{lemma:lower_bound} considers the constraint set $\mathbb{B}_2$, which contrasts with that considered in the main body of this work. 
Finally, as seen in the proof below, our reduction argument utilizes a specific choice of $\bX$ while the main body of this work considers a fixed design matrix that the practitioner is unable to choose. 
Closing the gap on these limitations would significantly enhance the current lower bound, and we leave a formal treatment of this problem as important future work.

\subsection{Proof of Lemma~\ref{lemma:lower_bound}} \label{sec:proof_lower_bound}
Broadly, we proceed in three steps:
(i) stating the Gaussian location model (GLM) and an associated minimax result; 
(ii) reducing GLM to an instance of error-in-variables regression; 
(iii) establishing a minimax result on the parameter estimation error of error-in-variables using the GLM minimax result. 

{\em Gaussian location model.}
Below, we introduce the GLM setting through a well-known minimax result. 
\begin{lemma}[Theorem 12.4 of \cite{YihongWu}]\label{lemma:GLM_minimax}
Let $\btheta \sim \Nc(\btheta^*, \sigma^2\bI_p)$, where $\bI_p \in \Rb^{p \times p}$ is the identity matrix and $\btheta, \btheta^* \in \Rb^p$.
Given $\btheta$, let $\bbhtheta$ be any estimator of $\btheta^*$.
Then,
\begin{align}
    \inf_{\bbhtheta} \sup_{\btheta \in \mathbb{B}_2} \Ex \| \bbhtheta - \btheta^* \|_2^2 = \Theta( \sigma^2p \wedge 1). 
\end{align}
\end{lemma}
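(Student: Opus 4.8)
The statement is the classical minimax risk of the bounded normal mean (estimating $\btheta^* \in \mathbb{B}_2$ from $\btheta \sim \Nc(\btheta^*, \sigma^2 \bI_p)$), so the plan is to prove matching upper and lower bounds, both of order $\sigma^2 p \wedge 1$. The two bounds are genuinely separate arguments, and the lower bound is the substantive one.

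For the upper bound I would exhibit two estimators and take whichever is better for the regime at hand. The identity estimator $\bbhtheta = \btheta$ has risk $\Ex\|\btheta - \btheta^*\|_2^2 = \sigma^2 p$ uniformly in $\btheta^*$, since $\btheta - \btheta^* \sim \Nc(\bzero, \sigma^2 \bI_p)$; the trivial estimator $\bbhtheta = \bzero$ has risk $\|\btheta^*\|_2^2 \le 1$ for every $\btheta^* \in \mathbb{B}_2$. Hence the minimax risk is at most $\sigma^2 p \wedge 1$, giving the $O(\cdot)$ half. (A shrinkage estimator would interpolate smoothly, but is unnecessary for the rate.)

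For the lower bound I would invoke Assouad's lemma with a rescaled hypercube prior. Index a finite family by $\omega \in \{-1, +1\}^p$ and set $\btheta^*(\omega) = a\, \omega$ for a level $a > 0$ to be chosen, so that under hypothesis $\omega$ we observe $p$ independent coordinates with $\theta_j \sim \Nc(a \omega_j, \sigma^2)$. Two hypotheses differing in a single coordinate are separated by $4a^2$ per flipped coordinate, and Assouad's lemma then lower bounds the minimax risk by a constant times $p a^2 \cdot \min_{\mathrm{ham}(\omega, \omega') = 1}\big(1 - \|P_\omega - P_{\omega'}\|_{\mathrm{TV}}\big)$.

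The crux, and where I expect the only real difficulty, is choosing $a$ to balance three competing demands. First, every hypothesis must lie in the constraint set: $\|\btheta^*(\omega)\|_2^2 = p a^2 \le 1$ forces $a \le 1/\sqrt{p}$. Second, neighboring hypotheses must stay statistically indistinguishable so the $(1 - \mathrm{TV})$ factor is bounded below; since a single-coordinate flip only alters one $\Nc(\pm a, \sigma^2)$ marginal, Pinsker's inequality gives $\|P_\omega - P_{\omega'}\|_{\mathrm{TV}} \le \sqrt{\tfrac12 \mathrm{KL}} = a/\sigma$, so requiring $a \le \sigma/2$ keeps this factor $\ge 1/2$. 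Third, we want $a$ as large as possible to maximize the separation $p a^2$. The choice $a = \min(\sigma/2,\, 1/\sqrt{p})$ satisfies all three simultaneously and yields a lower bound of order $p a^2 = \min(\sigma^2 p / 4,\, 1) = \Omega(\sigma^2 p \wedge 1)$, matching the upper bound. I anticipate the only delicate points are bookkeeping the absolute constant in Assouad's lemma and confirming the per-coordinate $\mathrm{TV}$/$\mathrm{KL}$ estimate. As a cross-check, the equivalent route through a Gaussian conjugate prior $\Nc(\bzero, \tau^2 \bI_p)$ with $\tau^2 = \min(\sigma^2, 1/p)$, whose exact Bayes risk is $p \tau^2 \sigma^2 / (\tau^2 + \sigma^2)$, gives the same rate, at the cost of having to control the prior mass escaping $\mathbb{B}_2$.
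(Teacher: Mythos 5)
The paper never proves this lemma at all: it is imported wholesale as Theorem 12.4 of the cited lecture notes, and the paper's own work begins only downstream, in the reduction of the Gaussian location model to error-in-variables regression (Appendix, proof of Theorem \ref{thm:lower_bound}). So there is no paper proof to compare against; what you have written is a correct, self-contained derivation of the classical bounded-normal-mean minimax rate, and it is a sensible thing to supply. Your upper bound is exactly right (the identity estimator gives risk $\sigma^2 p$ uniformly, the zero estimator gives risk $\|\btheta^*\|_2^2 \le 1$ on $\mathbb{B}_2$, take the better of the two), and your Assouad argument checks out numerically: $\mathrm{KL}\bigl(\Nc(a,\sigma^2)\,\|\,\Nc(-a,\sigma^2)\bigr) = 2a^2/\sigma^2$, so Pinsker gives $\|P_\omega - P_{\omega'}\|_{\mathrm{TV}} \le a/\sigma$ for a single-coordinate flip; the choice $a = \min(\sigma/2, 1/\sqrt{p})$ keeps all $2^p$ hypotheses inside $\mathbb{B}_2$ and keeps neighbors indistinguishable, yielding $\Omega(pa^2) = \Omega(\sigma^2 p \wedge 1)$. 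Two small points you should make explicit if this were written out in full. First, squared $\ell_2$ loss does not satisfy the triangle-inequality hypothesis in some statements of Assouad's lemma, but since the loss decomposes coordinatewise you can use the per-coordinate testing reduction $(\hat{\theta}_j - a\omega_j)^2 \ge a^2\, \Ind\{\mathrm{sign}(\hat{\theta}_j) \ne \omega_j\}$, which is precisely the form you need. Second, the supremum in the paper's statement is written over $\btheta \in \mathbb{B}_2$ but is of course over $\btheta^* \in \mathbb{B}_2$; you silently and correctly fixed this. Compared with the bare citation, your route costs a page but makes the two regimes transparent (identity estimator optimal when $\sigma^2 p \le 1$, zero estimator optimal otherwise) and would make the paper's lower-bound appendix self-contained; your Gaussian-prior cross-check is the other standard route and, as you note, requires controlling the prior mass escaping $\mathbb{B}_2$, which is why the hypercube argument is the cleaner primary proof here.
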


{\em Reducing GLM to error-in-variables.} 
We will now show how an instance of GLM can be  reduced to an instance of error-in-variables.
Towards this, we follow the setup of Lemma \ref{lemma:GLM_minimax} and define $\bbeta^* = \btheta^*$, $\bbeta = \btheta$, and $s = 1/\sigma$.
For convenience, we write $\bbeta = \bbeta^* + \bEta$, where the entries of $\bEta$ are independent Gaussian r.v.s with mean zero and variance $1/s^2$; 
hence $\bbeta \sim \Nc(\bbeta^*, (1/s^2) \bI_p)$. 
Now, recall that the error-in-variables setting reveals a response vector
$y = \bX \bbeta^* + \bvarepsilon$ and covariate $\bZ = \bX + \bW$, where the parameter estimation objective is to recover $\bbeta^*$ from $(\by, \bZ)$. 
%; the goal is to use the observations $y$ and $\bZ$ to recover $\bbeta^*$.
%
Below, we construct instances of these quantities using $\bbeta, \bbeta^*$ as follows: 
\begin{itemize}
    \item [(i)] Let the SVD of $\bX$ be defined as $\bX = s \bu \otimes \bv$, where $\bu = (1, 0, \dots, 0)^T \in \Rb^n$ and $\bv = \bbeta^*$. Note by construction, $\rank(\bX) = 1$ and $\bbeta^* \in \text{rowspan}(\bX)$. 
    %, $\|\bbeta^*\|_2 = \Omega(1)$, and $\| \bbeta^* \|_1 = O(\sqrt{p})$. We also highlight that $\rank(\bX) = 1$.
    %such that Assumption \ref{assumption:low_rank} is satisfied with $r=1$. 
    
    \item [(ii)] To construct $\by$, we first sample $\bvarepsilon \in \Rb^n$ whose entries are independent standard normal r.v.s. 
    Next, we define $\by = s \bu + \bvarepsilon$. 
    From (i), we note that $\bX \bbeta^* = s\bu$ such that $\by$ can be equivalently expressed as $\by = \bX \bbeta^* + \bvarepsilon$. 
    %Finally, we remark that $\bvarepsilon$ satisfies Assumption \ref{assumption:response_noise}. 
    
    \item [(iii)] 
    Let $\bZ = s \bu \otimes \bbeta$. By construction, it follows that $\bZ = \bX + s \bu \otimes \bEta$.
    Note that $\bW = s \bu \otimes \bEta$ is an $n \times p$ matrix whose entries in the first row are independent standard normal r.v.s and the remaining entries are zero. 
    % To construct $\bZ$, we first define $\bZ_0 = s u \otimes \bbeta$. By construction, it follows that $\bZ_0 = \bX + s u \otimes \eta$.
    % Note that $\bW_0 = s u \otimes \eta$ is an $n \times p$ matrix whose entries in the first row are independent standard normal r.v.s and the remaining entries are zero. 
    % Next, we define $\bW_1 \in \Rb^{n \times p}$ as a matrix whose entries in the first row are zero and the remaining entries are standard normal r.v.s. 
    % Together, we define $\bZ = \bZ_0 + \bW_1 = \bX + \bW_0 + \bW_1$. 
    % We highlight that the covariate noise $\bW = \bW_0 + \bW_1$ satisfies Assumption \ref{assumption:cov_noise} with $K, \gamma = \Theta(1)$ and $\rho=1$. 
\end{itemize}

{\em Establishing minimax result.} 
%Recall that the error-in-variables parameter estimation task is to recover $\bbeta^*$; more specifically, 
As stated above, the error-in-variables parameter estimation task is to construct $\bhbeta$ from $(\by, \bZ)$ such that $\| \bhbeta - \bbeta^*\|_2$ vanishes as $n, p$ grow. 
Using the above reduction combined with Lemma \ref{lemma:GLM_minimax}, it follows that 
\begin{align}
    \inf_{\bhbeta} \sup_{\bbeta^* \in \mathbb{B}_2} \Ex\| \bhbeta - \bbeta^* \|_2^2 = \Theta( p/s^2 \wedge 1). 
\end{align}
To attain our desired result, it suffices to establish that $p/s^2 = \Omega(1)$. 
%\ge c$ for some constant $c > 0$. 
%
By \eqref{eq:snr} and under the assumption $n = O(p)$, we have that $s^2 \le 2 \snr^2 (n + p) \le c \snr^2 p$ for some $c > 0$.
As such, if $\snr = O(1)$, then the minimax error is bounded below by a constant.

\vskip 0.2in
\bibliography{bibliography}

\end{document}